\newtheorem{Lemma}{Lemma}[section]
\newtheorem{Proposition}[Lemma]{Proposition}
\theoremstyle{definition}
\newtheorem{Theorem}[Lemma]{Theorem}
\newtheorem{Definition}[Lemma]{Definition}
\newtheorem{Fact}[Lemma]{Fact}
\newtheorem{Remark}[Lemma]{Remark}
\newtheorem{Corollary}[Lemma]{Corollary}
\newtheorem{Question}{Question}
\newtheorem{Example}[Lemma]{Example}
\newtheorem{Figure}{Figure}
\title{A generalization of the Kobayashi-Oshima uniformly bounded multiplicity theorem
}
\author{Taito Tauchi\thanks{Institute 
of Mathematics for Industry,
Kyushu University,
Nishi-ku,
Fukuoka,
819-0395,
Japan,
E-mail adress: tauchi.taito.342@m.kyushu-ac.jp}
\footnote{This research is partially supported by 
Grant-inAid for JSPS Fellows (20J00101),
Japan Society for the Promotion of Science (JSPS).}}
\date{}
\begin{document}
\maketitle
\begin{abstract}
Let $P$ be a minimal parabolic subgroup of a
real reductive Lie group $G$ 
and $H$ a closed subgroup of $G$.
Then
it is proved by T. Kobayashi and T. Oshima
that
the regular representation $C^{\infty}(G/H)$
contains each irreducible representation of $G$ 
at most finitely many times
if 
the number of $H$-orbits on $G/P$ is finite.
Moreover,
they also proved that
the multiplicities are uniformly bounded 
if
the number of $H_{\mathbb C}$-orbits on $G_{\mathbb C}/B$
is finite,
where $G_{\mathbb C}, H_{\mathbb C}$ are complexifications of $G, H$, respectively,
and $B$ is a Borel subgroup of $G_{\mathbb C}$.
In this article,
we prove
that
the multiplicities of the representations of $G$ induced from a parabolic subgroup $Q$
in the regular representation on $G/H$ 
are uniformly bounded
if the number of $H_{\mathbb C}$-orbits on $G_{\mathbb C}/Q_{\mathbb C}$ is finite.
For the proof of this claim, we also show the uniform boundedness of the dimensions of the spaces of group invariant hyperfunctions
using the theory of holonomic ${\mathcal D}_{X}$-modules.
\end{abstract}
{\bf Keywords}: orbit decomposition, spherical variety, intertwining operator, multiplicity, holonomic system, hyperfunction.\\
{\bf MSC2020;} primary 22E46; secondary 22E45, 53C30.

\section{Introduction}
Let $G$ be a real reductive Lie group,
$H$ a closed subgroup,
and $Q$ a parabolic subgroup of $G$.
In this article,
we prove
that
the multiplicities of the representations of $G$ induced from $Q$
in the regular representation on $G/H$ 
are uniformly bounded
if the number of $H_{\mathbb C}$-orbits on $G_{\mathbb C}/Q_{\mathbb C}$ is finite,
where
$G_{\mathbb C}, H_{\mathbb C}$, and $Q_{\mathbb C}$ are complexifications of $G, H$, and $Q$, respectively.
For the proof of this claim, we also show the uniform boundedness of the dimensions of the spaces of group invariant hyperfunctions
using the theory of holonomic ${\mathcal D}_{X}$-modules.
We explain the motivation of this in the following subsections.
The main results are stated in 
Section \ref{Main-Theorems}.

\subsection{The Finite Multiplicity Theorem}
\label{FMT}
Let $G$ be a real reductive algebraic group 
and 
$H$ a real algebraic subgroup of $G$.
T. Kobayashi and T. Oshima established 
a finiteness criterion for multiplicities of the regular representation on 
the homogeneous space $G/H$.
\begin{Fact}[{\cite[Thm.\:A]{KO}}]
\label{KO}
Suppose that $G$ and $H$ are defined algebraically over ${\mathbb R}$.
Then 
the following two conditions on the pair ($G, H$) are equivalent:
\begin{enumerate}[(i)]
\item
$\dim {\rm Hom}_{G}
(\pi,C^{\infty}(G/H,\tau))<\infty$
for any $(\pi,\tau)\in \hat{G}_{{\rm smooth}}\times\hat{H}_{{\rm f}}$,
\item
$G/H$ is real spherical.
\end{enumerate}
\end{Fact}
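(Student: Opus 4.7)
The plan is to prove the substantive direction (ii) $\Rightarrow$ (i) by reducing the multiplicity count to a dimension count for $H$-equivariant hyperfunctions on the real flag variety $G/P$, and to dispatch (i) $\Rightarrow$ (ii) by contraposition using an orbit-construction argument.

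First I would reduce to the case of a principal series. By Casselman's subrepresentation theorem, every $\pi \in \hat{G}_{\mathrm{smooth}}$ embeds infinitesimally into a minimal principal series $\mathrm{Ind}_P^G(\sigma)$ for some finite-dimensional representation $\sigma$ of a minimal parabolic $P$, giving
$$\dim\mathrm{Hom}_G(\pi, C^{\infty}(G/H,\tau)) \le \dim\mathrm{Hom}_G(\mathrm{Ind}_P^G(\sigma), C^{\infty}(G/H,\tau)).$$
A Frobenius-reciprocity / Schwartz-kernel argument then identifies the right-hand side with the space of $(H,\tau)$-equivariant hyperfunction sections of the vector bundle on $G/P$ associated to $\sigma^{*}$, so the problem becomes one of bounding the dimension of a space of equivariant hyperfunctions on a real flag variety.

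Next I would exploit sphericality. The hypothesis (ii) is equivalent, by the real analogue of the Brion--Vinberg criterion (due to Kimelfeld and Matsuki), to finiteness of the number of $H$-orbits on $G/P$. Filter the sheaf of $\tau$-twisted hyperfunctions on $G/P$ by the orbit stratification; on each stratum, the $H$-equivariant sections are controlled by a local system with values in a finite-dimensional space depending on $\sigma$, $\tau$, and the isotropy at a base point, while extensions across boundary strata are controlled by normal jets. The main obstacle is bounding the contribution from each orbit: one must ensure that the $\mathcal{D}$-module on $G/P$ governing $H$-equivariant sections is holonomic, so that its local cohomology along each orbit is finite-dimensional. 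This is exactly where the holonomic $\mathcal{D}_{X}$-module theory flagged in the abstract enters. Summing the (finitely many, finite) contributions then yields finite multiplicity.

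For the converse (i) $\Rightarrow$ (ii), I would argue by contraposition. If $G/H$ fails to be real spherical, there are infinitely many $H$-orbits on $G/P$. Choose a principal series parameter $\sigma$ generic enough that each orbit supports a nonzero $H$-equivariant boundary-value distribution; this produces infinitely many linearly independent elements in $\mathrm{Hom}_G(\mathrm{Ind}_P^G(\sigma), C^{\infty}(G/H))$, contradicting (i). The delicate point is to verify genericity; the standard remedy is to shift $\sigma$ within a continuous family and to exploit the fact that the support of a nonzero equivariant distribution is $H$-invariant and closed, which allows one to peel off orbits one by one.
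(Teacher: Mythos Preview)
The paper does not prove this statement. It is labeled a \emph{Fact} and attributed to \cite[Thm.\,A]{KO}; no argument for it appears here. What the paper \emph{does} contain (Section~\ref{APB}) is an alternative proof of the implication (II)$\Rightarrow$(I) of Fact~\ref{KOB}, i.e.\ Theorem~B of \cite{KO}, the \emph{uniformly bounded} version. That argument passes through the complex orbit decomposition $H_{\mathbb C}\backslash G_{\mathbb C}/B$, Borel--Weil, and Theorem~\ref{Solfin-analytic}; it is not a proof of Fact~\ref{KO}. So there is no ``paper's own proof'' of this particular statement to compare your proposal against.

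That said, two substantive remarks on the proposal itself. For (ii)$\Rightarrow$(i) your outline is in the right spirit, but you conflate two different hypotheses. The holonomic ${\mathcal D}$-module machinery flagged in the abstract is used in this paper under finiteness of the \emph{complex} orbit set (sphericality of $G_{\mathbb C}/H_{\mathbb C}$), which is strictly stronger than real sphericality; see Lemma~\ref{constructM}, where holonomicity of ${\mathfrak M}_{\tau}$ is derived from $\#(H_{\mathbb C}\backslash X)<\infty$. Real sphericality alone does not automatically give you a holonomic system on the complexification, so if you want to run the argument under (ii) you must either work with real-analytic ${\mathcal D}$-modules and justify holonomicity from $\#(H\backslash G/P)<\infty$, or replace this step by the normal-derivative/boundary-value estimate actually used in \cite{KO}.

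Your sketch of (i)$\Rightarrow$(ii) is a genuine gap. ``Choose $\sigma$ generic enough that each orbit supports a nonzero $H$-equivariant boundary-value distribution'' is not an argument: infinitely many $H$-orbits on $G/P$ do not by themselves produce infinitely many linearly independent intertwiners for a \emph{single} principal series, and peeling off orbits by support does not manufacture existence. The mechanism used in the literature is different: see Fact~\ref{K} in this paper (from \cite{Kobshintani}), which produces $\pi\in\hat G^{Q}_{\rm smooth}$ with infinite multiplicity via an integral-geometric/Poisson-transform construction when there is no open orbit, not via orbit-supported distributions.
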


\begin{Remark}
\label{CV1}
	In \cite{KO}, an explicit upper bound of the dimensions in (i) of Fact \ref{FMT}
	was also given.
\end{Remark}

Here,
$\hat{G}_{{\rm smooth}}$
denotes the set of equivalence classes of 
irreducible smooth admissible Fr\'echet representations of $G$ with moderate growth,
and $\hat{H}_{{\rm f}}$ that of irreducible finite-dimensional representations of $H$. 
Given $\tau\in\hat{H}_{{\rm f}}$, 
we write $C^{\infty}(G/H,\tau)$ for the Fr\'echet space of smooth sections of the $G\mathchar`-$homogeneous vector bundle over $G/H$ associated to $\tau$.
The terminology {\it real sphericity} was introduced by Kobayashi \cite{RS} in his study of a broader framework for global analysis on homogeneous spaces than the usual (e.g., reductive symmetric spaces).

\begin{Definition}
A homogeneous space $G/H$ is {\it real spherical} if a minimal parabolic subgroup $P$ of $G$ has an open orbit on $G/H$.
\end{Definition}

The following is one of
the characterizations of real spherical homogeneous spaces.
This is a consequence of
the rank one reduction of T. Matsuki \cite{Matsuki}
and
the classification of real spherical homogeneous spaces of real rank one by B. Kimelfeld 
\cite{Kimelfeld}.
\begin{Fact}[{\cite{Bien}}]
\label{KM}
For the pair
$(G,H)$,
the following two conditions are equivalent:
\begin{enumerate}[(i)]
\setcounter{enumi}{1} 
\item
$G/H$ is real spherical, (i.e., there exists an open $P$-orbit on $G/H$),
\item
$\#(H\backslash G/P)<\infty$.
\end{enumerate}
\end{Fact}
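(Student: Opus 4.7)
The plan is to establish the equivalence in two stages: the direction (iii) $\Rightarrow$ (ii) is an immediate dimension count, while (ii) $\Rightarrow$ (iii) is the substantive direction and proceeds by induction on the real rank of $G$.

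For (iii) $\Rightarrow$ (ii), I would use that $G/P$ is a compact real analytic manifold (since $P$ is a minimal parabolic subgroup of the reductive group $G$) and that each $H$-orbit on $G/P$ is a locally closed real analytic submanifold. A finite disjoint decomposition of $G/P$ into such submanifolds forces at least one of them to have dimension equal to $\dim G/P$, hence to be open. The existence of an open $H$-orbit on $G/P$ is equivalent, by swapping sides of the double coset $H\backslash G/P$, to the existence of an open $P$-orbit on $G/H$, which is exactly the definition of real sphericity.

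For (ii) $\Rightarrow$ (iii), I would induct on the real rank of $G$, invoking Matsuki's rank-one reduction \cite{Matsuki}. This reduction describes the orbit decomposition $H\backslash G/P$ via wall-crossing operations indexed by the simple restricted roots; each such operation relates the orbit combinatorics on $G/P$ to that on a rank-one subquotient $G_\alpha \supset H_\alpha$, and real sphericity is preserved under passage to the subquotients. The base case of real rank one is then handled by Kimelfeld's classification \cite{Kimelfeld}, which enumerates all real rank-one spherical pairs $(L, M)$; for each pair on the list one may check directly that $\#(M\backslash L/P_L)<\infty$ for a minimal parabolic $P_L$ of $L$. Running the induction back up to the full group then yields finiteness of $H\backslash G/P$.

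The principal obstacle is the combinatorial bookkeeping in the rank-one reduction: one must verify that real sphericity is faithfully transferred to each rank-one subquotient, and that finiteness of orbits on all such subquotients controls finiteness on $G/P$ (as opposed to merely bounding orbits of maximal dimension). The base-case verification via the Kimelfeld list is essentially a finite case-by-case check and should be mechanical once the reduction is set up.
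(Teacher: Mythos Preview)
Your proposal is correct and matches the paper's own treatment: the paper does not give an independent proof of this Fact but simply attributes it to the rank-one reduction of Matsuki \cite{Matsuki} combined with Kimelfeld's classification \cite{Kimelfeld}, citing \cite{Bien} for the assembled statement. Your outline of (iii) $\Rightarrow$ (ii) by a dimension/Baire argument and of (ii) $\Rightarrow$ (iii) via Matsuki's reduction with Kimelfeld's rank-one base case is exactly this route.
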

Therefore, for a minimal parabolic subgroup $P$, the three conditions (i), (ii), and (iii) are equivalent by Facts \ref{KO} and \ref{KM} (see Figure \ref{fig1} below).  
Thus 
we ask a question what will happen to the relationship among the three conditions, if we replace $P$ by a general parabolic subgroup $Q$ of $G$.
There is an obvious extension of the
conditions (ii) and (iii)
to a general parabolic subgroup $Q$
(see Definition \ref{DefQ} below).
In order to formulate a variant of (i) for a parabolic subgroup $Q$ of $G$,
we review the notion of $Q$-series.
\begin{Definition}[{\cite[Def.\:6.6]{Kobshintani}}]
\label{DefQ}
Let $\pi \in \hat{G}_{\rm smooth}$.
We say that $\pi$
belongs to $Q${\it -series} if $\pi$ occurs as a subquotient of the degenerate principal series representation $C^{\infty}(G/Q,\tau)$ for some $\tau \in \hat {Q}_{\rm f}$.
\end{Definition}

\vspace{0.2cm}
\begin{minipage}{0.47\hsize}
\begin{Figure}
\label{fig1}
$
P:\text{minimal parabolic}$
\begin{center}
$
\xymatrix{
& (\rm{i}) \ar@{<=>}[ld]_{{\rm Fact} \ref{KO}} \ar@{<=>}[rd] & \\
(\rm{ii}) \ar@{<=>}[rr]_{{\rm Fact} \ref{KM}}& & (\rm{iii})
}
$
\end{center}
\end{Figure}
\end{minipage}
\begin{minipage}{0.47\hsize}
\begin{Figure}
\label{fig2}
$Q:\text{general parabolic}$
\begin{center}
$
\xymatrix{
& ({\rm i}_{Q}) \ar@<0.1cm>@{=>}[ld] & \\
({\rm ii}_{Q}) \ar@<0.1cm>@{=>}[ru]|{\times}^{{\rm Example}\:\ref{examinfdim}}
\ar@<0.1cm>@{<=}[rr]& & ({\rm iii}_{Q}) \ar@<0.1cm>@{<=}[ll]|{\times}^{{\rm Example}\:\ref{HGQinfty}}
}
$
\end{center}
\end{Figure}
\end{minipage}
\vspace{0.2cm}

For a parabolic subgroup $Q$ of $G$,
we set
$$\hat{G}_{{\rm smooth}}^{Q}:=\{\pi\in\hat{G}_{\rm smooth}\mid \pi\:\text{belongs to $Q$-series}\}.$$
Obviously, $\hat{G}_{{\rm smooth}}^{Q}\supset \hat{G}_{{\rm smooth}}^{Q'}$ if $Q\subset Q'$.
Moreover,
$\hat{G}^{Q}_{\rm smooth}$ is equal to $\hat{G}_{\rm smooth}$ if $Q=P$ (minimal parabolic) by Harish-Chandra's subquotient theorem \cite{subquo} and to $\hat{G}_{{\rm f}}$ if $Q=G$.
\begin{Definition}
\label{DefQ}
For a parabolic subgroup $Q$ of $G$, we define three conditions (i$_{Q}$), (ii$_{Q}$), and (iii$_{Q}$) as follows:
\begin{enumerate}[(i$_{Q}$)]
\item
$\dim {\rm Hom}_{G}
(\pi,C^{\infty}(G/H,\tau))<\infty$
for all $(\pi,\tau)\in \hat{G}^{Q}_{{\rm smooth}}\times\hat{H}_{{\rm alg}}$,
\item
$Q$ has an open orbit on $G/H$,
\item
$\#(H\backslash G/Q)<\infty$.
\end{enumerate}
\end{Definition}
The conditions (i$_{Q}$), (ii$_{Q}$), and (iii$_{Q}$) reduce to (i), (ii), and (iii), respectively, if $Q=P$ (minimal parabolic), and we have seen in Facts \ref{KO} and \ref{KM} that the following equivalences hold for $Q=P$,
\begin{align*}
({\rm i}_{P}) \iff ({\rm ii}_{P}) \iff ({\rm iii}_{P}).\\
\intertext{Furthermore,
if $Q=G$,
the condition (i$_{Q}$) automatically holds by the Frobenius reciprocity, while
(ii$_{Q}$) and (iii$_{Q}$) are obvious.  Hence}
({\rm i}_{G}) \iff ({\rm ii}_{G}) \iff ({\rm iii}_{G}).
\end{align*}
For a general parabolic subgroup $Q$, clearly, (iii$_{Q}$) implies (ii$_{Q}$).
However 
there is an easy counterexample for 
the converse statement.
\begin{Example}
\label{HGQinfty}
The real projective space ${\mathbb R}{\mathbb P}^{2}=SL(3,{\mathbb R})/Q=G/Q$
splits into an open orbit and continuously many fixed points of the
unipotent radical $H$ of the opposite parabolic subgroup $\overline{Q}$ of $Q$.
\end{Example}

On the other hand, the implication (i$_{Q}$) $\Rightarrow$ (ii$_{Q}$) holds. 
To see this,
we define a subset $\hat{H}_{\rm f}(G)$ of $\hat{H}_{\rm f}$ by
$$
\hat{H}_{\rm f}(G):=
\{
\tau\in \hat{H}_{\rm f}\mid
\tau 
\text{ appears as a quotient of some element of }
\hat{G}_{\rm f} 
\}.
$$
The implication (i$_{Q}$) $\Rightarrow$ (ii$_{Q}$) is derived 
from the following stronger assertion:
\begin{Fact}[{\cite[Cor.\:6.8]{Kobshintani}}]
\label{K}
If
$Q$ does not have an open orbit on $G/H$,
then,
for any $\tau \in \hat{H}_{\rm f}(G)$, there exists $\pi \in \hat{G}^{Q}_{\rm smooth}$ such that $\dim{\rm Hom}_{G}(\pi, C^{\infty}(G/H,\tau))=\infty$.
\end{Fact}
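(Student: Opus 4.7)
The plan is to realize the required $\pi$ as an irreducible subquotient of a suitably chosen degenerate principal series $\Pi_\sigma:=C^\infty(G/Q,\sigma)$ for some $\sigma\in\hat{Q}_{\rm f}$, and to show that
\[
\dim{\rm Hom}_{G}\bigl(\Pi_\sigma,\,C^\infty(G/H,\tau)\bigr)=\infty.
\]
Since Casselman's subrepresentation theorem makes $\Pi_\sigma$ of finite length, left-exactness of ${\rm Hom}_{G}\bigl(-,C^\infty(G/H,\tau)\bigr)$ on a composition series forces at least one irreducible subquotient $\pi$ to inherit the infinite multiplicity, and such $\pi$ lies in $\hat{G}^{Q}_{{\rm smooth}}$ by Definition \ref{DefQ}.

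To reduce to the scalar target $\tau=\mathbb{C}$, I would use the hypothesis $\tau\in\hat{H}_{\rm f}(G)$: pick $V\in\hat{G}_{\rm f}$ with an $H$-equivariant injection $\tau\hookrightarrow V^{*}|_{H}$. The natural $G$-isomorphism $C^\infty(G/H,V^{*})\cong V^{*}\otimes C^\infty(G/H)$ together with tensor-Hom adjunction converts the $\tau$-valued problem into a scalar one about ${\rm Hom}_{G}(V\otimes\Pi_\sigma,C^\infty(G/H))$, and by the tensor identity $V\otimes\Pi_\sigma\cong\Pi_{V|_{Q}\otimes\sigma}$ this is again a Hom space out of a degenerate principal series of finite length. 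So it suffices to produce, for some finite-dimensional $Q$-module $\sigma'$, an infinite-dimensional space of intertwiners from $\Pi_{\sigma'}$ into the scalar regular representation $C^\infty(G/H)$.

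For this scalar problem I would invoke the Bruhat/Frobenius reciprocity identification
\[
{\rm Hom}_{G}\bigl(C^\infty(G/Q,\sigma'),\,C^\infty(G/H)\bigr)\;\cong\;\mathcal{B}(G/Q,(\sigma')^{*})^{H},
\]
the space of $H$-invariant hyperfunction sections of the homogeneous vector bundle $G\times_{Q}(\sigma')^{*}$ over the compact flag variety $G/Q$. Because $Q$ has no open orbit on $G/H$, every $H$-orbit $\mathcal{O}\subset G/Q$ has positive codimension, so after localizing along any such $\mathcal{O}$ one can build $H$-invariants as transversal jets of a delta section: the conormal bundle $N^{*}\mathcal{O}$ is $H$-equivariant of positive rank, and symmetric powers ${\rm Sym}^{k}(N^{*}\mathcal{O})$ supply an infinite sequence of candidate invariants, provided $\sigma'$ is chosen to absorb the corresponding isotropy twist.

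The technical crux is precisely this isotropy matching. Forming a $k$-th order transversal jet along $\mathcal{O}$ yields a genuine $H$-equivariant hyperfunction only when the $Q$-representation $\sigma'$ restricts on each stabilizer $Q\cap g^{-1}Hg$ (for $gQ\in\mathcal{O}$) to the natural character on ${\rm Sym}^{k}(N^{*}_{gQ}\mathcal{O})$. The flexibility gained in the second step---replacing $\sigma$ by $\sigma\otimes V|_{Q}$ for any finite-dimensional $V\in\hat{G}_{\rm f}$---is exactly what allows the stabilizer character on $H$-orbits to be realized as the restriction of a $Q$-representation for arbitrarily large $k$, exploiting that $\tau\in\hat{H}_{\rm f}(G)$ forces a compatible $G$-module on the other side. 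Verifying that such $\sigma'$ exists (or equivalently, that the jet spaces give nonvanishing $H$-invariants on each transversal order) is the main obstacle; once it is overcome, distinct values of $k$ give linearly independent intertwiners and the proof concludes.
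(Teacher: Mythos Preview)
This statement is quoted in the paper as a \emph{Fact} with a citation to \cite[Cor.\:6.8]{Kobshintani}; the paper gives no proof of its own. There is therefore nothing in the present paper to compare your proposal against.

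That said, your sketch is not a complete proof, and you say so yourself: the ``technical crux'' of matching the isotropy characters so that the successive transversal jets along an $H$-orbit in $G/Q$ actually produce nonzero $H$-invariants in some fixed bundle $(\sigma')^{*}$ is left open. This is a real obstacle, not a formality. For a fixed $\sigma'$, the jet filtration on invariant hyperfunctions supported on an orbit $\mathcal O$ has graded pieces isomorphic to $H$-invariants in ${\rm Sym}^{k}(N_{\mathcal O})\otimes(\sigma')^{*}$ (twisted by the appropriate half-density), and there is no a priori reason these are nonzero for infinitely many $k$; the argument you give for manufacturing $\sigma'$ using $V|_{Q}$ does not show this. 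Also, a minor point: finite length of $C^\infty(G/Q,\sigma)$ is Harish-Chandra admissibility, not Casselman's subrepresentation theorem.

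For orientation, the argument in the cited reference proceeds on the \emph{other} side of the double coset picture: one works on $G/H$ rather than on $G/Q$. If $Q$ has no open orbit on $G/H$, then on any $Q$-orbit the ring of $Q$-semi-invariant smooth functions is infinite-dimensional (there are continuous moduli), and multiplication by such functions followed by the Poisson transform $C^\infty(G/Q,\sigma)\to C^\infty(G/H,\tau)$ produces an infinite-dimensional family of intertwining operators for a suitable $\sigma$. This avoids the jet-matching problem entirely.
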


This fact implies that 
(i$_{Q}$)$\Rightarrow$(ii$_{Q}$) is true.
However,
(ii$_{Q}$)$\Rightarrow$(i$_{Q}$)
is not always true for a general parabolic subgroup $Q$.

\begin{Example}
\label{examinfdim}
$\dim {\rm Hom}_{G} (C^{\infty}(G/Q),C^{\infty}(G/H))=\infty$ 
for the pair $(G,H,Q)$ in Example \ref{HGQinfty}.
\end{Example}

We summarize the known relationship among the three conditions in Figure \ref{fig2},
which shows that
the relation between the conditions (i$_{Q}$) and (iii$_{Q}$) is unsettled.

\begin{Question}
Determine the relationship between the following two conditions:
\begin{enumerate}[(i$_{Q}$)]
\item
$\dim {\rm Hom}_{G}
(\pi,C^{\infty}(G/H,\tau))<\infty$
for all $(\pi,\tau)\in \hat{G}^{Q}_{{\rm smooth}}\times\hat{H}_{{\rm alg}}$,

\setcounter{enumi}{2}
\item
$\#(H\backslash G/Q)<\infty$.
\end{enumerate}
\end{Question}
For this question,
we proved 
that
there exists the pair 
$(G,H,Q)$
satisfying
the condition (iii$_{Q}$),
although it does NOT satisfy the condition 
(i$_{Q}$)
in \cite[Thm.\:1.8]{Tauchi-QH},
and
proved
that
(i$_{Q}$)
$\Rightarrow$
(iii$_{Q}$)
holds
under a certain condition of orientation 
in
\cite[Thm.\:2.4]{Tauchi-PJA}.
Figure \ref{fig3} given below
summarises
the relationship among
the conditions
(i$_{Q}$),
(ii$_{Q}$)
and
(iii$_{Q}$).
In this figure,
$\Delta$ on the arrow of
(i$_{Q}$)
$\Rightarrow$
(iii$_{Q}$)
means that
this is
proved only under the additional assumption.
\begin{Figure} 
\label{fig3}
\begin{center}
$Q:\text{general parabolic}$
\end{center}
\samepage
\begin{center}
$
\xymatrix{
& ({\rm i}_{Q}) \ar@<0.1cm>@{=>}[ld] 
\ar@<0.1cm>@{<=}[rd]|{\times}^{\cite{Tauchi-QH}} 
\ar@<-0.1cm>@{=>}|{\Delta}[rd]_{\cite{Tauchi-PJA}}& \\
({\rm ii}_{Q}) \ar@<0.1cm>@{=>}[ru]|{\times}^{\text{Example} \ref{examinfdim}}
\ar@<0.1cm>@{<=}[rr]& & ({\rm iii}_{Q}) \ar@<0.1cm>@{<=}[ll]|{\times}^{\text{Example} \ref{HGQinfty}}
}
$

\end{center}
\end{Figure}

\subsection{Uniformly Bounded Multiplicity}
By Fact \ref{KO},
the finiteness of the number of $P$-orbits on $G/H$
guarantees
that of multiplicities in the regular representation on $G/H$.
Kobayashi and
Oshima
also proved 
the criterion for uniformly boundedness 
of multiplicities
in the regular representation.
In this article,
we say that 
a complex Lie group
$L_{\mathbb C}$
is a complexification of a Lie group $L$
if
$L_{\mathbb C}$ contains $L$ as a closed subgroup
and
${\mathfrak l}_{\mathbb C}=
{\mathfrak l}\oplus \sqrt{-1}{\mathfrak l}$,
where ${\mathfrak l}_{\mathbb C}$
is the Lie algebra of $L_{\mathbb C}$.

\begin{Fact}[{\cite[Theorem B]{KO}}]
\label{KOB}
The following two conditions on the pair
$(G,H)$
are equivalent:
\begin{enumerate}[(I)]
\item
$\displaystyle
\sup_{\tau \in \hat{H}_{\rm f}}
\sup_{\pi\in\hat{G}_{{\rm smooth}}}
\frac{1}{\dim \tau}
\dim {\rm Hom}_{G}
(\pi,C^{\infty}(G/H,\tau))<\infty
$,
\item
$G_{\mathbb C}/H_{\mathbb C}$
is spherical.
\end{enumerate}
\end{Fact}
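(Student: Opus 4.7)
I would prove the equivalence of $(I)$ and $(II)$ by treating the two directions separately, with the theory of holonomic $\mathcal{D}$-modules on the complexification $G_{\mathbb C}/H_{\mathbb C}$ as the common engine.

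For $(II) \Rightarrow (I)$, the first move is to reduce the multiplicity to a space of equivariant hyperfunctions. Harish-Chandra's subquotient theorem embeds any $\pi \in \hat{G}_{\rm smooth}$ into a minimal principal series $C^{\infty}(G/P,\sigma)$, and Casselman--Wallach globalization yields
\[
\dim \mathrm{Hom}_{G}(\pi, C^{\infty}(G/H,\tau)) \leq \dim \mathrm{Hom}_{G}\bigl(C^{\infty}(G/P,\sigma), C^{\infty}(G/H,\tau)\bigr).
\]
A Frobenius-type reciprocity identifies the right-hand side with a space of $(\sigma^{\vee} \boxtimes \tau)$-equivariant hyperfunctions on $G$, i.e.\ sections of an equivariant sheaf on $P \backslash G / H$. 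The second move is to complexify: this sheaf arises as the boundary values of a regular holonomic $\mathcal{D}_{G_{\mathbb C}/H_{\mathbb C}}$-module $\mathcal{M}_{\sigma,\tau}$ whose characteristic variety is contained in the union of conormal bundles to $B$-orbits on $G_{\mathbb C}/H_{\mathbb C}$. Under $(II)$, Brion--Vinberg give only finitely many such orbits, so the characteristic cycle has a fixed finite list of components, and Kashiwara's estimate on hyperfunction solutions of holonomic systems produces a finite upper bound. The key point for uniformity is that the list of components is dictated by the orbit structure alone (hence independent of $\sigma$), while the multiplicity along each component grows at most linearly in $\dim \tau$ (the rank of the local system determined by $\tau$). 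After dividing by $\dim \tau$, all dependence on the inducing data disappears, yielding $(I)$.

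For $(I) \Rightarrow (II)$, I would argue by contrapositive. If $G_{\mathbb C}/H_{\mathbb C}$ is not spherical, then by Brion--Vinberg the $B$-orbits form a family with positive-dimensional moduli, and one constructs a continuous family of equivariant hyperfunctions supported on the closures of moving orbits. Choosing $\tau$ trivial and varying $\sigma$, this produces a sequence $\{\pi_n\} \subset \hat{G}_{\rm smooth}$ along which $\dim \mathrm{Hom}_{G}(\pi_n, C^{\infty}(G/H))$ diverges, contradicting the boundedness in $(I)$.

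The principal obstacle is establishing the uniformity in the first direction: the characteristic cycle of $\mathcal{M}_{\sigma,\tau}$ must be controlled uniformly in $(\sigma,\tau)$, with multiplicities linear in $\dim \tau$ and independent of $\sigma$. This is precisely the ``uniform boundedness of the dimensions of the spaces of group invariant hyperfunctions'' advertised in the abstract, and it is the step where the sphericity of $G_{\mathbb C}/H_{\mathbb C}$ (rather than merely real sphericity) is indispensable: only the complex-algebraic orbit structure produces the finite list of holonomic components that the $\mathcal{D}$-module estimate requires.
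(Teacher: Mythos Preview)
Your broad architecture for $(II)\Rightarrow(I)$ matches the paper's Section~6: pass through a minimal principal series, convert the intertwining space into a space of equivariant hyperfunctions, and bound the latter via holonomic $\mathcal{D}$-module estimates on the complexification. But there is a real gap at the point you yourself flag as the principal obstacle.

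You assert that the multiplicity of $\mathcal{M}_{\sigma,\tau}$ along each conormal component is ``linear in $\dim\tau$ and independent of $\sigma$.'' The $\mathcal{D}$-module machinery does not give this for free. If one localizes the $\mathfrak{p}_{\mathbb C}$-action with coefficient $\sigma$ on $G_{\mathbb C}/H_{\mathbb C}$, the multiplicity estimate (the paper's Lemma~\ref{multVtC}) yields a bound of the form $(\dim\sigma)\cdot C_\alpha$, and $\dim\sigma$ is unbounded over $\hat{P}_{\rm f}$. Your observation that the \emph{list} of components is fixed by the orbit structure is correct but addresses only the support of the characteristic cycle, not its coefficients.

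The paper closes this gap with a step you omit: a Borel--Weil reduction. One realizes each $\eta\in\hat{P}_{0,{\rm f}}$ inside $C^{\infty}(P_{0}/TAN,\mathbb{C}_{\lambda})^{\mathfrak{n}_{\mathfrak{m}}}$, trading the high-dimensional $\eta$ for a one-dimensional character $\mathbb{C}_{\lambda}$ at the cost of replacing $G/P$ by $G/N$. After this, the invariant space embeds in
\[
\bigl(\mathcal{B}_{G/N}(U)\otimes\mathbb{C}_{\lambda}\otimes\tau\bigr)^{\mathfrak{h}\oplus(\mathfrak{n}_{\mathfrak{m}}+\mathfrak{a}+\mathfrak{t})},
\]
and now the finite-dimensional datum fed into Theorem~\ref{Solfin-analytic} has dimension exactly $\dim\tau$, independent of $\lambda$. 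This is also where the finiteness of $B$-orbits, rather than merely $P_{\mathbb C}$-orbits, is genuinely used: one needs $\#\bigl((H_{\mathbb C}\times A_{\mathbb C}T_{\mathbb C}N_{\mathfrak m})\backslash G_{\mathbb C}/N_{\mathbb C}\bigr)<\infty$.

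One smaller slip: embedding $\pi$ into $C^{\infty}(G/P,\sigma)$ gives the Hom inequality in the wrong direction. You need $\pi$ as a \emph{quotient} of a principal series; the paper obtains this by applying Casselman's subrepresentation theorem to $\pi^{\vee}$ and dualizing via the invariant pairing.

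Finally, the paper does not prove $(I)\Rightarrow(II)$ at all; it is quoted from \cite{KO}. Your contrapositive sketch via ``continuous families of equivariant hyperfunctions'' is not how \cite{KO} argues and would need substantial work to make rigorous.
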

\begin{Remark}
\label{CV2}
	In \cite{KO},
	an explicit upper bound of (I)
	of Fact \ref{KOB} was also given,
	which is optimal in the case that $H$ is 
	the maximal unipotent subgroup $N$ of $G$.
\end{Remark}

Here we say that
a homogeneous space
$G_{\mathbb C}/H_{\mathbb C}$
is spherical 
if
a Borel subgroup of $G_{\mathbb C}$
has an open orbit on
$G_{\mathbb C}/H_{\mathbb C}$.
It is well known
that
the condition (II)
in Fact \ref{KOB}
is characterized by the finiteness of the number of $B$-orbits on $G_{\mathbb C}/H_{\mathbb C}$. 
\begin{Fact}[\cite{Brion,Matsuki,Vinberg}]
\label{BMV}
The condition (II) in
Fact \ref{KOB}
is equivalent to the following condition:
\begin{enumerate}[(I)]
\setcounter{enumi}{2}
\item
$\#(B\backslash G_{\mathbb C}/H_{\mathbb C})<\infty$.
\end{enumerate}
\end{Fact}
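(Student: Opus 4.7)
The plan is to establish the two implications separately. The easy direction $(\mathrm{III}) \Rightarrow (\mathrm{II})$ is immediate from a dimension count: each $B$-orbit on the irreducible complex algebraic variety $X := G_{\mathbb C}/H_{\mathbb C}$ is locally closed, since $B$ is a connected algebraic group acting algebraically. A finite decomposition of $X$ into $B$-orbits therefore forces at least one orbit to have dimension $\dim X$ and, being locally closed of full dimension, to be Zariski open.

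For the nontrivial direction $(\mathrm{II}) \Rightarrow (\mathrm{III})$, I would adopt Matsuki's rank-one reduction. For each simple root $\alpha$, let $P_\alpha \supset B$ be the associated minimal standard parabolic, so that $P_\alpha/B \cong \mathbb{P}^1$. The key combinatorial lemma to verify is that for any $B$-orbit $\mathcal{O} \subset X$ the saturation $P_\alpha \cdot \mathcal{O}$ decomposes into at most three $B$-orbits, reflecting the possible $B$-orbit types on the $\mathbb{P}^1$-fibre. Since $B$ together with the family $\{P_\alpha\}_{\alpha \in \Delta}$ generates $G_{\mathbb C}$, every $B$-orbit on $X$ is reached from the open orbit guaranteed by $(\mathrm{II})$ by a finite chain of such rank-one saturations; an induction on the Weyl-group length of the connecting element then bounds the total number of $B$-orbits.

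An alternative, more algebraic route after Brion and Vinberg uses the complexity $c(X)$, defined as the transcendence degree of the field $\mathbb{C}(X)^B$ of $B$-invariant rational functions. Sphericity is equivalent to $c(X) = 0$. By a Rosenlicht-type argument, $c(X)=0$ forces every $B$-stable irreducible subvariety $Y \subset X$ to inherit $c(Y) = 0$, so that $B$ acts on $Y$ with an open orbit. A Noetherian induction on $B$-stable closed subvarieties then yields the finiteness asserted in $(\mathrm{III})$.

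The main obstacle, common to both routes, is precisely the propagation of the open-orbit property from $X$ itself to its $B$-stable subvarieties: sphericity as stated is a condition on $X$ alone and does not obviously descend. Matsuki's approach handles this via the explicit local analysis of the $P_\alpha$-saturations on $\mathbb{P}^1$, while the Brion--Vinberg approach repackages it into the hereditary behaviour of $\mathbb{C}(X)^B$ under restriction to $B$-stable subvarieties. In either case, nontrivial structural input from the theory of complex reductive groups is indispensable beyond the bare hypothesis of an open $B$-orbit.
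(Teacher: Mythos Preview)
The paper does not prove this statement at all: it is recorded as a \emph{Fact} with citations to Brion, Matsuki, and Vinberg, and no argument is given in the text. So there is no ``paper's own proof'' to compare against; the authors simply invoke the result from the literature.

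Your outline is a fair summary of what actually happens in those references. The easy direction is exactly as you say. For the hard direction, the two routes you sketch are precisely the two standard ones: the rank-one reduction via minimal parabolics $P_\alpha \supset B$ (Matsuki), where the key local input is that $P_\alpha \cdot \mathcal{O}$ splits into at most three $B$-orbits over the $\mathbb{P}^1$-fibre, and the complexity argument (Brion, Vinberg), where one shows that $c(X)=0$ is inherited by every $B$-stable irreducible closed subvariety and then runs Noetherian induction. You also correctly locate the genuine content: in both approaches the work lies in propagating the open-orbit condition from $X$ to its $B$-stable subvarieties, which is not automatic from the bare hypothesis.

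One small caution on the Matsuki sketch: the phrase ``every $B$-orbit is reached from the open orbit by a finite chain of such rank-one saturations'' is a bit loose as stated. What the argument actually gives is a weak order on the set of $B$-orbits generated by the $P_\alpha$-saturations, together with the fact that each saturation step produces only finitely many new orbits and strictly decreases dimension for the non-open ones; finiteness then follows by induction on dimension rather than by literally connecting every orbit to the open one by a chain. This is a presentational point, not a real gap.
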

Therefore,
for a Borel subgroup $B$,
the three conditions
(I),
(II),
and
(II)
are equivalent by
Facts
\ref{KOB}
and
\ref{BMV}
just like the case of Figure \ref{fig1}
(see Figure \ref{fig4} given below).
This equivalence can be interpreted that
the $B$-orbit decomposition of $G_{\mathbb C}/H_{\mathbb C}$
has some information about 
uniformly boundedness of the 
multiplicities in the regular representation on $G/H$.
\begin{Figure}
\label{fig4}
\begin{center}
$B:\text{Borel subgroup}$
\end{center}
\samepage
\begin{center}
$
\xymatrix{
& (\rm{I}) \ar@{<=>}[ld]_{{\rm Fact} \ref{KOB}} \ar@{<=>}[rd] & \\
(\rm{II}) \ar@{<=>}[rr]_{{\rm Fact} \ref{BMV}}& & (\rm{III})
}
$
\end{center}
\end{Figure}

On the other hand,
in Section \ref{FMT},
we consider the relationship between
the $Q$-orbit decomposition of $G/H$
and
the multiplicities of $Q$-series representations
in the regular representation on $G/H$
motivated by the equivalence in Figure \ref{fig1}.
Thus,
Figure \ref{fig4}
makes us think about the following question:
\begin{Question}
\label{Q2}
Determine the relationship between
the $Q_{\mathbb C}$-orbit decomposition
of $G_{\mathbb C}/H_{\mathbb C}$
and
the uniform boundedness of
$Q$-series representations
in regular representation on $G/H$.
\end{Question}

\subsection{Main Theorems}
\label{Main-Theorems}
In this article,
we prove a variant of Fact \ref{KOB}
for a (not necessary minimal)
parabolic subgroup $Q$
as an answer of Question \ref{Q2}.
\begin{Theorem}
\label{Qfin}
Let
$Q$ be a parabolic subgroup of a real reductive Lie group $G$,
and
$H$ a closed subgroup of $G$.
We write $G_{\mathbb C}$,
$H_{\mathbb C}$
and
$Q_{\mathbb C}$
for complexifications
of 
$G,H$ and $Q$,
respectively.
If
the number of connected components of $H_{\mathbb C}$ is finite and
$\#(H_{\mathbb C}\backslash G_{\mathbb C}/Q_{\mathbb C})<\infty$,
then we have
\begin{equation}
\sup_{(\eta,\tau)\in\hat{Q}_{\rm f}\times\hat{H}_{\rm f}}
\frac{1}{\dim \eta\cdot \dim\tau}
\dim{\rm Hom}_{G}(C^{\infty}(G/Q,\eta),
C^{\infty}(G/H,\tau))<\infty.
\label{/dimdim}
\end{equation}
\end{Theorem}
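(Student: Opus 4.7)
My plan is to reduce the computation of the Hom space to a problem about $H$-invariant hyperfunctions on the real flag variety $G/Q$, and then to control those hyperfunctions uniformly using the theory of regular holonomic $\mathcal{D}_X$-modules on the complex flag variety $X := G_{\mathbb C}/Q_{\mathbb C}$. As a first step I would apply a Frobenius-type reciprocity for smooth parabolic induction (in the style used by Kobayashi--Oshima \cite{KO}) to rewrite
\[
\mathrm{Hom}_G\bigl(C^{\infty}(G/Q,\eta),\,C^{\infty}(G/H,\tau)\bigr)
\]
as the space of $H$-invariant hyperfunction sections of a $G$-homogeneous vector bundle $\mathcal{V}_{\eta,\tau}$ on $G/Q$ of fibre dimension $\dim\eta\cdot\dim\tau$ (built from $V_{\eta}^{\vee}\otimes V_{\tau}$ together with the appropriate modular twist). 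Passing to hyperfunctions rather than distributions is the essential technical move, because this is the class of generalized sections to which the $\mathcal{D}$-module machinery on $X$ applies.

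The hypotheses that $H_{\mathbb C}$ has finitely many connected components and $\#(H_{\mathbb C}\backslash G_{\mathbb C}/Q_{\mathbb C})<\infty$ force $\#(H\backslash G/Q)<\infty$ by a Matsuki-type argument, so the space of $H$-invariant hyperfunction sections of $\mathcal{V}_{\eta,\tau}$ admits a finite filtration by $H$-orbit closures whose associated graded is a sum of local contributions. I would then pass to $X=G_{\mathbb C}/Q_{\mathbb C}$ and form the coherent $\mathcal{D}_X$-module $\mathcal{M}_{\eta,\tau}$ generated by the infinitesimal $H_{\mathbb C}$-invariance conditions inside the sheaf of hyperfunction sections of the complexified bundle $\mathcal{V}_{\eta,\tau}^{\mathbb C}$. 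Because $H_{\mathbb C}$ acts on $X$ with finitely many orbits, the characteristic variety of $\mathcal{M}_{\eta,\tau}$ is contained in the union of the conormal bundles of these orbits, so $\mathcal{M}_{\eta,\tau}$ is holonomic; the multiplicity of $\mathcal{M}_{\eta,\tau}$ along each such conormal is bounded by the dimension of the space of isotropy-invariants in the fibre of $\mathcal{V}_{\eta,\tau}^{\mathbb C}$, hence by $\dim\eta\cdot\dim\tau$. Summing the contributions from the finitely many orbits and applying the standard estimate of the dimension of the global hyperfunction solution space in terms of the characteristic cycle then yields the uniform bound \eqref{/dimdim}.

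The main obstacle, which is also the intermediate uniform boundedness statement for invariant hyperfunctions highlighted in the abstract, is the precise multiplicity estimate for $\mathcal{M}_{\eta,\tau}$: one must show that the $(\eta,\tau)$-dependence enters only multiplicatively through the fibre dimension $\dim\eta\cdot\dim\tau$, with all remaining constants determined by the purely geometric data of the $H_{\mathbb C}$-orbit stratification of $X$. Handling the contributions from the non-open orbits carefully, so that the local factor is no larger than $\dim\eta\cdot\dim\tau$ uniformly, and checking the compatibility between the real and complex pictures so that the estimate on $X$ descends back to the Hom space over $G$, will be where the main technical work lies.
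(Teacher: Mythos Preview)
Your approach is essentially the paper's: reduce the Hom space to invariant generalized sections via the distribution-kernel characterization (the paper's Fact~\ref{GPGP}), embed into hyperfunctions, and bound the solution space of the resulting holonomic $\mathcal D_X$-module by showing that its multiplicity along each conormal $T^*_{X_\alpha}X$ is bounded by a constant times the fibre dimension (the paper's Lemma~\ref{multVtC}, proved by passing to the associated graded, where only the rank and not the module structure of the coefficient survives).

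Two implementation differences are worth noting. First, the paper works on $G$ with the $H\times Q$ action and the single representation $\eta^\vee\otimes\mathbb C_{2\rho}\otimes\tau$ of $\mathfrak h\oplus\mathfrak q$, rather than on $G/Q$ with a bundle; this lets the intermediate hyperfunction bound (Theorem~\ref{Solfin-analytic}) be formulated for \emph{scalar}-valued hyperfunctions, at the price of having to restrict to a relatively compact semianalytic open $U\subset G$ built from the Iwasawa decomposition. Second, the paper never asserts or uses $\#(H\backslash G/Q)<\infty$: instead of filtering by real $H$-orbit closures, it intersects each complex $H_{\mathbb C}$-orbit $X_\alpha$ with the real form and stratifies the (possibly singular) semianalytic set $M\cap X_\alpha$ into finitely many real submanifolds on the compact $U$ (Lemma~\ref{semiMalpha}). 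Your ``Matsuki-type argument'' is therefore unnecessary, and in any case Matsuki's finiteness is specific to the minimal parabolic setting and would not apply directly here.
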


For the proof of Theorem
\ref{Qfin},
we give an upper bound of the dimensions
of 
the relative invariant Sato hyperfunction spaces 
with respect to a group action
by using the theory of holonomic ${\mathcal D}_{X}$-modules.
Let
${\mathcal B}_{M}$
be the sheaf of Sato's hyperfunctions on
a real analytic manifold $M$.
We say that a complex manifold $X$ 
is a complexification of $M$
if $X$ contains $M$ as a real analytic submanifold and $T_{x}X= T_{x}M\oplus \sqrt{-1}T_{x}M$
for any $x\in M$.
\begin{Theorem}
\label{Solfin-analytic}
Let 
$X$
be a complexification of a real analytic manifold $M$.
Suppose that 
a complex Lie group
$H_{\mathbb C}$ 
with finitely many connected components
acts on 
$X$
with
$\#(H_{\mathbb C}\backslash X)<\infty$.
Then,
for any relatively compact semianalytic open subset $U\subset M$,
there exists
$C>0$
such that
for any finite-dimensional representation $\tau$
of
${\mathfrak h}_{{\mathbb C}}$,
we have
\begin{align}
\dim
(\Gamma(U;{\mathcal B}_{M})\otimes \tau)^{{\mathfrak h}_{\mathbb C}}
\leq C\cdot \dim\tau.
\label{GammaBMain}
\end{align}
\end{Theorem}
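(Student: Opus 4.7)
The strategy is to realize $(\Gamma(U;\mathcal{B}_M)\otimes\tau)^{\mathfrak{h}_{\mathbb C}}$ as the space of hyperfunction solutions on $U$ of a suitable holonomic $\mathcal{D}_X$-module $\mathcal{M}_\tau$, and then to estimate the dimension of those solutions uniformly in $\tau$ via the orbit stratification of $X$. For $v\in\mathfrak{h}_{\mathbb C}$, let $L_v$ denote the holomorphic vector field on $X$ induced by the $H_{\mathbb C}$-action, and form the coherent $\mathcal{D}_X$-module
$$\mathcal{M}_\tau := \mathcal{D}_X\otimes_{U(\mathfrak{h}_{\mathbb C})}\tau^*,$$
where $U(\mathfrak{h}_{\mathbb C})$ acts on the right of $\mathcal{D}_X$ via $v\mapsto L_v$. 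For every open $V\subset M$ one has a tautological isomorphism
$$\mathrm{Hom}_{\mathcal{D}_X}\!\bigl(\mathcal{M}_\tau,\Gamma(V;\mathcal{B}_M)\bigr)\simeq (\Gamma(V;\mathcal{B}_M)\otimes\tau)^{\mathfrak{h}_{\mathbb C}},$$
and by construction $\mathcal{M}_\tau$ is generated over $\mathcal{D}_X$ by $\dim\tau$ elements whose defining relations have symbols $(x,\xi)\mapsto\xi(L_v(x))$ on $T^*X$.

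The next step is holonomicity. Let $Z_1,\ldots,Z_N$ denote the finitely many $H_{\mathbb C}$-orbits in $X$. Because the common zero locus of the symbols $\xi(L_v(\cdot))$ is the conormal variety to the orbits, one obtains
$$\mathrm{Char}(\mathcal{M}_\tau)\subseteq \Lambda := \bigl\{(x,\xi)\in T^*X\,\bigm|\,\xi\perp T_x(H_{\mathbb C}\cdot x)\bigr\}=\bigcup_{i=1}^{N}T^*_{Z_i}X.$$
Each orbit is a smooth complex submanifold, so each $T^*_{Z_i}X$ is Lagrangian, $\Lambda$ is a conic Lagrangian analytic subset of $T^*X$, and hence $\mathcal{M}_\tau$ is holonomic. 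Since $\mathcal{M}_\tau$ has $\dim\tau$ generators, one expects the multiplicity of its characteristic cycle along each component $T^*_{Z_i}X$ to be at most $d_i\cdot\dim\tau$ for some constant $d_i$ depending only on $Z_i$.

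I would then conclude by invoking Kashiwara's constructibility theorem: the sheaves $\mathcal{E}xt^j_{\mathcal{D}_X}(\mathcal{M}_\tau,\mathcal{B}_M)$ are $\mathbb{R}$-constructible on $M$ with respect to a real analytic stratification refining $\{Z_i\cap M\}_i$, and the stalks of $\mathcal{H}om_{\mathcal{D}_X}(\mathcal{M}_\tau,\mathcal{B}_M)$ on each stratum are controlled by the multiplicity of the characteristic cycle of $\mathcal{M}_\tau$ along the associated conormal component, hence by a constant multiple of $\dim\tau$. Because $U$ is relatively compact and semianalytic, only finitely many strata meet $U$, and the global sections of an $\mathbb{R}$-constructible sheaf on such a $U$ are bounded by a fixed linear combination of the local stalk dimensions, producing the desired estimate (\ref{GammaBMain}).

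The main difficulty is not finiteness — which follows at once from holonomicity — but the \emph{linearity} of the bound in $\dim\tau$. One must verify carefully that both the multiplicities of $\mathcal{M}_\tau$ along every $T^*_{Z_i}X$ and the local dimensions of hyperfunction solutions on each stratum scale at most linearly with $\dim\tau$; the contributions from the non-open orbits, where transversal microlocal data enter the estimate and the module need not be locally free, will be the most delicate point to pin down rigorously.
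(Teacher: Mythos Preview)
Your outline is exactly the paper's approach: form $\mathcal{M}_\tau=\mathcal{D}_X\otimes_{U(\mathfrak{h}_{\mathbb C})}\tau^*$, observe that the $H_{\mathbb C}$-orbit decomposition is a regular stratification for it, bound the hyperfunction solutions on $U$ stratum by stratum via a filtration-by-support argument combined with Kashiwara's constructibility theorem, and reduce everything to controlling $\mathrm{mult}^{\mathcal{D}_X}_{T^*_{Z_i}X}(\mathcal{M}_\tau)$.

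The step you flag in your last paragraph is indeed the crux, and your sketch does not yet contain the idea that closes it; ``one expects'' is where the proof actually lives. Here is the missing observation. Equip $\mathcal{M}_\tau$ with the good filtration induced from the order filtration on $\mathcal{D}_X\otimes\tau^*$. The generators $L_v\otimes w-1\otimes\tau^*(v)w$ of the relation ideal $I_{\tau^*}$ have principal symbol $\sigma_1(L_v)\otimes w$, since the term $1\otimes\tau^*(v)w$ lies in filtration degree~$0$ and is killed by $\sigma_1$. Hence $\mathrm{gr}(I_{\tau^*})\supset\mathrm{gr}(\mathcal{D}_X)\cdot\bigl(\sigma_1(L_{\mathfrak{h}_{\mathbb C}})\otimes\tau^*\bigr)$, and passing to $\mathcal{O}_{T^*X}$ one obtains a surjection
\[
\mathfrak{N}\otimes\tau^*\;\twoheadrightarrow\;\mathcal{O}_{T^*X}\otimes_{\pi^{-1}\mathrm{gr}(\mathcal{D}_X)}\pi^{-1}\mathrm{gr}(\mathcal{M}_\tau),
\qquad
\mathfrak{N}:=\mathcal{O}_{T^*X}\big/\mathcal{O}_{T^*X}\cdot\sigma_1(L_{\mathfrak{h}_{\mathbb C}}).
\]
The $\mathcal{O}_{T^*X}$-module $\mathfrak{N}$ is independent of $\tau$ and supported on $\Lambda=\bigcup_i T^*_{Z_i}X$, so additivity of multiplicity on short exact sequences yields $\mathrm{mult}^{\mathcal{D}_X}_{T^*_{Z_i}X}(\mathcal{M}_\tau)\le\dim\tau\cdot\mathrm{mult}^{\mathcal{O}_{T^*X}}_{T^*_{Z_i}X}(\mathfrak{N})$. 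This is precisely the linear bound you need, with $d_i=\mathrm{mult}^{\mathcal{O}_{T^*X}}_{T^*_{Z_i}X}(\mathfrak{N})$; the ``delicate'' transversal microlocal analysis at non-open orbits you anticipate is thereby bypassed entirely, because $\tau$ disappears at the symbol level and only the orbit geometry survives.
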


Moreover,
we can give an alternative approach of (II)$\Rightarrow$(I) in Fact \ref{KOB}
as a corollary of
Theorem \ref{Solfin-analytic}.

\begin{Remark}
As stated in Remarks \ref{CV1} and \ref{CV2},
explicit upper bounds of multiplicities were already given in \cite{KO}.
Using the method of this article,
one can show that the left-hand side of \eqref{/dimdim}
is bounded by
\begin{eqnarray*}
	\sum_{p=0}^{\dim G_{\mathbb C}/Q_{\mathbb C}}
	c_{p}\frac{(\dim G_{\mathbb C}/Q_{\mathbb C})!}
	{(\dim G_{\mathbb C}/Q_{\mathbb C}-p)!}\cdot (2k_{0})^{p},
\end{eqnarray*}
which is not optimal 
because this is the sum of upper bounds of the dimensions 
of invariant hyperfunctions supported by each $H$-orbit on $G/Q$
(Intertwining operators can be regarded as
invariant 
distributions,
see Fact \ref{GPGP}).
Here,
$k_{0}$ is the maximum value of heights of the roots of ${\mathfrak g}_{\mathbb C}$ 
relative to some Cartan subalgebra
and $c_{p}$ is the sum 
of the numbers of connected components
of ${\mathcal O}\cap G/Q$
for all $p$-codimensional 
$H_{\mathbb C}$-orbits ${\mathcal O}$ on $G_{\mathbb C}/Q_{\mathbb C}$.
\end{Remark}

\begin{Remark}
In addition to the original proof of the implication (II) $\Rightarrow$ (I) in Fact \ref{KOB} given in \cite{KO},
Kobayashi suggested an alternative approach by using the theory of holonomic ${\mathcal D}$-modules in Bonn, September 2011.
In this direction,
there is a recent work by 
A. Aizenbud,
D. Gourevitch and A. Minchenko \cite{AGM}
by an approach of holonomic ${\mathcal D}$-modules,
however,
there are also some differences between 
Theorem \ref{Solfin-analytic}
and their results.
Their proof utilizes an argument of the Weil representation
and a different filtration from ours,
and their estimate of the multiplicity is not strong enough to
recover the implication (II) $\Rightarrow$ (I) in Fact \ref{KOB}.
We note that 
\cite[Thm.\:D]{AGM}
for bundle-valued tempered distributions in the algebraic setting
can be deduced from Theorem \ref{Solfin-analytic}
for hyperfunctions in the analytic setting.
	\end{Remark}

This article is organized as follows.
In Section
\ref{preliminary},
we recall the theory of sheaves and
${\mathcal D}_{X}$-modules.
In
Section
\ref{Holonomy-dimension},
we
give an upper bound of 
the dimensions of
the Sato hyperfunction solution spaces 
for holonomic ${\mathcal D}_{X}$-modules.
We prove Theorem \ref{Solfin-analytic}
in Section
\ref{proofSolfin}.
Theorem \ref{Qfin}
is proved in
Section
\ref{proofQfin}.
An alternative approach of
(II)$\Rightarrow$(I) in Fact \ref{KOB} is given in Section \ref{APB}.

\section*{Acknowledgment}
The author is grateful to Professor Toshiyuki Kobayashi for his helpful advice and constant encouragement.
He also thanks the anonymous referee for his/her careful reading of  the manuscript and many helpful comments and suggestions. 

\section{Preliminaries of Sheaves and ${\mathcal D}_{X}$-modules}
\label{preliminary}
In this section,
we recall the basic notions of the theory of 
sheaves and 
${\mathcal D}_{X}$-modules.
Although almost all the materials in this section
are well known,
we prove some results for the convenience.

\subsection{Sheaves}
In this subsection,
we briefly recall the basic notions of the theory
of
sheaves.
For 
further references on this subject, see
\cite{Kas},
for example.

Let $X$ be a good topological space
(i.e., a
Hausdorff, locally compact space which is countable at infinity and has finite flabby dimension),
and ${\mathbb C}_{X}$ the constant sheaf on $X$.
We write
${\rm Mod}({\mathbb C}_{X})$ for
the abelian category of sheaves of
${\mathbb C}$-vector spaces on $X$
and
${\rm D}^{b}({\mathbb C}_{X})$
for the bounded derived category of
${\rm Mod}({\mathbb C}_{X})$.
Identifying  
a sheaf ${\mathcal S}\in{\rm Mod}({\mathbb C}_{X})$ 
with 
a complex of sheaves
$$
\dots\to0\to{\mathcal S}\to0\to\dots\qquad \in{\rm D}^{b}({\mathbb C}_{X})
$$ (with ${\mathcal S}$ in degree $0$),
we regard ${\rm Mod}({\mathbb C}_{X})$
as a full subcategory of ${\rm D}^{b}({\mathbb C}_{X})$.

For a closed subset $\iota_{K}\colon K\hookrightarrow X$
and
${\mathcal S}\in{\rm Mod}({\mathbb C}_{X})$,
we define ${\mathcal S}_{K}\in{\rm Mod}({\mathbb C}_{X})$ by
\begin{eqnarray*}
{\mathcal S}_{K}:=\iota_{K*}\iota_{K}^{-1}({\mathcal S}),
\end{eqnarray*}
where
$\iota_{K*}$
and
$\iota_{K}^{-1}$
are the direct image functor
and the inverse image functor,
respectively.
Then, because
$(\iota_{K}^{-1},\iota_{K*})$
is an adjoint pair of functors,
we have
\begin{equation}
{\rm Hom}_{{\rm Mod}({\mathbb C}_{K})}(\iota_{K}^{-1}({\mathcal S}),\iota_{K}^{-1}({\mathcal S}))
\simeq {\rm Hom}_{{\rm Mod}({\mathbb C}_{X})}({\mathcal S},\iota_{K*}\iota_{K}^{-1}({\mathcal S})).
\label{adjunction}
\end{equation}
Thus, there exists a natural map
${\mathcal S}\to{\mathcal S}_{K}$,
which corresponds to the identity of the left-hand side of \eqref{adjunction}.
Then,
for an open subset $U\subset X$,
we define
${\mathcal S}_{U}:=\operatorname{Ker}({\mathcal S}\to{\mathcal S}_{K})$,
where $K:=X\backslash U$.
For a locally closed subset $Z\subset X$,
we take an open subset $U\subset X$
and a closed subset $K\subset X$
satisfying $Z=U\cap K$
and define
${\mathcal S}_{Z}:=({\mathcal S}_{U})_{K}$.
This definition does not depend on the choice of $U$ and $K$.
We sometimes abbreviate 
$({\mathbb C}_{X})_{Z}$
to
${\mathbb C}_{Z}$.
We write ${\rm Mod}_{K}({\mathbb C}_{X})$
for the full subcategory of 
${\rm Mod}({\mathbb C}_{X})$
consisting objects whose supports are contained in 
a closed subset $K\subset X$.
\begin{Lemma}
\label{iiid}
For ${\mathcal S}\in{\rm Mod}_{K}({\mathbb C}_{X})$,
we have
${\mathcal S}_{K}\simeq {\mathcal S}$.
\end{Lemma}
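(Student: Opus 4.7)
The plan is to verify that the natural morphism $\varphi\colon {\mathcal S}\to{\mathcal S}_{K}$ coming from the unit of the adjunction $(\iota_{K}^{-1},\iota_{K*})$ described in the preceding paragraph is an isomorphism in ${\rm Mod}({\mathbb C}_{X})$. Since being an isomorphism of sheaves can be tested stalkwise, I would split the verification into two cases according to whether $x\in K$ or $x\notin K$.

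For $x\in K$, I would use that $\iota_{K}$ is a closed embedding, so the direct image functor $\iota_{K*}$ restricts naturally on stalks at points of $K$: concretely, $(\iota_{K*}{\mathcal T})_{x}={\mathcal T}_{x}$ for any ${\mathcal T}\in{\rm Mod}({\mathbb C}_{K})$ and any $x\in K$. Combining this with $(\iota_{K}^{-1}{\mathcal S})_{x}={\mathcal S}_{x}$ (which holds for any inverse image), one gets $({\mathcal S}_{K})_{x}={\mathcal S}_{x}$, and an inspection of the construction of the unit $\varphi$ shows that $\varphi_{x}$ is the identity map. For $x\notin K$, the hypothesis ${\rm supp}({\mathcal S})\subset K$ gives ${\mathcal S}_{x}=0$, while $({\mathcal S}_{K})_{x}=0$ as well because any sheaf of the form $\iota_{K*}(-)$ is supported inside $K$. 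Hence $\varphi_{x}\colon 0\to 0$ is trivially an isomorphism.

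Since $\varphi_{x}$ is an isomorphism for every $x\in X$, the morphism $\varphi$ itself is an isomorphism, which yields ${\mathcal S}\simeq{\mathcal S}_{K}$ as claimed. I do not anticipate a real obstacle here: the statement is essentially a reformulation of the standard fact that $\iota_{K*}$ and $\iota_{K}^{-1}$ induce an equivalence between ${\rm Mod}({\mathbb C}_{K})$ and the subcategory ${\rm Mod}_{K}({\mathbb C}_{X})$ of sheaves on $X$ supported in $K$, and the only point worth being careful about is the elementary stalk computation at points outside $K$, where one uses the support hypothesis on ${\mathcal S}$.
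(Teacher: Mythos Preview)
Your proof is correct and follows essentially the same approach as the paper: both arguments verify the natural map ${\mathcal S}\to{\mathcal S}_{K}$ is an isomorphism by a stalkwise computation using the support hypothesis. The paper packages this via the short exact sequence $0\to{\mathcal S}_{X\backslash K}\to{\mathcal S}\to{\mathcal S}_{K}\to 0$ and shows the first term vanishes on stalks, while you check the unit map directly at each point, but the underlying content is the same.
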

\begin{proof}
By the definition,
we have an exact sequence
\begin{eqnarray}
0\to
{\mathcal S}_{X\backslash K}\to
{\mathcal S}\to
{\mathcal S}_{K}\to
0.
\label{SSS}
\end{eqnarray}
Taking stalks at each point,
we have $S_{X\backslash K}=0$.
Thus,
\eqref{SSS} implies the lemma.
\end{proof}

\begin{Lemma}
\label{gammaKS}
For a closed subset $\iota_{K}\colon K\hookrightarrow X$
and
${\mathcal S}\in{\rm Mod}_{K}({\mathbb C}_{X})$,
we have $\Gamma(K;\iota_{K}^{-1}{\mathcal S})=\Gamma(X;S)$,
where
$\Gamma(X;{\mathcal S})$
is the space of sections of ${\mathcal S}$ on  $X$.
\end{Lemma}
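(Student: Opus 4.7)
The plan is to derive the identity by chaining together the definition of ${\mathcal S}_K$, the previous lemma, and the standard behaviour of direct images under a closed embedding. First, I would recall that by definition ${\mathcal S}_K = \iota_{K*}\iota_K^{-1}{\mathcal S}$. Since ${\mathcal S}\in{\rm Mod}_K({\mathbb C}_X)$, Lemma \ref{iiid} gives a natural isomorphism ${\mathcal S}\simeq {\mathcal S}_K = \iota_{K*}\iota_K^{-1}{\mathcal S}$. Taking global sections on $X$, this yields
\[
\Gamma(X;{\mathcal S}) \simeq \Gamma(X;\iota_{K*}\iota_K^{-1}{\mathcal S}).
\]

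Next, for any sheaf ${\mathcal T}$ on $K$, the direct image under the closed embedding $\iota_K$ satisfies $\Gamma(X;\iota_{K*}{\mathcal T}) = \Gamma(\iota_K^{-1}(X);{\mathcal T}) = \Gamma(K;{\mathcal T})$, which is immediate from the definition of the pushforward (sections over $X$ are sections over $\iota_K^{-1}(X) = K$). Applying this with ${\mathcal T} = \iota_K^{-1}{\mathcal S}$ gives
\[
\Gamma(X;\iota_{K*}\iota_K^{-1}{\mathcal S}) = \Gamma(K;\iota_K^{-1}{\mathcal S}),
\]
and combining the two displayed equations finishes the proof.

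There is essentially no obstacle here; the statement is a routine compatibility between support, direct image, and global sections for a closed embedding. The only point worth double-checking is the invocation of Lemma \ref{iiid}, which already did the real work by exploiting that the stalks of ${\mathcal S}$ vanish outside $K$ to identify ${\mathcal S}$ with $\iota_{K*}\iota_K^{-1}{\mathcal S}$.
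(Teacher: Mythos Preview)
Your proof is correct and follows exactly the same route as the paper: invoke Lemma~\ref{iiid} to identify ${\mathcal S}\simeq {\mathcal S}_K=\iota_{K*}\iota_K^{-1}{\mathcal S}$, then apply $\Gamma(X;-)$ and use the defining property of the pushforward to obtain $\Gamma(X;\iota_{K*}\iota_K^{-1}{\mathcal S})=\Gamma(K;\iota_K^{-1}{\mathcal S})$.
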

\begin{proof}
	${\mathcal S}\simeq {\mathcal S}_{K}\simeq \iota_{K*}\iota_{K}^{-1}({\mathcal S})$
	implies
	$
	\Gamma(X;S) = \Gamma(X;\iota_{K*}\iota_{K}^{-1}({\mathcal S}))
	=
	\Gamma(K;\iota_{K}^{-1}{\mathcal S}).
	$
\end{proof}

For a closed subset $K$ of an open subset $U\subset X$,
we define a subspace of 
$\Gamma(U;{\mathcal S})$
by
$$
\Gamma_{K}(U;{\mathcal S}):=\operatorname{Ker}(\Gamma(U;{\mathcal S})
\to\Gamma(U\backslash K;{\mathcal S})).
$$
For a locally closed subset $Z\subset X$,
we take an open subset $U\subset X$
and a closed subset $K\subset X$
satisfying $Z=U\cap K$
and set
$
\Gamma_{Z}(U;{\mathcal S}):=\Gamma_{U\cap Z}(U;{\mathcal S})
$.
Then,
for a locally closed subset $Z\subset X$, 
the sheaf
$\Gamma_{Z}({\mathcal S})\in {\rm Mod}({\mathbb C}_{X})$
is defined
by
\begin{align*}
\Gamma(U;\Gamma_{Z}({\mathcal S})):=
\Gamma_{Z\cap U}(U;{\mathcal S})
\end{align*}
for an open subset $U\subset X$.
Then,
$\Gamma_{Z}$
is an endofunctor on
${\rm Mod}({\mathbb C}_{X})$.
We write ${\mathbb R}\Gamma_{K}\colon {\rm D}^{b}({\mathbb C}_{X})\to{\rm D}^{b}({\mathbb C}_{X})$
for the right derived functor of $\Gamma_{Z}$
and
${\mathbb R}^{k}\Gamma_{Z}$
for the $k$-th cohomology.
We note that $\Gamma_{U}=j_{*}\circ j^{-1}$
for an open subset
$j\colon U\hookrightarrow X$
and that ${\rm supp}(\Gamma_{K}({\mathcal S}))\subset K$ for a closed subset $K\subset X$.

We quote the following two properties of 
the functor $\Gamma_{Z}$.
See, 
\cite[Prop.\:2.3.9]{Kas}
or
\cite[Prop\:4.14 in Appx.\:II]{Bjork},
for example.
\begin{Lemma}
\label{ZZZ}
Let $Z$ be a locally closed subset of $X$
and
$Z'$ a closed subset of
$Z$.
Then,
for
${\mathcal S}\in {\rm Mod}({\mathbb C}_{X})$,
we have an exact sequence
\begin{eqnarray*}
0\to
\Gamma_{Z'}({\mathcal S})\to
\Gamma_{Z}({\mathcal S})\to
\Gamma_{Z\backslash Z'}({\mathcal S}).
\end{eqnarray*}
\end{Lemma}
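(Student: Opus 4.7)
The plan is to verify the exact sequence directly at the level of sections, exploiting that all three sheaves can be realized as subspaces of sections of $\mathcal{S}$ on appropriate open subsets of $X$. Writing $Z = U \cap K$ with $U$ open and $K$ closed in $X$, and noting that $Z' \subset Z$ being closed in $Z$ gives a closed set $F \subset X$ with $Z' = Z \cap F$, I would use the decompositions $Z' = U \cap (K \cap F)$ and $Z \setminus Z' = (U \setminus F) \cap K$. A crucial observation is that $Z'$ and $Z$ can share the same ambient open set $U$.

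For an arbitrary open $V \subset X$, the spaces $\Gamma_{Z'}(V;\mathcal{S})$ and $\Gamma_{Z}(V;\mathcal{S})$ both sit inside $\Gamma(U \cap V; \mathcal{S})$, as sections vanishing off $Z'$ and $Z$ respectively. Since $Z' \subset Z$, the first is contained in the second, and this inclusion supplies the first map (which is manifestly injective). For the second map, the ordinary restriction $\Gamma(U \cap V; \mathcal{S}) \to \Gamma((U \setminus F) \cap V; \mathcal{S})$ sends a section supported on $Z$ to one supported on $Z \setminus Z' = (U \setminus F) \cap K$; a short set-theoretic check shows the image lies in $\Gamma_{Z \setminus Z'}(V;\mathcal{S})$.

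Exactness in the middle is then a direct calculation: a section $s$ in the kernel must vanish both on $(U \cap V) \setminus Z = V \cap U \cap (X \setminus K)$ and on $V \cap (U \setminus F) = V \cap U \cap (X \setminus F)$. The union of these two sets equals $(U \cap V) \setminus (K \cap F) = (U \cap V) \setminus Z'$, so $s$ is supported in $Z'$, i.e., it lies in $\Gamma_{Z'}(V;\mathcal{S})$ and hence in the image of the first map. Sheafifying over $V$ delivers the stated exact sequence.

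The main obstacle I anticipate is the bookkeeping: one must verify that the two arrows above are canonical, independent of the auxiliary choices of $U$, $K$, and $F$, so that they actually arise from morphisms of sheaves compatible with the paper's definition $\mathcal{S}_Z = (\mathcal{S}_U)_K$. A cleaner and essentially equivalent route, which avoids this bookkeeping, is to first identify $\Gamma_Z(\mathcal{S}) \simeq \mathcal{H}om_{\mathbb{C}_X}(\mathbb{C}_Z, \mathcal{S})$ for every locally closed $Z$, then construct the short exact sequence $0 \to \mathbb{C}_{Z \setminus Z'} \to \mathbb{C}_Z \to \mathbb{C}_{Z'} \to 0$ (checked stalkwise: at $x \in Z'$ it is $0 \to 0 \to \mathbb{C} \to \mathbb{C} \to 0$; at $x \in Z \setminus Z'$ it is $0 \to \mathbb{C} \to \mathbb{C} \to 0 \to 0$; otherwise zero), and apply the left-exact contravariant functor $\mathcal{H}om_{\mathbb{C}_X}(-, \mathcal{S})$. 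This is essentially the path taken in the cited reference \cite{Kas}, and once the $\mathcal{H}om$ identification is in hand the conclusion is immediate.
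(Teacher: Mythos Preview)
The paper does not give its own proof of this lemma; it is quoted from the literature (\cite[Prop.\:2.3.9]{Kas} and \cite[Prop.\:4.14 in Appx.\:II]{Bjork}). Your second approach---identifying $\Gamma_{Z}({\mathcal S})\simeq {\mathcal H}om_{{\mathbb C}_{X}}({\mathbb C}_{Z},{\mathcal S})$ and applying the left-exact functor ${\mathcal H}om_{{\mathbb C}_{X}}(-,{\mathcal S})$ to the stalkwise-checked short exact sequence $0\to{\mathbb C}_{Z\setminus Z'}\to{\mathbb C}_{Z}\to{\mathbb C}_{Z'}\to 0$---is precisely the argument in \cite{Kas}, so your proposal agrees with the cited source. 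Your first, direct approach is also correct; it is essentially an unpacking of the same argument at the level of sections, and the bookkeeping concern you raise (independence of the auxiliary $U$, $K$, $F$) is handled exactly by the ${\mathcal H}om$ formulation, since ${\mathbb C}_{Z}$ is intrinsic to $Z$.
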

\begin{Lemma}
\label{KK'}
Let
$K$ and $K'$
be closed subsets of $X$.
Then we have
\begin{eqnarray*}
{\mathbb R}\Gamma_{K'}\circ
{\mathbb R}\Gamma_{K}(\cdot)\simeq
{\mathbb R}\Gamma_{K\cap K'}(\cdot).
\end{eqnarray*}
\end{Lemma}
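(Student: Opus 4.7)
The plan is to prove Lemma \ref{KK'} in two stages, first establishing the identity at the level of underived functors and then upgrading it to derived functors via Grothendieck's composition of derived functors theorem.

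First I would verify the underived identity $\Gamma_{K'}\circ\Gamma_{K}(\mathcal{S})\simeq \Gamma_{K\cap K'}(\mathcal{S})$ for any $\mathcal{S}\in{\rm Mod}(\mathbb{C}_{X})$. This is a direct unwinding of the definitions: for an open $U\subset X$, a section $s\in\Gamma(U;\mathcal{S})$ lies in $\Gamma(U;\Gamma_{K'}(\Gamma_{K}(\mathcal{S})))$ if and only if it is a section of $\Gamma_{K}(\mathcal{S})$ over $U$ that vanishes on $U\setminus K'$, equivalently if ${\rm supp}(s)\subset K\cap U$ and ${\rm supp}(s)\subset K'\cap U$, which is the same as ${\rm supp}(s)\subset (K\cap K')\cap U$. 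This is the defining condition for $s\in\Gamma(U;\Gamma_{K\cap K'}(\mathcal{S}))$.

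Next I would invoke Grothendieck's theorem, which reduces the derived identity to showing that $\Gamma_{K}$ sends injective sheaves to $\Gamma_{K'}$-acyclic ones. The standard way is to observe that injective sheaves are flabby and that $\Gamma_{K}$ preserves flabbiness: given a flabby sheaf $\mathcal{F}$ and an inclusion of opens $V\subset U$, a diagram chase using the defining exact sequence $0\to\Gamma_{K}(U;\mathcal{F})\to\Gamma(U;\mathcal{F})\to\Gamma(U\setminus K;\mathcal{F})$ together with the surjectivity $\Gamma(U;\mathcal{F})\twoheadrightarrow\Gamma(V;\mathcal{F})$ and $\Gamma(U\setminus K;\mathcal{F})\twoheadrightarrow\Gamma(V\setminus K;\mathcal{F})$ produces a preimage in $\Gamma_{K}(U;\mathcal{F})$ for any given element of $\Gamma_{K}(V;\mathcal{F})$. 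Since flabby sheaves are $\Gamma_{K'}$-acyclic, the composition hypothesis is satisfied.

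Combining these two steps gives $\mathbb{R}\Gamma_{K'}\circ\mathbb{R}\Gamma_{K}\simeq\mathbb{R}(\Gamma_{K'}\circ\Gamma_{K})\simeq\mathbb{R}\Gamma_{K\cap K'}$ on ${\rm D}^{b}(\mathbb{C}_{X})$. I expect the main (mild) obstacle to be the bookkeeping in the flabbiness-preservation diagram chase; everything else is formal. If one prefers to avoid flabbiness, an alternative is to show that $\Gamma_{K}$ takes injectives to injectives directly, using the adjunction formalism that relates $\Gamma_{K}$ to $\mathcal{H}om$ with an appropriate constant sheaf, but the flabbiness route is more elementary and keeps the argument self-contained.
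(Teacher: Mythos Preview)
The paper does not give its own proof of this lemma: it is quoted as a standard property of the functor $\Gamma_{Z}$, with references to \cite[Prop.\:2.3.9]{Kas} and \cite[Prop.\:4.14 in Appx.\:II]{Bjork}. So there is nothing to compare against at the level of argument---your proposal supplies a proof where the paper simply cites one.

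Your approach is the standard one and is correct. One small remark on the flabbiness-preservation step: the diagram chase you sketch needs a touch more than the two surjections you list. Given $s\in\Gamma_{K}(V;\mathcal{F})$, a flabby lift $t\in\Gamma(U;\mathcal{F})$ need not have support in $K$; you must correct $t$ by an element $t'\in\Gamma(U;\mathcal{F})$ with $t'|_{V}=0$ and $t'|_{U\setminus K}=t|_{U\setminus K}$. Such a $t'$ exists because $t|_{U\setminus K}$ and $0$ agree on $V\setminus K$, so they glue to a section on $(U\setminus K)\cup V$, which then extends to $U$ by flabbiness. This is presumably what you have in mind by ``diagram chase'', but it is worth making explicit since the two stated surjections alone do not immediately yield the preimage. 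The alternative route you mention, via the identification $\Gamma_{K}(\cdot)\simeq\mathcal{H}om(\mathbb{C}_{K},\cdot)$ and the adjunction showing this preserves injectives, is exactly how Kashiwara--Schapira prove it in the cited reference.
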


For a morphism 
$f\colon X\to Y$
of good topological spaces,
we write
$f_{!}\colon {\rm Mod}({\mathbb C}_{X})\to {\rm Mod}({\mathbb C}_{Y})$
for the proper direct image functor.
Namely,
for
${\mathcal S}\in {\rm Mod}({\mathbb C}_{X})$,
we have
$$
\Gamma(U;f_{!}{\mathcal S})=
\{
s\in\Gamma(f^{-1}(U);{\mathcal S})\mid
f\colon\operatorname{supp}(s)\to U\:\text{is proper}\},
$$
where
$U$ is an 
open subset of $Y$.
We also write
$f^{!}$
for the right adjoint functor of
the right derived functor
${\mathbb R}f_{!}$
of $f_{!}$.

Let 
$\{pt\}$
be a topological space with one element.
Then, 
there exists
a natural map 
$a_{X}\colon X\to\{pt\}$.
The dualizing complex $\omega_{X}$ on $X$ is defined by
$\omega_{X}:=a_{X}^{!}({\mathbb C}_{\{pt\}})\in {\rm D}^{b}({\mathbb C}_{X})$.
If $X$ is a real manifold,
the orientation sheaf
$or_{X}$ is defined
by
\begin{eqnarray}
or_{X}:=\omega_{X}[-\dim X]\qquad \in {\rm Mod}({\mathbb C}_{X}),
\label{or}
\end{eqnarray}
where
$[-\dim X]$ is the shift functor.
Note that 
$or_{X}$ is locally isomorphic to ${\mathbb C}_{X}$.
Namely,
for any point $x\in X$,
there exists an open neighborhood $U\subset X$,
we have $or_{X}|_{U}\simeq {\mathbb C}_{X}|_{U}\in{\rm Mod}({\mathbb C}_{U})$.
Here, we set ${\mathbb C}_{X}|_{U}:=\iota_{U}^{-1}({\mathbb C}_{X})$
for an open embedding $\iota_{U}\colon U\hookrightarrow X$.

At the end of this section,
we prove Lemma \ref{MN},
which will be used in later sections.
For this,
we need Lemma \ref{ff}
whose proof can be found in 
\cite[Prop.\:3.1.12]{Kas}
for example.
\begin{Lemma}
\label{ff}
Let
$f\colon X\to Y$
be a morphism of good topological spaces.
Assume
$X$ is diffeomorphic to some locally closed subset of $Y$
by $f$.
Then we have
$$
f^{!}(\cdot)\simeq
f^{-1}\circ {\mathbb R}\Gamma_{f(X)}(\cdot).
$$
In particular,
if $f\colon X\to Y$ is a closed embedding,
Lemma \ref{iiid} implies
$$
f_{*}\circ f^{!}(\cdot)\simeq
 {\mathbb R}\Gamma_{f(X)}(\cdot)
$$
by exactness of $f_{*}\circ f^{-1}(\cdot)=(\cdot)_{f(X)}$.
\end{Lemma}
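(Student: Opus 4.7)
The plan is to factor $f$ and reduce to the basic cases of a homeomorphism, an open embedding, and a closed embedding. Since $f$ realizes $X$ as a locally closed subset of $Y$, I would first write $f = \iota\circ g$ where $g\colon X\DistTo f(X)$ is a homeomorphism and $\iota\colon f(X)\hookrightarrow Y$ is the inclusion. For $g$ the functor $g_{!}=g_{*}$ is an equivalence of categories with inverse $g^{-1}$, so $g^{!}\simeq g^{-1}$. Writing $f(X) = U\cap K$ with $U\subset Y$ open and $K\subset Y$ closed, $f(X)$ is closed in $U$, so $\iota = j\circ i$ factors through a closed embedding $i\colon f(X)\hookrightarrow U$ followed by an open embedding $j\colon U\hookrightarrow Y$; correspondingly $f^{!} \simeq g^{-1}\circ i^{!}\circ j^{!}$.

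For the open embedding $j$ the proper direct image $j_{!}$ is extension by zero, which is exact; the adjoint pair $(j_{!},j^{-1})$ then yields $j^{!}\simeq j^{-1}$. For the closed embedding $i$ one has $i_{!}=i_{*}$ exact, and I would identify the right adjoint of $i_{*}$ with $i^{-1}\circ{\mathbb R}\Gamma_{f(X)}$: every morphism $i_{*}\mathcal{F}\to\mathcal{G}$ factors through the largest subobject of $\mathcal{G}$ with support in $f(X)$, that is, through ${\mathbb R}\Gamma_{f(X)}(\mathcal{G})$, and then the ordinary $(i^{-1},i_{*})$ adjunction gives the claim.

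Combining the pieces and invoking the compatibility $j^{-1}\circ{\mathbb R}\Gamma_{f(X)}\simeq{\mathbb R}\Gamma_{f(X)}\circ j^{-1}$ (which holds because sections with support in a closed subset of $U\subset Y$ are a local notion on $Y$), one obtains
$$
f^{!} \simeq g^{-1}\circ i^{-1}\circ{\mathbb R}\Gamma_{f(X)}\circ j^{-1} \simeq f^{-1}\circ{\mathbb R}\Gamma_{f(X)},
$$
which is the first assertion. For the ``in particular'' clause, apply $f_{*}$ to both sides; since $f$ is a closed embedding, $f_{*}\circ f^{-1}(\cdot) = (\cdot)_{f(X)}$ by the very definition of $\mathcal{S}_{K}$, and because ${\mathbb R}\Gamma_{f(X)}(\cdot)$ already has support in $f(X)$, Lemma \ref{iiid} yields $({\mathbb R}\Gamma_{f(X)}(\cdot))_{f(X)}\simeq{\mathbb R}\Gamma_{f(X)}(\cdot)$, proving the second isomorphism.

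The main technical obstacle is the derived-level compatibility of $j^{-1}$ with ${\mathbb R}\Gamma_{f(X)}$: on the underlying non-derived functors this is essentially tautological, but passing to derived functors requires verifying that the flabby (or injective) resolutions used to compute ${\mathbb R}\Gamma_{f(X)}$ restrict well along the open embedding $j$. This is precisely the excision-type argument in Kashiwara--Schapira, and is where the careful bookkeeping sits; the rest of the argument is a formal assembly of adjunctions.
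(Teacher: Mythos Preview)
Your argument is correct. Note, however, that the paper does not supply its own proof of this lemma: it simply quotes the result from Kashiwara--Schapira \cite[Prop.\:3.1.12]{Kas}. Your factorization of the locally closed embedding as (homeomorphism)\,$\circ$\,(closed embedding)\,$\circ$\,(open embedding), together with the identifications $j^{!}\simeq j^{-1}$ for open $j$ and $i^{!}\simeq i^{-1}\circ\mathbb{R}\Gamma_{f(X)}$ for closed $i$, is exactly the standard route taken in that reference, so there is no substantive difference in approach. The one place you flag as delicate---the commutation $j^{-1}\circ\mathbb{R}\Gamma_{f(X)}\simeq\mathbb{R}\Gamma_{f(X)}\circ j^{-1}$ for open $j$---is handled there by the observation that $j^{-1}$ preserves injectives (or flabby sheaves), which is precisely what you say; your treatment of the ``in particular'' clause via Lemma~\ref{iiid} matches the paper's statement verbatim.
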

\begin{Lemma}
\label{MN}
Let 
$L$
be a real analytic manifold,
$M$
a closed $m$-dimensional submanifold of $L$,
and
$N$
a closed $n$-dimensional submanifold of $M$.
Then,
$
{\mathbb R}\Gamma_{N}({\mathbb C}_{M})
$
is locally isomorphic to
$
{\mathbb C}_{N}[-(m-n)]
$
as an object of
${\rm D}^{b}({\mathbb C}_{L})$.
\end{Lemma}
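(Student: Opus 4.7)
The statement is local, and both sides clearly vanish away from $N$, so I fix $x_0 \in N$ and work in local coordinates in which $N \subset M \subset L$ become nested affine subspaces.

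Let $\iota \colon N \hookrightarrow L$ denote the closed embedding. Lemma \ref{ff} gives $\mathbb{R}\Gamma_N(\mathbb{C}_M) \simeq \iota_* \iota^!(\mathbb{C}_M)$, and since $\iota_* \mathbb{C}_N = (\mathbb{C}_L)_N$ equals $\mathbb{C}_N$ in the paper's notation, it suffices to prove the local isomorphism $\iota^!(\mathbb{C}_M) \simeq \mathbb{C}_N[-(m-n)]$ in $\mathrm{D}^b(\mathbb{C}_N)$. I would factor $\iota = \iota_2 \circ \iota_1$ through the closed embeddings $\iota_1 \colon N \hookrightarrow M$ and $\iota_2 \colon M \hookrightarrow L$, so that $\iota^! = \iota_1^! \iota_2^!$. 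Because $\mathbb{C}_M = \iota_{2*}\mathbb{C}_M$ (viewing the right-hand side as the constant sheaf on $M$) and the standard adjunction identity $\iota_2^! \iota_{2*} \simeq \mathrm{id}$ holds for any closed embedding (a consequence of Lemma \ref{ff} together with full faithfulness of $\iota_{2*}$), we obtain $\iota_2^!(\mathbb{C}_M) \simeq \mathbb{C}_M$ in $\mathrm{D}^b(\mathbb{C}_M)$. The task then reduces to computing $\iota_1^!(\mathbb{C}_M)$ locally on $N$.

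This last step is the main technical point, and I would handle it using the orientation sheaf. For the constant map $a_X \colon X \to \{\mathrm{pt}\}$, definition \eqref{or} reads $a_X^!(\mathbb{C}_{\{\mathrm{pt}\}}) = \omega_X = or_X[\dim X]$. Functoriality of $(-)^!$ applied to $a_N = a_M \circ \iota_1$ gives
$$or_N[n] \simeq \iota_1^!\bigl(a_M^!(\mathbb{C}_{\{\mathrm{pt}\}})\bigr) \simeq \iota_1^!(or_M)[m],$$
hence $\iota_1^!(or_M) \simeq or_N[-(m-n)]$. Since the paper records that $or_M$ and $or_N$ are locally isomorphic to the constant sheaves $\mathbb{C}_M$ and $\mathbb{C}_N$ respectively, the same local isomorphism transfers to $\iota_1^!(\mathbb{C}_M) \simeq \mathbb{C}_N[-(m-n)]$, which combined with the reduction above yields the lemma. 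The main obstacle is bookkeeping the various shifts and ensuring that the codimension $m-n$ arises correctly: the shift comes from the closed embedding $\iota_1$ of $N$ into $M$, not from any embedding into $L$, so the factorization through $M$ is essential and the step $\iota_2^!(\mathbb{C}_M) \simeq \mathbb{C}_M$ is what absorbs the ambient manifold $L$.
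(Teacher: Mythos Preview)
Your proof is correct and follows essentially the same approach as the paper: both arguments reduce to the local identification $\omega_X \simeq \mathbb{C}_X[\dim X]$ from \eqref{or} and use functoriality of $(-)^!$ along $a_N = a_M \circ \iota_1$ to produce the shift $-(m-n)$. The only cosmetic difference is that the paper works directly on $M$ (applying Lemma~\ref{ff} to $\iota_N\colon N\to M$ and then pushing forward by the exact functor $\iota_{M*}$ to land in $\mathrm{D}^b(\mathbb{C}_L)$), whereas you start on $L$ and absorb the ambient manifold via the identity $\iota_2^!\iota_{2*}\simeq\mathrm{id}$.
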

\begin{proof}
By \eqref{or},
we have locally
${\mathbb C}_{M}[m]\simeq a_{M}^{!}({\mathbb C}_{\{pt\}})$
and
${\mathbb C}_{N}[n]\simeq a_{N}^{!}({\mathbb C}_{\{pt\}})$.
Setting
$\iota_{N}\colon N\to M$,
we have
$a_{M}\circ\iota_{N}=a_{N}$.
Therefore, 
by Lemma \ref{ff},
we have locally
the following isomorphisms as an object of ${\rm D}^{b}({\mathbb C}_{M})$:
\begin{eqnarray*}
{\mathbb R}\Gamma_{N}({\mathbb C}_{M})[m]
&\simeq&
{\mathbb R}\Gamma_{N}(a_{M}^{!}({\mathbb C}_{\{pt\}}))
\\
&\simeq&
\iota_{N*} \iota_{N}^{!}a_{M}^{!}({\mathbb C}_{\{pt\}})\\
& \simeq& \iota_{N*} (a_{M}\circ\iota_{N})^{!}({\mathbb C}_{\{pt\}})\\
 &\simeq& \iota_{N*}a_{N}^{!}({\mathbb C}_{\{pt\}})\\
 &\simeq& \iota_{N*}{\mathbb C}_{N}[n].
 \end{eqnarray*}
By applying the exact functor
$\iota_{M*}$,
where $\iota_{M}\colon M\hookrightarrow L$
is a closed embedding,
we have the desired result.
\end{proof}

Recall that
${\mathcal S}\in{\rm Mod}({\mathbb C}_{X})$
is 
called
a locally constant sheaf of finite rank on $X$
if 
for any $x\in X$,
there exists
$l\in{\mathbb Z}_{\geq 0}$
such that 
we have
${\mathcal S}\simeq{\mathbb C}^{l}_{X}$
on a sufficiently small open neighborhood of $x$
in $X$.
For the convenience, we shall often use the following terminology:
\begin{Definition}
	Let ${\mathcal S}\in {\rm Mod}({\mathbb C}_{X})$ and $\iota_{K}\colon K \hookrightarrow X$ a closed subset.
	Then,
	we call ${\mathcal S}$ 
	a
	\textit{locally constant sheaf of finite rank supported in $K$}
	if $\operatorname{supp}({\mathcal S})\subset K$
	and
	$\iota_{K}^{-1}{\mathcal S}$
	is a locally constant sheaf of finite rank on $K$.
\end{Definition}

We need the following two lemmas,
which show that the dimension of the space of the global sections of a locally constant sheaf supported in $K$ is bounded by the dimension of its stalk.

\begin{Lemma}
\label{Gamma<x}
	Let ${\mathcal S}$ be a locally constant sheaf on a connected topological space $X$.
	Then, for any $x\in X$,
	we have
	$$
	\dim \Gamma (X; {\mathcal S})\leq 
	\dim{\mathcal S}_{x},
	$$
	where ${\mathcal S}_{x}$ is the germ of ${\mathcal S}$ at $x$.
\end{Lemma}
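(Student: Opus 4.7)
The plan is to show that the germ map $\rho_x \colon \Gamma(X;{\mathcal S}) \to {\mathcal S}_x$, $s \mapsto s_x$, is injective; once this is done, the inequality $\dim\Gamma(X;{\mathcal S}) \leq \dim{\mathcal S}_x$ is immediate since $\rho_x$ is ${\mathbb C}$-linear.

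To prove injectivity I would use the standard open–closed argument that exploits the connectedness of $X$. Given $s \in \Gamma(X;{\mathcal S})$ with $s_x = 0$, set
\begin{equation*}
U := \{y \in X \mid s_y = 0\} \subset X.
\end{equation*}
The subset $U$ is open by the very definition of the stalk: if $s_y = 0$, then $s$ vanishes on some open neighborhood of $y$, and hence $s_z = 0$ for all $z$ in that neighborhood. Next I would show $U$ is closed. Pick $y \in \overline{U}$ and, using that ${\mathcal S}$ is locally constant, choose a connected open neighborhood $V \ni y$ together with an isomorphism ${\mathcal S}|_V \simeq {\mathbb C}_V^{\,l}$ for some $l \in {\mathbb Z}_{\geq 0}$. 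Under this trivialization $s|_V$ corresponds to a locally constant, hence constant, ${\mathbb C}^l$-valued function on the connected set $V$. Because $y \in \overline{U}$ the set $V \cap U$ is nonempty, and at any point of $V \cap U$ this constant function vanishes, so $s|_V = 0$; in particular $s_y = 0$ and $y \in U$.

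Thus $U$ is both open and closed. Since $x \in U$ and $X$ is connected, $U = X$, which means $s = 0$ as a global section. This proves $\ker \rho_x = 0$, and therefore
\begin{equation*}
\dim \Gamma(X;{\mathcal S}) \leq \dim {\mathcal S}_x,
\end{equation*}
as claimed. No step here looks genuinely hard; the only point requiring care is to pick the neighborhood $V$ in the closedness argument to be connected (and small enough to trivialize ${\mathcal S}$), so that ``locally constant'' on $V$ really forces ``constant'' on $V$.
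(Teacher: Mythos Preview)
Your proof is correct and follows exactly the paper's approach: the paper's own proof is the single line ``It is clear that the natural map $\Gamma(X;{\mathcal S}) \to {\mathcal S}_x$ is injective,'' and you have simply supplied the standard open--closed argument behind that claim. One small technical point: your closedness step tacitly assumes $X$ is locally connected (so that a \emph{connected} trivializing $V$ exists); to cover an arbitrary connected $X$, argue instead that the complement $\{y : s_y \neq 0\}$ is open, since a locally constant function nonzero at $y$ is nonzero on a neighborhood of $y$.
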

\begin{proof}
	It is clear that the natural map 
	$\Gamma (X; {\mathcal S})\to 
	{\mathcal S}_{x}$
	is injective.
\end{proof}

\begin{Lemma}
\label{<ks}
	Let $K\subset X$ be a connected closed subset,
	${\mathcal S}\in {\rm Mod}({\mathbb C}_{X})$ a locally constant sheaf supported in $K$.
	Then, for any $x\in K$, we have 
	$$
	\dim \Gamma(X; {\mathcal S}) \leq 
	\dim{\mathcal S}_{x}.
	$$
\end{Lemma}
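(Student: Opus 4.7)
The plan is to reduce this to Lemma \ref{Gamma<x} by pulling everything back to the connected set $K$ where the sheaf actually lives. Since $\operatorname{supp}({\mathcal S}) \subset K$, Lemma \ref{gammaKS} gives the identification $\Gamma(X;{\mathcal S}) = \Gamma(K;\iota_K^{-1}{\mathcal S})$, so the problem is transferred from $X$ to $K$.

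Next I would invoke the hypothesis that $\iota_K^{-1}{\mathcal S}$ is locally constant of finite rank on $K$, combined with the fact that $K$ is connected, to apply Lemma \ref{Gamma<x} directly on $K$. This yields
\begin{equation*}
\dim \Gamma(K;\iota_K^{-1}{\mathcal S}) \leq \dim (\iota_K^{-1}{\mathcal S})_x
\end{equation*}
for the chosen point $x \in K$.

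Finally I would observe that for $x \in K$ the stalk is preserved under the closed embedding $\iota_K$, i.e., $(\iota_K^{-1}{\mathcal S})_x \simeq {\mathcal S}_x$, and chain the three identifications together to conclude. There is essentially no obstacle here; the only thing to be mildly careful about is checking that the topological space $K$ (as a closed subspace of a good topological space $X$) still satisfies whatever connectedness/niceness hypothesis is implicit in Lemma \ref{Gamma<x}, but connectedness is given by assumption and Lemma \ref{Gamma<x} only requires connectedness of the underlying space.
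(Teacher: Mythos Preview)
Your proposal is correct and follows essentially the same approach as the paper: use Lemma~\ref{gammaKS} to identify $\Gamma(X;{\mathcal S})$ with $\Gamma(K;\iota_K^{-1}{\mathcal S})$, then apply Lemma~\ref{Gamma<x} on the connected space $K$. The paper's proof is just these two sentences; your added remark about $(\iota_K^{-1}{\mathcal S})_x \simeq {\mathcal S}_x$ for $x\in K$ is the obvious implicit identification.
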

\begin{proof}
	By Lemma \ref{gammaKS},
	we have $\Gamma(X; {\mathcal S})=\Gamma(K; \iota_{K}^{-1}{\mathcal S})$.
	Thus,
	the lemma follows from Lemma \ref{Gamma<x}.
\end{proof}

\subsection{${\mathcal D}_{X}$-module}
In this subsection,
we briefly review some properties of ${\mathcal D}$-modules,
which is used in later sections.
We mainly refer to
\cite{K},
\cite{Kas},
and
\cite{SKK}.

Let $X$ be a
$d_{X}$-dimensional complex manifold.
We write 
${\mathcal O}_{X}$ and ${\mathcal D}_{X}$
for
the sheaves of holomorphic functions
and
holomorphic differential operators 
with finite order,
respectively,
on $X$.
For a closed $d_{Y}$-dimensional submanifold $Y\subset X$,
recall that the sheaf ${\mathcal B}_{Y|X}$ is defined by
\begin{align}
{\mathcal B}_{Y|X}:={\mathbb R}\Gamma_{Y}({\mathcal O}_{X})[d_{X}-d_{Y}].
\label{BYX}
\end{align}
In particular,
we have
${\mathcal B}_{X|X}={\mathcal O}_{X}$
in the case $Y=X$.
Note that
the complex
${\mathbb R}\Gamma_{Y}({\mathcal O}_{X})[d_{X}-d_{Y}]$
is concentrated at degree zero
\cite[Prop\:6.2.2]{Sato-II}.
Namely,
we have
${\mathbb R}^{k}\Gamma_{Y}({\mathcal O}_{X})
=0$
for
$k\neq d_{X}-d_{Y}$.

If $X$ is a complexification of some real analytic manifold $M$,
the sheaf
${\mathcal B}_{M}$ of Sato's hyperfunctions is defined by
\begin{align}
{\mathcal B}_{M}:=
{\mathbb R}\Gamma_{M}({\mathcal O}_{X})[d_{X}]
\otimes_{{\mathbb Z}_{M}}
or_{M},
\label{BM}
\end{align}
where 
$or_{M}$ is the orientation sheaf.
Note that the complex
${\mathbb R}\Gamma_{M}({\mathcal O}_{X})[d_{X}]$
is concentrated at degree zero
\cite[Prop\:7.2.2]{Sato-II}
and that
${\mathcal B}_{M}|_{U}={\mathcal B}_{U}$
for an open subset $U\subset M$.

Let
${\mathcal F}$
be the order filtration of
$ {\mathcal D}_{X}$.
The associated graded ring
$gr_{\mathcal F}({\mathcal D}_{X})$
of ${\mathcal D}_{X}$ with respect to ${\mathcal F}$
is defined
by
$gr_{\mathcal F}({\mathcal D}_{X}):=\bigoplus_{j\in{\mathbb Z}_{\geq 0}}{\mathcal F}_{j}({\mathcal D}_{X})/{\mathcal F}_{j-1}({\mathcal D}_{X})$.
Note that
there exists an injection 
$\pi^{-1}(gr_{\mathcal F}({\mathcal D}_{X}))\hookrightarrow{\mathcal O}_{T^{*}X}$,
where
$\pi\colon T^{*}X\to X$
is the natural projection.
For a $gr_{\mathcal F}({\mathcal D}_{X})$-module $M$,
we set
\begin{align}
F(M):=
{\mathcal O}_{T^{*}X}
\otimes_{\pi^{-1}(gr_{\mathcal F}({\mathcal D}_{X}))}
\pi^{-1}(M).
\label{Ffunctor}
\end{align}
Then
$F$ defines a functor from the category of 
$gr_{\mathcal F}({\mathcal D}_{X})$-modules to
that of 
${\mathcal O}_{T^{*}X}$-modules.
By the exactness of the inverse image functor
and
the right exactness of the tensor product,
$F$
is right exact.

Let ${\mathfrak M}$ be a coherent ${\mathcal D}_{X}$-module.
The characteristic variety 
${\rm Ch}({\mathfrak M})\subset T^{*}X$ of
${\mathfrak M}$
is defined
by
the support of
a
${\mathcal O}_{T^{*}X}$-module $F(gr_{{\mathcal F}_{\mathfrak M}}({\mathfrak M}))$,
where
${\mathcal F}_{\mathfrak M}$
is a good filtration of 
${\mathfrak M}$.
This definition does not depend on the choice of a good filtration.
For an irreducible closed analytic subset $V$ of
$T^{*}X$,
we write
${\rm mult}^{{\mathcal D}_{X}}_{V}({\mathfrak M})$
for
the multiplicity of 
${\mathfrak M}$
along $V$ \cite[Prop.\:1.8.2]{Bjork},
which is given by
$$
{\rm mult}_{V}^{{\mathcal D}_{X}}({\mathfrak M})
:=
{\rm mult}_{V}^{{\mathcal O}_{T^{*}X}}
(
F(gr_{{\mathcal F}_{\mathfrak M}}({\mathfrak M}))).
$$
Here,
${\rm mult}_{V}^{{\mathcal O}_{T^{*}X}}(
F(gr_{{\mathcal F}_{\mathfrak M}}({\mathfrak M}))$
is the multiplicity 
of
an
${\mathcal O}_{T^{*}X}$-module
$F(gr_{{\mathcal F}_{\mathfrak M}}({\mathfrak M}))$
along
$V$
(See \cite[Sect.\:1.4 in Appx.\:V]{Bjork},
for example).
Note that if $V$ is not contained in ${\rm Ch}({\mathfrak M})$,
we have ${\rm mult}_{V}^{{\mathcal D}_{X}}({\mathfrak M})=0$.

\begin{Remark}
\label{mult-infty}
	It is known that if two holonomic ${\mathcal D}_{X}$-modules ${\mathfrak M}$ and ${\mathfrak M}'$ satisfy
	${\mathcal D}_{X}^{\infty}\otimes_{{\mathcal D}_{X}} {\mathfrak M}\simeq 
	{\mathcal D}_{X}^{\infty}\otimes_{{\mathcal D}_{X}} {\mathfrak M}'$,
	we have ${\rm mult}_{V}^{{\mathcal D}_{X}}({\mathfrak M}) ={\rm mult}_{V}^{{\mathcal D}_{X}}({\mathfrak M'})$,
	where ${\mathcal D}_{X}^{\infty}$ is 
	the sheaf of 
holomorphic differential operators 
of infinite order.
This is because the multiplicity of ${\mathfrak M}$ is determined by the perverse sheaf ${\mathbb R}{\mathcal Hom}_{{\mathcal D}_{X}}({\mathfrak M},{\mathcal O}_{X})\simeq {\mathbb R}{\mathcal Hom}_{{\mathcal D}_{X}^{\infty}}({\mathcal D}_{X}^{\infty}\otimes_{{\mathcal D}_{X}} {\mathfrak M},{\mathcal O}_{X})$
(see \cite[Sect.\:8.2]{K-index} or \cite[(7.23)]{SV}).
We use this fact in Appendix to prove
Corollary \ref{CorRC}, given below.
\end{Remark}

A coherent ${\mathcal D}_{X}$-module ${\mathfrak M}$
is called
holonomic 
if
the characteristic variety 
${\rm Ch}({\mathfrak M})\subset T^{*}X$
of
${\mathfrak M}$
is $d_{X}$-dimensional
or
${\mathfrak M}=0$.
We note that
if a coherent ${\mathcal D}_{X}$-module
${\mathfrak M}$ is nonzero,
we have
$\dim {\rm Ch}({\mathfrak M})\geq d_{X}$
by
the involutivity of
the characteristic variety 
\cite[Thm.\:5.3.2]{SKK}.
A stratification
$X=\bigsqcup_{\alpha\in A}X_{\alpha}$ of $X$
is called regular with respect to a holonomic ${\mathcal D}_{X}$-module ${\mathfrak M}$
if
the stratification
$X=\bigsqcup_{\alpha\in A}X_{\alpha}$
satisfies the 
regularity conditions of H. Whitney
\cite[(a),(b) in Sect.\:19]{Whitney}
and
${\rm Ch}({\mathfrak M})$
is contained in
the union 
$\bigsqcup_{\alpha\in A}T^{*}_{X_{\alpha}}X$
of the conormal bundles 
of each stratum
\cite[Def.\:(3.4)]{K75}.
For any holonomic ${\mathcal D}_{X}$-module
${\mathfrak M}$,
there exists
a regular stratification of $X$
with respect to
${\mathfrak M}$
\cite[Lem.\:(3.2)]{K75}.

We use Kashiwara's
constructibility theorem of holonomic ${\mathcal D}_{X}$-modules:
\begin{Fact}[{\cite[Thms.\:(3.5) and (3.7)]{K75}}]
\label{holonomic}
Let 
${\mathfrak M}$
be a holonomic ${\mathcal D}_{X}$-module
and
$X=\bigsqcup_{\alpha\in A}X_{\alpha}$
a regular stratification of $X$
with respect to
${\mathfrak M}$.
Then, 
${\mathbb R}^{k}{\mathcal Hom}_{{\mathcal D}_{X}}({\mathfrak M},{\mathcal O}_{X})|_{X_{\alpha}}$
is a locally constant sheaf of finite rank on $X_{\alpha}$
for any
$\alpha\in A$
and
any $k\in{\mathbb Z}$.
Moreover,
if 
$Y:=\bigsqcup_{\beta \in B}X_{\beta}$
is a closed subset of $X$
for some subset $B\subset A$,
then
${\mathbb R}^{k}{\mathcal Hom}_{{\mathcal D}_{X}}({\mathfrak M},{\mathcal B}_{Y|X})|_{X_{\alpha}}$
is also
a locally constant sheaf of finite rank on $X_{\alpha}$
for any 
$\alpha\in A$
and
any $k\in{\mathbb Z}$.
\end{Fact}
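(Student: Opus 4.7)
The plan is to establish the constructibility of the solution complex ${\mathbb R}{\mathcal Hom}_{{\mathcal D}_{X}}({\mathfrak M},{\mathcal O}_{X})$ with respect to the given regular stratification, and then transfer the argument to the hyperfunction-valued variant. The two main tools are the Cauchy-Kowalevski-Kashiwara theorem, which computes the restriction of the solution complex of a coherent ${\mathcal D}_{X}$-module to a non-characteristic submanifold, and the Whitney regularity conditions on the stratification, which produce local topological triviality of the strata near any fixed point.

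For the first statement I would argue by induction on $d_{X}$, the base case $d_{X}=0$ being trivial since a holonomic ${\mathcal D}_{\{pt\}}$-module is just a finite-dimensional ${\mathbb C}$-vector space. For the inductive step, work locally near a point $x_{0}\in X_{\alpha}$. The hypothesis ${\rm Ch}({\mathfrak M})\subset \bigsqcup_{\beta\in A}T^{*}_{X_{\beta}}X$ ensures that any complex submanifold $Z$ sufficiently transverse to every stratum $X_{\beta}$ containing $X_{\alpha}$ in its closure is non-characteristic for ${\mathfrak M}$. Choose local holomorphic coordinates on $X$ in which $X_{\alpha}$ is a linear subspace and pick a smooth family of transverse complex slices $\{Z_{s}\}_{s\in X_{\alpha}\cap U}$ parametrized by a neighborhood of $x_{0}$ in $X_{\alpha}$. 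The Cauchy-Kowalevski-Kashiwara theorem identifies ${\mathbb R}{\mathcal Hom}_{{\mathcal D}_{X}}({\mathfrak M},{\mathcal O}_{X})|_{Z_{s}}$ with the solution complex of the non-characteristic pullback of ${\mathfrak M}$ to $Z_{s}$, which is a holonomic ${\mathcal D}_{Z_{s}}$-module on the strictly lower-dimensional manifold $Z_{s}$; the inductive hypothesis then gives finite-rank local constancy of its cohomology sheaves on each induced stratum. Whitney $(a)$ and $(b)$ regularity, together with (a holomorphic version of) Thom's first isotopy lemma, trivializes the stratified family $\{Z_{s}\}$ locally in $s$, and this trivialization transports the solution sheaves identically along $X_{\alpha}$, yielding finite-rank local constancy of ${\mathbb R}^{k}{\mathcal Hom}_{{\mathcal D}_{X}}({\mathfrak M},{\mathcal O}_{X})$ on $X_{\alpha}$.

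For the hyperfunction version, I would exploit the definition ${\mathcal B}_{Y|X}={\mathbb R}\Gamma_{Y}({\mathcal O}_{X})[d_{X}-d_{Y}]$ together with the fact that $Y=\bigsqcup_{\beta\in B}X_{\beta}$ is a union of strata. Ordering the strata by decreasing dimension and iteratively applying Lemma \ref{ZZZ} produces a spectral sequence whose $E_{1}$-pieces are, up to cohomological shifts, of the form ${\mathbb R}{\mathcal Hom}_{{\mathcal D}_{X}}({\mathfrak M},{\mathbb R}\Gamma_{X_{\beta}}({\mathcal O}_{X}))$. Each such piece is, up to an orientation twist and a shift, the ${\mathcal B}_{X_{\beta}|X}$-valued solution complex of ${\mathfrak M}$ and can be analyzed by exactly the same transverse-slicing plus Whitney argument applied now in a complex tubular neighborhood of $X_{\beta}$. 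Convergence of the spectral sequence, combined with exactness of the restriction to a stratum $X_{\alpha}$, then propagates finite-rank local constancy from each $E_{1}$-piece to the abutment ${\mathbb R}^{k}{\mathcal Hom}_{{\mathcal D}_{X}}({\mathfrak M},{\mathcal B}_{Y|X})|_{X_{\alpha}}$.

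The main obstacle will be the construction of the transverse family $\{Z_{s}\}$ that is simultaneously non-characteristic for ${\mathfrak M}$ \emph{and} compatible with the Whitney conditions across every stratum $X_{\beta}$ with $X_{\alpha}\subset \overline{X_{\beta}}$. This is precisely where the regularity of the stratification with respect to ${\mathfrak M}$ becomes indispensable: the containment of ${\rm Ch}({\mathfrak M})$ in the union of the conormal bundles restricts the forbidden transverse directions at each point to a nowhere-dense cone in the Grassmannian of complementary subspaces, so such a family exists in abundance, and Whitney $(b)$ regularity guarantees that it can be chosen consistently along $X_{\alpha}$. The hyperfunction case introduces only additional bookkeeping for the shifts and orientation sheaves, and no new analytic difficulty beyond the holomorphic case.
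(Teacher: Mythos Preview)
The paper does not prove this statement: it is recorded as a \emph{Fact} and attributed to Kashiwara \cite[Thms.\:(3.5) and (3.7)]{K75}, with no argument given in the paper itself. So there is no ``paper's own proof'' to compare your proposal against.

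That said, your outline is broadly faithful to Kashiwara's original argument in \cite{K75}: induction on $d_{X}$, transverse slicing through a point of $X_{\alpha}$, the Cauchy--Kowalevski--Kashiwara theorem to identify the restricted solution complex with the solution complex of the pullback module on the slice, and Whitney regularity to move the slice along $X_{\alpha}$. One point to be careful about is your invocation of ``a holomorphic version of Thom's first isotopy lemma'': what Kashiwara actually uses is not a holomorphic trivialization (which would be too strong) but rather the local topological triviality of the stratified set along $X_{\alpha}$, combined with the fact that the cohomology sheaves of the solution complex are already constructible on the slice by induction, so a topological isotopy suffices to identify the stalks. For the ${\mathcal B}_{Y|X}$ statement, Kashiwara's argument is closer to a direct repetition of the slicing argument than to the spectral sequence you propose, but your filtration-by-strata approach would also work once the ${\mathcal O}_{X}$ case is in hand.
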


\begin{Corollary}
\label{CorRC}
Let
$X=\bigsqcup_{\alpha\in A}X_{\alpha}$
be
a regular stratification of $X$
with respect to
a holonomic
${\mathcal D}_{X}$-module.
Suppose that
$X_{\alpha}$ 
is closed in $X$.
Then,
$
{\mathbb R}^{k}
{\mathcal Hom}_{{\mathcal D}_{X}}
({\mathfrak M},
{\mathcal B}_{X_{\alpha} | X})
$
is a locally constant sheaf of finite rank supported in $X_{\alpha}$
for any $k\in{\mathbb Z}$.
Moreover,
the rank of 
$
{\mathcal Hom}_{{\mathcal D}_{X}}
({\mathfrak M},
{\mathcal B}_{X_{\alpha} | X})
$
is not greater than
$\operatorname{mult}_{T_{X_{\alpha}}^{*}X}^{{\mathcal D}_{X}}({\mathfrak M})$.
\end{Corollary}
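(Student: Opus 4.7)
The corollary has three parts that I would address in turn: the support containment, the local constancy of the cohomology sheaves restricted to $X_{\alpha}$, and the numerical bound on the rank. The first two are formal consequences of the framework already set up; the substance lies in the rank inequality.

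The support claim is immediate from \eqref{BYX}: by construction ${\mathcal B}_{X_{\alpha}|X}={\mathbb R}\Gamma_{X_{\alpha}}({\mathcal O}_{X})[d_{X}-\dim X_{\alpha}]$ has support in $X_{\alpha}$, and the functor ${\mathbb R}^{k}{\mathcal Hom}_{{\mathcal D}_{X}}({\mathfrak M},-)$ preserves this support. For the local constancy, since $X_{\alpha}$ is by hypothesis closed in $X$ and is (trivially) a union of strata of the regular stratification, I would take $Y=X_{\alpha}$ and $B=\{\alpha\}$ in the second part of Fact \ref{holonomic}. The conclusion is that each restriction ${\mathbb R}^{k}{\mathcal Hom}_{{\mathcal D}_{X}}({\mathfrak M},{\mathcal B}_{X_{\alpha}|X})|_{X_{\alpha}}$ is locally constant of finite rank on $X_{\alpha}$, and combined with the support statement this gives the first assertion.

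For the rank bound, the plan is to reduce to a stalk computation at a point $x\in X_{\alpha}$ (permissible by the local constancy just established) and to pass from ${\mathcal D}_{X}$ to the sheaf ${\mathcal D}_{X}^{\infty}$ of infinite-order holomorphic differential operators. By Remark \ref{mult-infty} this passage does not affect $\operatorname{mult}_{T^{*}_{X_{\alpha}}X}^{{\mathcal D}_{X}}({\mathfrak M})$, and because ${\mathcal B}_{X_{\alpha}|X}$ is naturally a ${\mathcal D}_{X}^{\infty}$-module one obtains
\[
{\mathcal Hom}_{{\mathcal D}_{X}}({\mathfrak M},{\mathcal B}_{X_{\alpha}|X})
\;\simeq\;
{\mathcal Hom}_{{\mathcal D}_{X}^{\infty}}({\mathcal D}_{X}^{\infty}\otimes_{{\mathcal D}_{X}}{\mathfrak M},\,{\mathcal B}_{X_{\alpha}|X}).
\]
Near a generic point of the smooth Lagrangian $T^{*}_{X_{\alpha}}X$, the Sato--Kashiwara--Kawai structure theory for holonomic ${\mathcal D}_{X}^{\infty}$-modules yields a microlocal decomposition of ${\mathcal D}_{X}^{\infty}\otimes {\mathfrak M}$ whose simple constituents with characteristic variety $T^{*}_{X_{\alpha}}X$ are each microlocally equivalent to ${\mathcal B}_{X_{\alpha}|X}$, contributing exactly $1$ to both the multiplicity and to the rank of the Hom sheaf, while simple constituents with different characteristic variety contribute $0$ to the Hom sheaf near $x$. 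Summing the contributions gives $\text{rank}\leq \operatorname{mult}_{T^{*}_{X_{\alpha}}X}^{{\mathcal D}_{X}}({\mathfrak M})$.

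The main obstacle is the last step: cleanly justifying the microlocal reduction, establishing the ${\mathcal D}_{X}^{\infty}$-identification of Hom spaces, and bookkeeping the simple constituents so that \emph{each} contributes at most one to both sides without double counting. These ingredients are all available in the SKK framework, but assembling them into a sharp rank bound requires care; the straightforward parts (support and local constancy) are essentially formal, while the rank estimate is where the genuine microlocal content enters.
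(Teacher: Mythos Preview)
Your treatment of the support containment and of the local constancy via Fact~\ref{holonomic} matches the paper's: both are dispatched in a line, exactly as you do.

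For the rank bound your route genuinely differs from the paper's. You propose to pass to ${\mathcal D}_{X}^{\infty}$ and then invoke the SKK microlocal structure theory at a generic point of $T^{*}_{X_{\alpha}}X$, decomposing into simple constituents each microlocally equivalent to ${\mathcal B}_{X_{\alpha}|X}$. The paper instead works in the category of \emph{regular} holonomic ${\mathcal D}_{X}$-modules: it replaces ${\mathfrak M}$ by ${\mathfrak M}_{\rm reg}$ (Fact~\ref{fact-reg}), replaces ${\mathcal B}_{X_{\alpha}|X}$ by its regular counterpart ${\mathcal B}^{f}_{X_{\alpha}|X}$ (which is simple with $\operatorname{mult}_{T^{*}_{X_{\alpha}}X}=1$), and then runs a short induction on multiplicity: any nonzero $f\colon{\mathfrak M}_{\rm reg}\to{\mathcal B}^{f}_{X_{\alpha}|X}$ is surjective, $\operatorname{Ker}f$ has multiplicity one less, and left exactness of ${\mathcal Hom}(-,{\mathcal B}^{f}_{X_{\alpha}|X})_{x}$ together with $\dim{\mathcal Hom}({\mathcal B}^{f}_{X_{\alpha}|X},{\mathcal B}^{f}_{X_{\alpha}|X})_{x}=1$ gives the bound. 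The paper explicitly notes your ${\mathcal E}^{\mathbb R}_{X}$-style approach as an alternative (citing \cite[Thm.\:3.2.42]{K}), so your direction is legitimate; its advantage is conceptual directness at a generic microlocal point, while the paper's regular-holonomic argument trades the microlocal bookkeeping you flag as the ``main obstacle'' for the black boxes of regularization, giving a cleaner global induction entirely within the ${\mathcal D}_{X}$-module category.
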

\begin{proof}
The assertion for $
{\mathbb R}^{k}
{\mathcal Hom}_{{\mathcal D}_{X}}
({\mathfrak M},
{\mathcal B}_{X_{\alpha} | X})
$ is obvious by Fact \ref{holonomic}.
The assertion for the rank of 
${\mathcal Hom}_{{\mathcal D}_{X}}
({\mathfrak M},
{\mathcal B}_{X_{\alpha} | X})$
is proved in Appendix
\ref{Reg-mult} 
because we use a different method from this section.
\end{proof}

\begin{Remark}
Although we use the theory of regular holonomic ${\mathcal D}_{X}$-modules in this article (see Appendix \ref{Reg-mult}),
	Corollary \ref{CorRC} can be proved by using the theory of ${\mathcal E}^{\mathbb R}_{X}$-modules, see \cite[Thm.\:3.2.42]{K}.
	Moreover, if the support of ${\mathfrak M}$ is equal to $X_{\alpha}$ (and with the assumption of Corollary \ref{CorRC}), then the rank of ${\mathcal Hom}_{{\mathcal D}_{X}}({\mathfrak M},{\mathcal B}_{X_{\alpha}|X})$ is equal to $\operatorname{mult}_{T_{X_{\alpha}}^{*}X}^{{\mathcal D}_{X}}({\mathfrak M})$ by \cite[Prop.\:3.9]{K75}.
	\end{Remark}

\section{Upper bound of the dimensions of the Sato hyperfunction solution spaces
}
\label{Holonomy-dimension}
In this section,
we give
an
upper bound of the dimensions of the Sato hyperfunction solution spaces.
First,
we estimate the dimension of the space of solutions
whose supports are contained in 
one closed stratum $X_{\alpha}$.
\begin{Proposition}
\label{holonomy-dimension}
Let $M$ be a real analytic manifold,
$X$ a complexification of $M$,
and
$X=\bigsqcup_{\alpha\in A}X_{\alpha}$
a regular stratification of $X$
with respect to 
a holonomic ${\mathcal D}_{X}$-module ${\mathfrak M}$.
Suppose that 
$X_{\alpha}$ is a closed connected subset $X$ 
and that a subset
$M_{\alpha}'$ of $M\cap X_{\alpha}$
is a closed submanifold of $X_{\alpha}$.
Then,
${\mathcal Hom}_{{\mathcal D}_{X}}
(
{\mathfrak M},
\Gamma_{M_{\alpha}'}(
{\mathcal B}_{M}
))$
is a locally constant sheaf
supported in $M_{\alpha}'$
whose rank is not greater than
$\operatorname{mult}_{T_{X_{\alpha}}^{*}X}^{{\mathcal D}_{X}}({\mathfrak M})$.
\end{Proposition}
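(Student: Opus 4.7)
The plan is to rewrite $\Gamma_{M'_\alpha}({\mathcal B}_M)$ in terms of the sheaf ${\mathcal B}_{X_\alpha|X}$, to which Corollary \ref{CorRC} applies directly, and then to transfer the locally constant structure from $X_\alpha$ down to $M'_\alpha$ via Lemma \ref{MN}. First, because ${\mathcal B}_M$ is flabby and $M'_\alpha\subset M$ is closed, $\Gamma_{M'_\alpha}({\mathcal B}_M)\simeq{\mathbb R}\Gamma_{M'_\alpha}({\mathcal B}_M)$. Combining \eqref{BM}, Lemma \ref{KK'} applied to $M'_\alpha\subset M$, and then \eqref{BYX} together with Lemma \ref{KK'} applied to $M'_\alpha\subset X_\alpha$, I rewrite this as
\[
\Gamma_{M'_\alpha}({\mathcal B}_M) \simeq {\mathbb R}\Gamma_{M'_\alpha}({\mathcal O}_X)[d_X]\otimes or_M \simeq {\mathbb R}\Gamma_{M'_\alpha}({\mathcal B}_{X_\alpha|X})[d_{X_\alpha}]\otimes or_M.
\]

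Since ${\mathbb R}\Gamma_{M'_\alpha}(-)\simeq{\mathbb R}{\mathcal Hom}_{{\mathbb C}_X}({\mathbb C}_{M'_\alpha},-)$ commutes with ${\mathbb R}{\mathcal Hom}_{{\mathcal D}_X}({\mathfrak M},-)$, and $or_M$ is a rank-$1$ local system, applying ${\mathbb R}{\mathcal Hom}_{{\mathcal D}_X}({\mathfrak M},-)$ to the above identity yields
\[
{\mathbb R}{\mathcal Hom}_{{\mathcal D}_X}\bigl({\mathfrak M},\Gamma_{M'_\alpha}({\mathcal B}_M)\bigr) \simeq {\mathbb R}\Gamma_{M'_\alpha}\Bigl({\mathbb R}{\mathcal Hom}_{{\mathcal D}_X}\bigl({\mathfrak M},{\mathcal B}_{X_\alpha|X}\bigr)\Bigr)[d_{X_\alpha}]\otimes or_M.
\]
By Fact \ref{holonomic} and Corollary \ref{CorRC}, each cohomology sheaf ${\mathcal H}^q:={\mathbb R}^q{\mathcal Hom}_{{\mathcal D}_X}({\mathfrak M},{\mathcal B}_{X_\alpha|X})$ is locally constant of finite rank supported in $X_\alpha$, and the rank of ${\mathcal H}^0$ is bounded by $\operatorname{mult}_{T^*_{X_\alpha}X}^{{\mathcal D}_X}({\mathfrak M})$.

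Set $m'_\alpha=\dim_{\mathbb R}M'_\alpha$. Because $M$ is totally real in $X$ and $X_\alpha$ is a complex submanifold of $X$, the inclusion $M'_\alpha\subset M\cap X_\alpha$ forces $M'_\alpha$ to be totally real in $X_\alpha$, whence $m'_\alpha\le d_{X_\alpha}$. Applying Lemma \ref{MN} to $M'_\alpha\subset X_\alpha\subset X$ (real dimensions $m'_\alpha$ and $2d_{X_\alpha}$), ${\mathbb R}\Gamma_{M'_\alpha}$ of any locally constant sheaf of finite rank supported in $X_\alpha$ is locally isomorphic to its restriction to $M'_\alpha$ placed in cohomological degree $2d_{X_\alpha}-m'_\alpha$. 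Consequently the Grothendieck spectral sequence
\[
E_2^{p,q}={\mathbb R}^p\Gamma_{M'_\alpha}({\mathcal H}^q) \;\Longrightarrow\; {\mathbb R}^{p+q}\Gamma_{M'_\alpha}\bigl({\mathbb R}{\mathcal Hom}_{{\mathcal D}_X}({\mathfrak M},{\mathcal B}_{X_\alpha|X})\bigr)
\]
degenerates locally into the single column $p=2d_{X_\alpha}-m'_\alpha$.

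Taking the zeroth cohomology of the shifted right-hand side corresponds to total degree $d_{X_\alpha}$, i.e.\ to $q=m'_\alpha-d_{X_\alpha}$. If $m'_\alpha<d_{X_\alpha}$, this $q$ is negative and the sheaf is $0$, which trivially satisfies the claim. If $m'_\alpha=d_{X_\alpha}$, then $q=0$, and locally
\[
{\mathcal Hom}_{{\mathcal D}_X}\bigl({\mathfrak M},\Gamma_{M'_\alpha}({\mathcal B}_M)\bigr) \simeq \iota_{M'_\alpha*}\iota_{M'_\alpha}^{-1}({\mathcal H}^0)\otimes or_M,
\]
a locally constant sheaf supported in $M'_\alpha$ whose rank equals that of ${\mathcal H}^0$, hence is bounded by $\operatorname{mult}_{T^*_{X_\alpha}X}^{{\mathcal D}_X}({\mathfrak M})$. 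The most delicate step is the bookkeeping of degree shifts that isolates the column $q=m'_\alpha-d_{X_\alpha}$; the geometric observation that $M'_\alpha$ is automatically totally real in $X_\alpha$ is crucial, as it rules out contributions from ${\mathcal H}^q$ with $q>0$, whose ranks Corollary \ref{CorRC} does not control.
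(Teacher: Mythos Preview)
Your proof is correct and follows essentially the same route as the paper's: you rewrite $\Gamma_{M'_\alpha}({\mathcal B}_M)$ in terms of ${\mathcal B}_{X_\alpha|X}$ via Lemma~\ref{KK'}, commute ${\mathbb R}\Gamma_{M'_\alpha}$ with ${\mathbb R}{\mathcal Hom}_{{\mathcal D}_X}({\mathfrak M},-)$, invoke Corollary~\ref{CorRC} for the cohomology sheaves ${\mathcal H}^q$, and then use Lemma~\ref{MN} to collapse the Grothendieck spectral sequence to the single column $p=2d_{X_\alpha}-m'_\alpha$, arriving at the same dichotomy \eqref{HomC}. The only cosmetic differences are that you keep $or_M$ explicitly (the paper drops it since the statement is local) and that you spell out the totally real reason for $m'_\alpha\le d_{X_\alpha}$, which the paper states without elaboration.
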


Before proving,
we recall the Grothendieck spectral sequence.
\begin{Fact}[{\cite[Thm.\:2.4.1]{Grothendieck-Tohoku}}]
\label{spectral}
Let
${\mathcal C},{\mathcal C}',{\mathcal C}''$
be abelian categories,
$F\colon {\mathcal C}\to{\mathcal C}'$
and
$G\colon{\mathcal C}'\to{\mathcal C}''$
left exact functors.
Suppose that
$F$
takes injective objects of ${\mathcal C}$
to injective objects of ${\mathcal C}'$.
Then for any $A\in {\mathcal C}$,
we have a spectral sequence 
$$
E^{p,q}_{2}={\mathbb R}^{p}G\circ{\mathbb R}^{q}F(A)
\Rightarrow
{\mathbb R}^{p+q}(G\circ F)(A).
$$
\end{Fact}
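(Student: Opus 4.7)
The plan is the classical Cartan--Eilenberg double-complex argument: resolve $A$ by injectives in ${\mathcal C}$, push through $F$ to obtain a complex of injectives in ${\mathcal C}'$, form a Cartan--Eilenberg resolution of that complex by injectives, apply $G$, and read off the desired spectral sequence from the two standard spectral sequences of the resulting first-quadrant double complex.

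First I would fix an injective resolution $0\to A\to I^\bullet$ in ${\mathcal C}$ and consider $F(I^\bullet)$ in ${\mathcal C}'$. The hypothesis has two consequences. First, each $F(I^p)$ is injective in ${\mathcal C}'$, hence $G$-acyclic. Second, since every injective object is acyclic for every left-exact functor, $I^\bullet$ is also a $(G\circ F)$-acyclic resolution of $A$. Together with the standard identification ${\mathbb R}^{q}F(A)=H^{q}(F(I^\bullet))$, this gives
\[
H^{n}\bigl(G(F(I^\bullet))\bigr)={\mathbb R}^{n}(G\circ F)(A).
\]

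Next I would choose a Cartan--Eilenberg resolution $F(I^\bullet)\to J^{\bullet,\bullet}$: a first-quadrant double complex of injectives in ${\mathcal C}'$ whose columns $J^{p,\bullet}$ are injective resolutions of $F(I^p)$, with the induced vertical resolutions of the horizontal cycles, boundaries, and cohomology of $F(I^\bullet)$ all also injective. Applying $G$ produces the double complex $G(J^{\bullet,\bullet})$ in ${\mathcal C}''$; its total complex carries the two canonical filtrations yielding two spectral sequences that converge to the same abutment.

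Filtering by column degree gives a spectral sequence whose $E_{1}^{p,q}$ equals the vertical cohomology $H^{q}_{v}(G(J^{p,\bullet}))={\mathbb R}^{q}G(F(I^p))$, which vanishes for $q>0$ since $F(I^p)$ is injective hence $G$-acyclic. The sequence therefore degenerates, and the abutment is identified with $H^{p+q}(G(F(I^\bullet)))={\mathbb R}^{p+q}(G\circ F)(A)$. Filtering by row degree gives a second spectral sequence; the Cartan--Eilenberg splittings (the short exact sequences of horizontal cycles and boundaries are termwise split at each fixed vertical level by injectivity) force $G$ to commute with horizontal cohomology, so $E_{1}^{p,q}=G(H^{p}_{h}J^{\bullet,q})$, with $H^{p}_{h}J^{\bullet,\bullet}$ a vertical injective resolution of $H^{p}(F(I^\bullet))={\mathbb R}^{p}F(A)$. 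Taking vertical cohomology then gives $E_{2}^{p,q}={\mathbb R}^{q}G({\mathbb R}^{p}F(A))$, and relabeling indices yields the desired Grothendieck spectral sequence ${\mathbb R}^{p}G({\mathbb R}^{q}F(A))\Rightarrow {\mathbb R}^{p+q}(G\circ F)(A)$. The main technical obstacle is the construction and splitting properties of the Cartan--Eilenberg resolution of a bounded-below complex in an abelian category with enough injectives, which must be invoked to ensure that the induced resolutions of horizontal cycles, boundaries, and cohomology are themselves injective, so that applying the additive functor $G$ preserves the horizontal cohomology computation.
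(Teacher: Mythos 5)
The paper does not prove this statement at all: it is quoted as a Fact with a citation to Grothendieck's T\^ohoku paper (Thm.~2.4.1 there), so there is no internal argument to compare yours against. Your Cartan--Eilenberg double-complex proof is the standard one and is correct: the first (column) filtration degenerates because each $F(I^{p})$ is injective, hence $G$-acyclic, identifying the abutment with $H^{n}(G(F(I^{\bullet})))={\mathbb R}^{n}(G\circ F)(A)$ (which, as a side remark, holds simply because $I^{\bullet}$ is an injective resolution of $A$ and so computes ${\mathbb R}^{\bullet}(G\circ F)$ by definition --- the detour through ``$(G\circ F)$-acyclic resolutions'' is unnecessary); the second (row) filtration gives the $E_{2}$-page via the splitting of the cycle/boundary sequences in a Cartan--Eilenberg resolution, which lets the additive functor $G$ pass through horizontal cohomology. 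The only hypotheses you use beyond those stated are that ${\mathcal C}$ and ${\mathcal C}'$ have enough injectives, which is implicit in the statement (the derived functors must exist) and is satisfied in the paper's application (categories of sheaves and of ${\mathcal D}_{X}$-modules). The index relabeling at the end is the usual bookkeeping between the two conventions for the second spectral sequence of a double complex and is handled correctly.
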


\begin{proof}[Proof of Proposition \ref{holonomy-dimension}.]
It is clear that the support of 
${\mathcal Hom}_{{\mathcal D}_{X}}
(
{\mathfrak M},
\Gamma_{M_{\alpha}'}(
{\mathcal B}_{M}
))$
is contained in $M_{\alpha}'$.
Therefore,
it is sufficient to prove that
for any $x_{\alpha} \in M_{\alpha}'$,
there exists an isomorphism 
$$
{\mathcal Hom}_{{\mathcal D}_{X}}
(
{\mathfrak M},
\Gamma_{M_{\alpha}'}(
{\mathcal B}_{M}
))
\simeq 
{\mathbb C}_{M_{\alpha}'}^{l}
$$
on a sufficiently small open neighborhood $U_{\alpha}\subset X$ of
$x_{\alpha}\in M_{\alpha}'$
satisfying $l\leq 
\operatorname{mult}_{T_{X_{\alpha}}^{*}X}^{{\mathcal D}_{X}}({\mathfrak M})$.
Corollary \ref{CorRC} implies that
there exists an isomorphism
\begin{eqnarray}
{\mathbb R}^{k}{\mathcal Hom}_{{\mathcal D}_{X}}
(
{\mathfrak M},
{\mathcal B}_{X_{\alpha}|X}
)
\simeq
{\mathbb C}^{l_{k}}_{X_{\alpha}}
\label{RC}
\end{eqnarray}
for some $l_{k}\in{\mathbb Z}_{\geq 0}$
on a sufficiently small open neighborhood $U_{\alpha}$ of
$x_{\alpha}\in X_{\alpha}$
satisfying 
\begin{eqnarray}
\label{l0dim}
l_{0}\leq 
\operatorname{mult}_{T_{X_{\alpha}}^{*}X}^{{\mathcal D}_{X}}({\mathfrak M}).
\end{eqnarray}
Moreover,
we have a chain of isomorphisms on $U_{\alpha}$
\begin{align}
{\mathbb R}\Gamma_{M_{\alpha}'}{\mathbb R}{\mathcal Hom}_{{\mathcal D}_{X}}({\mathfrak M},{\mathcal B}_{X_{\alpha}|X})[d_{X_{\alpha}}]
&\simeq
{\mathbb R}{\mathcal Hom}_{{\mathcal D}_{X}}
({\mathfrak M},{\mathbb R}\Gamma_{M_{\alpha}'}({\mathcal B}_{X_{\alpha}|X}))[d_{X_{\alpha}}]\nonumber\\
&\simeq
{\mathbb R}{\mathcal Hom}_{{\mathcal D}_{X}}({\mathfrak M},{\mathbb R}\Gamma_{M_{\alpha}'}{\mathbb R}\Gamma_{X_{\alpha}}({\mathcal O}_{X}))[d_{X}]\nonumber\\
&\simeq
{\mathbb R}{\mathcal Hom}_{{\mathcal D}_{X}}({\mathfrak M},{\mathbb R}\Gamma_{M_{\alpha}'}{\mathbb R}\Gamma_{M}({\mathcal O}_{X}))[d_{X}]\nonumber\\
&\simeq
{\mathbb R}{\mathcal Hom}_{{\mathcal D}_{X}}({\mathfrak M},{\mathbb R}\Gamma_{M_{\alpha}'}({\mathcal B}_{M})).
\label{isoms}
\end{align}
Here,  
we omit the orientation sheaf
because the isomorphisms are local.
For the first isomorphism,
see \cite[Thm.\:7.9 in Appx.\:II]{Bjork}.
The second and fourth isomorphisms follow from
\eqref{BYX} and
\eqref{BM},
respectively.
In the third isomorphism,
we have used 
$M_{\alpha}'\cap X_{\alpha}=M_{\alpha}'=M_{\alpha}'\cap M$
and
Lemma \ref{KK'}.
By the definition,
we have
\begin{align}
H^{0}
(
{\mathbb R}{\mathcal Hom}_{{\mathcal D}_{X}}({\mathfrak M},{\mathbb R}\Gamma_{M_{\alpha}'}({\mathcal B}_{M}))
)
\simeq
{\mathcal Hom}_{{\mathcal D}_{X}}({\mathfrak M},\Gamma_{M_{\alpha}'}({\mathcal B}_{M})).
\label{R0}
\end{align}
Here,
we write
$
H^{0}
$
for the $0$-th cohomology functor.
By Fact
\ref{spectral},
there exists a
Grothendieck spectral sequence
\begin{equation}
E^{p,q}_{2}\simeq 
{\mathbb R}^{p}\Gamma_{M_{\alpha}'}
({\mathbb R}^{q}
{\mathcal Hom}_{{\mathcal D}_{X}}({\mathfrak M},{\mathcal B}_{X_{\alpha}|X})
)
\Rightarrow
H^{p+q}
(
{\mathbb R}\Gamma_{M_{\alpha}'}{\mathbb R}
{\mathcal Hom}_{{\mathcal D}_{X}}({\mathfrak M},{\mathcal B}_{X_{\alpha}|X})
).
\label{Grothendieck}
\end{equation}
Note that ${\mathcal Hom}_{{\mathcal D}_{X}}({\mathfrak M},*)$
takes injective objects 
of the category of ${\mathcal D}_{X}$-modules
to injective objects of ${\rm Mod}({\mathbb C}_{X})$
(\cite[Prop.\:6.21 in Appx.\:II]{Bjork}
or
\cite[Prop.\:2.4.6 (vii)]{Kas}).
Lemma \ref{MN} 
and
\eqref{RC}
imply
\begin{eqnarray}
E^{p,q}_{2}\simeq
{\mathbb R}^{p}\Gamma_{M_{\alpha}'}
({\mathbb C}^{l_{q}}_{X_{\alpha}})\simeq 
\begin{cases}
  {\mathbb C}_{M_{\alpha}'}^{l_{q}}& (p=2d_{X_{\alpha}}-\dim M_{\alpha}'), \\
    0 & (otherwise).
  \end{cases}\label{Epq}
\end{eqnarray}
Because
$X$ is a complexification of $M$,
we have
$\dim M_{\alpha}'\leq d_{X_{\alpha}}$.
Thus,
\eqref{isoms},
\eqref{R0},
\eqref{Grothendieck}
and
\eqref{Epq}
imply
\begin{eqnarray}
{\mathcal Hom}_{{\mathcal D}_{X}}({\mathfrak M},\Gamma_{M_{\alpha}'}({\mathcal B}_{M}))
\simeq
\begin{cases}
 {\mathbb C}_{M_{\alpha}'}^{l_{0}}& (\dim M_{\alpha}'=d_{X_{\alpha}}), \\
    0 & (\dim M_{\alpha}'\neq d_{X_{\alpha}})
  \end{cases}\label{HomC}
\end{eqnarray}
on a sufficiently small open neighborhood of $x_{\alpha}\in M_{\alpha}'$.
Therefore,
this completes the proof because
$l_{0}
\leq 
\operatorname{mult}_{T_{X_{\alpha}}^{*}X}^{{\mathcal D}_{X}}({\mathfrak M})
$
by
\eqref{l0dim}.
\end{proof}

We want to apply Proposition \ref{holonomy-dimension} to 
the case $M_{\alpha}'=M\cap X_{\alpha}$.
However, it is impossible
because
$M_{\alpha}:=M\cap X_{\alpha}$ may have a singular point
in general.
In order to overcome this,
we consider a stratification $M_{\alpha}=\bigsqcup_{j=1}^{J}M_{\alpha}^{(j)}$
such that $M_{\alpha}^{(j)}$ is a submanifold of $X_{\alpha}$.
In fact,
such a stratification locally exists 
by the theory of semianalytic sets.
\begin{Lemma}
\label{semiMalpha}
	Let $M$ be a real analytic manifold,
$X$ a complexification of $M$,
and
$X=\bigsqcup_{\alpha\in A}X_{\alpha}$
a regular stratification of $X$.
Set $M_{\alpha}:=X_{\alpha}\cap M$
for any $\alpha\in A$.
Then,
for any $\alpha\in A$ and
any relatively compact semianalytic subset $U_{\mathbb C}$ of $X$ (as a real analytic manifold),
there exists a 
stratification
$M_{\alpha}\cap U_{\mathbb C}=\bigsqcup_{j=1}^{J_{\alpha}} M_{\alpha}^{(j)}$
such that $J_{\alpha}<\infty$
and
$M_{\alpha}^{(j+1)}$ is a closed connected submanifold
of $(X_{\alpha}\cap U_{\mathbb C})\backslash \bigsqcup_{i=1}^{j}M_{\alpha}^{(i)}$.
\end{Lemma}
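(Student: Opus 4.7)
The plan is to realize $M_{\alpha}\cap U_{\mathbb C}$ as a relatively compact semianalytic subset of $X$ (viewed as a real analytic manifold) and then invoke \L{}ojasiewicz's stratification theorem for semianalytic sets. First I would observe that $X_{\alpha}$, being a locally closed complex submanifold of $X$ from a regular stratification, is (real) semianalytic in $X$, and that $M$ is a closed real analytic submanifold of $X$ and hence semianalytic. Intersecting with the relatively compact semianalytic set $U_{\mathbb C}$, I conclude that $M_{\alpha}\cap U_{\mathbb C}=X_{\alpha}\cap M\cap U_{\mathbb C}$ is a relatively compact semianalytic subset of $X$.

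Next I would apply the classical theorem of \L{}ojasiewicz: every relatively compact semianalytic subset of a real analytic manifold admits a \emph{finite} Whitney stratification by real analytic submanifolds. Refining each stratum into its connected components (still finitely many by relative compactness), one obtains a decomposition
\begin{equation*}
M_{\alpha}\cap U_{\mathbb C}=\bigsqcup_{j=1}^{J_{\alpha}}M_{\alpha}^{(j)},
\qquad J_{\alpha}<\infty,
\end{equation*}
where each $M_{\alpha}^{(j)}$ is a connected real analytic submanifold of $X$. Since $M_{\alpha}^{(j)}\subset X_{\alpha}$ and $X_{\alpha}$ is itself a real analytic submanifold of $X$, each $M_{\alpha}^{(j)}$ is automatically a real analytic submanifold of $X_{\alpha}$.

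Finally I would reorder the strata in nondecreasing order of dimension. The frontier condition of a Whitney stratification guarantees that, for each $j$, the topological frontier of $M_{\alpha}^{(j+1)}$ inside $M_{\alpha}\cap U_{\mathbb C}$ is contained in strata of strictly smaller dimension, and hence by our ordering in $\bigsqcup_{i=1}^{j}M_{\alpha}^{(i)}$. Because $M$ is closed in $X$, taking closure in $X_{\alpha}\cap U_{\mathbb C}$ rather than in $M_{\alpha}\cap U_{\mathbb C}$ does not add new limit points outside of $M_{\alpha}\cap U_{\mathbb C}$; this is the only mildly delicate bookkeeping point. It follows that $\overline{M_{\alpha}^{(j+1)}}\setminus M_{\alpha}^{(j+1)}\subset\bigsqcup_{i=1}^{j}M_{\alpha}^{(i)}$ in $X_{\alpha}\cap U_{\mathbb C}$, so $M_{\alpha}^{(j+1)}$ is a closed connected real analytic submanifold of $(X_{\alpha}\cap U_{\mathbb C})\setminus\bigsqcup_{i=1}^{j}M_{\alpha}^{(i)}$, which is what is required. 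The main obstacle is not conceptual but rather the bookkeeping to cite the correct version of \L{}ojasiewicz's theorem and to control the ambient space in which closures are taken; once the closedness of $M$ in $X$ is used to contain the frontier, the proof proceeds cleanly.
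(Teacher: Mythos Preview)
Your proposal is correct and takes essentially the same approach as the paper: both observe that $M_{\alpha}\cap U_{\mathbb C}$ is a relatively compact semianalytic set and then invoke the standard stratification theorem for such sets (the paper cites \cite[Prop.\:2.10]{semifinite}, which packages exactly the \L{}ojasiewicz-type result you describe). Your write-up simply makes explicit the dimension ordering and the use of the closedness of $M$ in $X$ to control frontiers in the ambient $X_{\alpha}\cap U_{\mathbb C}$, points that the paper leaves implicit in its one-line citation.
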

\begin{proof}
	Note that $M_{\alpha}$ is a semianalytic subset
	because $X_{\alpha}$ is a semianalytic subset of $X$.
	Thus,
	this lemma is a direct consequence of \cite[Prop.\:2.10]{semifinite}.
\end{proof}

\begin{Corollary}
\label{Cor-dim-Gamma}
Let $M$ be a real analytic manifold,
$X$ a complexification of $M$,
and
$X=\bigsqcup_{\alpha\in A}X_{\alpha}$
a regular stratification of $X$
with respect to 
a holonomic ${\mathcal D}_{X}$-module ${\mathfrak M}$.
Suppose that 
$X_{\alpha}$ is a closed connected subset of $X$
and set
$M_{\alpha}:=M\cap X_{\alpha}$.
Let $U_{\mathbb C}$ be a relatively compact semianalytic open subset of $X$
and $M_{\alpha}\cap U_{\mathbb C}=\bigsqcup_{j=1}^{J_{\alpha}} M_{\alpha}^{(j)}$
a stratification given in Lemma \ref{semiMalpha}.
Then,
we have
\begin{eqnarray*}
	\dim 
	\Gamma(U;
	{\mathcal Hom}_{{\mathcal D}_{X}}
(
{\mathfrak M},
\Gamma_{M_{\alpha}}(
{\mathcal B}_{M}
)))
\leq 
J_{\alpha}\cdot 
\operatorname{mult}_{T_{X_{\alpha}}^{*}X}^{{\mathcal D}_{X}}({\mathfrak M})
\end{eqnarray*}
where $U:=U_{\mathbb C}\cap M$.
\end{Corollary}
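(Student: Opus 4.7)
The plan is to induct on $j$ along the filtration $F_j := \bigsqcup_{i=1}^{j} M_\alpha^{(i)}$ of $M_\alpha \cap U_{\mathbb C}$, proving
\[
\dim \Gamma(U; {\mathcal Hom}_{{\mathcal D}_X}({\mathfrak M}, \Gamma_{F_j}({\mathcal B}_M))) \leq j \cdot \operatorname{mult}_{T_{X_\alpha}^* X}^{{\mathcal D}_X}({\mathfrak M}).
\]
Since $F_{J_\alpha} = M_\alpha \cap U_{\mathbb C}$, the sheaves $\Gamma_{M_\alpha}({\mathcal B}_M)$ and $\Gamma_{F_{J_\alpha}}({\mathcal B}_M)$ coincide over $U_{\mathbb C}$, so the case $j = J_\alpha$ yields the desired inequality. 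First I would verify by an easy induction, using the nesting property in Lemma \ref{semiMalpha}, that each $F_j$ is closed in $X_\alpha \cap U_{\mathbb C}$ and hence locally closed in $X$ (since $X_\alpha$ is closed in $X$ and $U_{\mathbb C}$ is open).

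For the inductive step, Lemma \ref{ZZZ} applied to $Z = F_j$ and $Z' = F_{j-1}$ gives the exact sequence
\begin{equation*}
0 \to \Gamma_{F_{j-1}}({\mathcal B}_M) \to \Gamma_{F_j}({\mathcal B}_M) \to \Gamma_{M_\alpha^{(j)}}({\mathcal B}_M),
\end{equation*}
and applying the left exact functors ${\mathcal Hom}_{{\mathcal D}_X}({\mathfrak M}, -)$ followed by $\Gamma(U; -)$ yields
\[
\dim \Gamma(U; {\mathcal Hom}({\mathfrak M}, \Gamma_{F_j}({\mathcal B}_M))) \leq \dim \Gamma(U; {\mathcal Hom}({\mathfrak M}, \Gamma_{F_{j-1}}({\mathcal B}_M))) + \dim \Gamma(U; {\mathcal Hom}({\mathfrak M}, \Gamma_{M_\alpha^{(j)}}({\mathcal B}_M))).
\]
The inductive step thus reduces to the single-stratum bound $\dim \Gamma(U; {\mathcal Hom}({\mathfrak M}, \Gamma_{M_\alpha^{(j)}}({\mathcal B}_M))) \leq \operatorname{mult}_{T_{X_\alpha}^* X}^{{\mathcal D}_X}({\mathfrak M})$.

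To prove this single-stratum bound, I would pass to the open subset $V_j := U_{\mathbb C} \setminus F_{j-1}$ of $X$, in which $M_\alpha^{(j)}$ becomes a closed submanifold of $X_\alpha \cap V_j$. The regular stratification restricts to a regular stratification of $V_j$ with respect to ${\mathfrak M}|_{V_j}$, and the multiplicity is a local invariant unchanged by open restriction, so Proposition \ref{holonomy-dimension} applied on $V_j$ yields that ${\mathcal Hom}_{{\mathcal D}_X}({\mathfrak M}, \Gamma_{M_\alpha^{(j)}}({\mathcal B}_M))|_{V_j}$ is a locally constant sheaf supported in $M_\alpha^{(j)}$ of rank at most $\operatorname{mult}_{T_{X_\alpha}^* X}^{{\mathcal D}_X}({\mathfrak M})$. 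Since the support is contained in $M_\alpha^{(j)} \subset V_j$, any global section over $U$ has stalks vanishing off $M_\alpha^{(j)}$ and is therefore determined by its restriction to $M_\alpha^{(j)}$; combined with connectedness of $M_\alpha^{(j)}$ (Lemma \ref{semiMalpha}) and Lemma \ref{Gamma<x}, this gives the desired bound. The main obstacle will be this final step: the stratum $M_\alpha^{(j)}$ is only locally closed, not closed, in $X_\alpha \cap U_{\mathbb C}$, so Lemma \ref{<ks} does not apply directly, and one must verify carefully the injectivity of the restriction map to $M_\alpha^{(j)}$ and the compatibility of Proposition \ref{holonomy-dimension} with the open restriction to $V_j$.
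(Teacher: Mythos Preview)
Your approach is correct and essentially the same as the paper's: both filter $M_\alpha$ via Lemma \ref{ZZZ} along the strata $M_\alpha^{(j)}$ and bound each single-stratum contribution by Proposition \ref{holonomy-dimension} after restricting to the open set on which that stratum is closed (the paper peels off the closed stratum $M_\alpha^{(1)}$ first and carries the remainder $\Gamma_{M_\alpha^1}$ to the complement $X^1$, while you filter upward by the closed sets $F_j$ and restrict for the top stratum $M_\alpha^{(j)}$---a cosmetic reorganization of the same induction). The obstacle you flag is resolved by the identity $\Gamma(U;{\mathcal Hom}_{{\mathcal D}_X}({\mathfrak M},\Gamma_{M_\alpha^{(j)}}({\mathcal B}_M)))\simeq \Gamma(U\cap V_j;{\mathcal Hom}_{{\mathcal D}_{V_j}}({\mathfrak M}|_{V_j},\Gamma_{M_\alpha^{(j)}}({\mathcal B}_{M\cap V_j})))$, which follows from $\Gamma_{M_\alpha^{(j)}}=\Gamma_{V_j}\Gamma_{M_\alpha^{(j)}}$ together with $\Gamma(U;\Gamma_{V_j}(\cdot))=\Gamma(U\cap V_j;\cdot)$ and the commutation of $\Gamma_{V_j}$ with ${\mathcal Hom}$ (this is precisely the paper's chain \eqref{Gamma=Gamma}); on $V_j$ the stratum $M_\alpha^{(j)}$ is closed, so Proposition \ref{holonomy-dimension} (applied to the connected component of $X_\alpha\cap V_j$ containing $M_\alpha^{(j)}$) and Lemma \ref{<ks} give the bound.
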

\begin{proof}
	Because we only consider the space of sections over $U$,
	we may assume that $X=U_{\mathbb C}$
	and $M=U$.
	Thus,
	$M_{\alpha}^{(j+1)}$ is a closed connected submanifold of $X_{\alpha}\backslash \bigsqcup_{i=1}^{j}M_{\alpha}^{(i)}$.
	For simplicity,
we set
\begin{eqnarray*}
X^{j}:=X\backslash \bigsqcup_{i=1}^{j}M_{\alpha}^{(i)},&&
X_{\alpha}^{j}:=X_{\alpha}\backslash \bigsqcup_{i=1}^{j}M_{\alpha}^{(i)},\\
M^{j}:=M\backslash \bigsqcup_{i=1}^{j}M_{\alpha}^{(i)},
&&
M^{j}_{\alpha}:=M_{\alpha}\backslash \bigsqcup_{i=1}^{j}M_{\alpha}^{(i)}.
\end{eqnarray*}
We use the convention $X^{0}:=X$, $X^{0}_{\alpha}:=X_{\alpha}$, $M^{0}:=M$, and $M_{\alpha}^{0}:=M_{\alpha}$.
In this notation, 
we have
\begin{eqnarray*}
\begin{array}{cccccc}
X                    &
\overset{open}{\supset} & 
X^{j}    &   
\overset{closed}{\supset}  & 
X^{j}_{\alpha}            
\\[4pt]
\rotatebox{90}{$\subset$} &
& 
\rotatebox{90}{$\subset$} &
&
\rotatebox{90}{$\subset$}
\\
M                    & 
\overset{open}{\supset}     & 
M^{j} = M\cap X^{j}     &
\overset{closed}{\supset}  &
M^{j}_{\alpha} = M^{j} \cap X_{\alpha}^{j}
&
\overset{closed}{\supset}
M^{(j+1)}_{\alpha}.
\end{array}
\end{eqnarray*}
Note that
$M_{\alpha}^{(j+1)} \subset M^{j}\cap X_{\alpha}^{j}$ is a closed connected submanifold of $X_{\alpha}^{j}$.

We want to prove the corollary by using the filtration by support (cf.\:\cite[(6.10)]{KS}) and Proposition \ref{holonomy-dimension}.
First, we treat the case of $\Gamma_{M_{\alpha}^{(1)}}({\mathcal B}_{M})$.
	Because $M_{\alpha}^{(1)}$ is closed in $M_{\alpha}=M_{\alpha}^{0}$,
	we have an exact sequence
\begin{eqnarray}
0\to
\Gamma_{M_{\alpha}^{(1)}}({\mathcal B}_{M})
\to
\Gamma_{M_{\alpha}}({\mathcal B}_{M})
\to
\Gamma_{M_{\alpha}^{1}}({\mathcal B}_{M})
\label{exactBBB}
\end{eqnarray}
by Lemma \ref{ZZZ}.
Applying the left exact functor
$\Gamma(M;
{\mathcal Hom}_{{\mathcal D}_{X}}(
{\mathfrak M},
\cdot))
$, we have
\begin{alignat*}{1}
0
&\to
\Gamma(M;
{\mathcal Hom}_{{\mathcal D}_{X}}(
{\mathfrak M},
\Gamma_{M_{\alpha}^{(1)}}
(
{\mathcal B}_{M}
)
)
)
\\
&\to
\Gamma(M;
{\mathcal Hom}_{{\mathcal D}_{X}}(
{\mathfrak M},
\Gamma_{M_{\alpha}}({\mathcal B}_{M})
))
\to
\Gamma(M;
{\mathcal Hom}_{{\mathcal D}_{X}}(
{\mathfrak M},
\Gamma_{M_{\alpha}^{1}}
(
{\mathcal B}_{M}
)
)),
\end{alignat*}
which is exact.
Therefore,
we have
\begin{eqnarray}
&&\dim
\Gamma(M;
{\mathcal Hom}_{{\mathcal D}_{X}}(
{\mathfrak M},
\Gamma_{M_{\alpha}}({\mathcal B}_{M})
))\label{dim0mult}\\
&\leq  &
\dim\Gamma(M;
{\mathcal Hom}_{{\mathcal D}_{X}}(
{\mathfrak M},
\Gamma_{M_{\alpha}^{(1)}}
(
{\mathcal B}_{M}
)
))+
\dim\Gamma(M;
{\mathcal Hom}_{{\mathcal D}_{X}}(
{\mathfrak M},
\Gamma_{M_{\alpha}^{1}}
(
{\mathcal B}_{M}
)
)).\nonumber
\end{eqnarray}
Because $M_{\alpha}^{(1)} \subset M\cap X_{\alpha}$ is a closed connected submanifold of $X_{\alpha}$,
we have
	\begin{eqnarray}
	\dim 
	\Gamma(M;
	{\mathcal Hom}_{{\mathcal D}_{X}}
(
{\mathfrak M},
\Gamma_{M_{\alpha}^{(1)}}(
{\mathcal B}_{M}
)))
\leq 
\operatorname{mult}_{T_{X_{\alpha}}^{*}X}^{{\mathcal D}_{X}}({\mathfrak M})
\label{dimGamma<mult}
	\end{eqnarray}
	by Lemma \ref{<ks}
	and
	Proposition \ref{holonomy-dimension}.
By \eqref{dim0mult} and \eqref{dimGamma<mult},
we have
\begin{eqnarray}
&&\dim
\Gamma(M;
{\mathcal Hom}_{{\mathcal D}_{X}}(
{\mathfrak M},
\Gamma_{M_{\alpha}}({\mathcal B}_{M})
))\label{dim0mult2} \\
&\leq  &
\operatorname{mult}_{T_{X_{\alpha}}^{*}X}^{{\mathcal D}_{X}}({\mathfrak M})
+
\dim\Gamma(M;
{\mathcal Hom}_{{\mathcal D}_{X}}(
{\mathfrak M},
\Gamma_{M_{\alpha}^{1}}
(
{\mathcal B}_{M}
)
)).\nonumber
\end{eqnarray}

We want to apply the same argument to the last term of \eqref{dim0mult2}.
For this end, we rewrite it.
We have a chain of isomorphisms
\begin{eqnarray}
&&
\Gamma(M;
{\mathcal Hom}_{{\mathcal D}_{X}}(
{\mathfrak M},
\Gamma_{M^{1}_{\alpha}}
(
{\mathcal B}_{M}
)
))
\nonumber\\
&\simeq &
\Gamma(M;
{\mathcal Hom}_{{\mathcal D}_{X}}(
{\mathfrak M},
\Gamma_{M^{1}}
\Gamma_{M^{1}_{\alpha}}
(
{\mathcal B}_{M}
)
))
\nonumber
\\
&\simeq &
\Gamma(M;
\Gamma_{M^{1}}
{\mathcal Hom}_{{\mathcal D}_{X}}(
{\mathfrak M},
\Gamma_{M^{1}_{\alpha}}
(
{\mathcal B}_{M}
)
))
\nonumber
\\
&\simeq &
\Gamma(M^{1};
{\mathcal Hom}_{{\mathcal D}_{X}}(
{\mathfrak M},
\Gamma_{{M}^{1}_{\alpha}}
(
{\mathcal B}_{M}
)
))
\nonumber
\\
&\simeq &
\Gamma(M^{1};
{\mathcal Hom}_{{\mathcal D}_{X^{1}}}(
{\mathfrak M}|_{X^{1}},
\Gamma_{M^{1}_{\alpha}}
(
{\mathcal B}_{M^{1}}
)
)),
\label{Gamma=Gamma}
\end{eqnarray}
where ${\mathfrak M}|_{X^{1}}$ is the restriction of ${\mathfrak M}$ to $X^{1}$.
In the first isomorphism,
we have used 
$M^{1}_{\alpha}=M^{1}\cap M^{1}_{\alpha}$
and
\cite[Prop.\:2.3.9 (ii)]{Kas}.
For the second isomorphism,
see \cite[(2.3.18)]{Kas}.
The third and last isomorphisms follow from the definitions.

Recall that
$M_{\alpha}^{(2)}$ is closed in $M_{\alpha}^{1}$.
Thus,
we have an exact sequence
\begin{eqnarray*}
0\to
\Gamma_{M_{\alpha}^{(2)}}({\mathcal B}_{M^{1}})
\to
\Gamma_{M_{\alpha}^{1}}({\mathcal B}_{M^{1}})
\to
\Gamma_{M_{\alpha}^{2}}({\mathcal B}_{M^{1}})
\end{eqnarray*}
by Lemma \ref{ZZZ}.
Applying the left exact functor
$\Gamma(M^{1};
{\mathcal Hom}_{{\mathcal D}_{X^{1}}}(
{\mathfrak M}|_{X^{1}},
\cdot))$,
we have
\begin{eqnarray}
&&\dim
\Gamma(M^{1};
{\mathcal Hom}_{{\mathcal D}_{X^{1}}}(
{\mathfrak M}|_{X^{1}},
\Gamma_{M^{1}_{\alpha}}
(
{\mathcal B}_{M^{1}}
)
))
\label{dim<dim+dimGamma} \\
&\leq &
\dim\Gamma(M^{1};
{\mathcal Hom}_{{\mathcal D}_{X^{1}}}(
{\mathfrak M}|_{X^{1}},
\Gamma_{M_{\alpha}^{2}}
(
{\mathcal B}_{M^{1}}
)
))
\nonumber\\
&&+
\dim
\Gamma(M^{1};
{\mathcal Hom}_{{\mathcal D}_{X^{1}}}(
{\mathfrak M}|_{X^{1}},
\Gamma_{M^{(2)}_{\alpha}}
(
{\mathcal B}_{M^{1}}
)
)).
\nonumber
\end{eqnarray}

We want to apply Proposition \ref{holonomy-dimension} to the last term of \eqref{dim<dim+dimGamma}.
For this end, we write $X_{\alpha}^{1}=\bigsqcup_{k\in K_{\alpha}}X^{1}_{\alpha,k}$ for the connected component decomposition of $X_{\alpha}^{1}$
and take $k_{0}\in K_{\alpha}$ such that $X_{\alpha,k_{0}}^{1}$ is the connected component of $X_{\alpha}^{1}$ containing $M_{\alpha}^{(2)}$.
We shall check the assumption of Proposition \ref{holonomy-dimension}.
It is clear that $M^{1}$ is a real analytic manifold,
$X^{1}$ is its complexification
and
$$X^{1}=\left(\bigsqcup_{\beta\neq \alpha}X_{\beta}\right)\sqcup
\left(\bigsqcup_{k\in K_{\alpha}}X^{1}_{\alpha,k}\right)$$
is a regular stratification of $X^{1}$
with respect to ${\mathfrak M}|_{X^{1}}$.
Moreover, 
$X_{\alpha,k_{0}}^{1}$ is a closed connected subset of $X^{1}$ because $X_{\alpha,k_{0}}^{1}$ is the connected component of a closed subset $X_{\alpha}^{1}\subset X^{1}$.
Moreover, $M_{\alpha}^{(2)}\subset M^{1}\cap X_{\alpha,k_{0}}^{1}$ is a closed connected submanifold of $X_{\alpha,k_{0}}^{1}$  by definition.
Thus,
we have
	\begin{eqnarray}
	\label{dim-mult-D-2}
\dim
\Gamma(M^{1};
{\mathcal Hom}_{{\mathcal D}_{X^{1}}}(
{\mathfrak M}|_{X^{1}},
\Gamma_{M^{(2)}_{\alpha}}
(
{\mathcal B}_{M^{1}}
)
))
\leq 
\operatorname{mult}_{T_{X_{\alpha,k_{0}}^{1}}^{*}X^{1}}^{{\mathcal D}_{X^{1}}}({\mathfrak M}|_{X^{1}})
	\end{eqnarray}
	by Lemma \ref{<ks}
	and
	Proposition \ref{holonomy-dimension}.
Moreover,
\begin{eqnarray}
\operatorname{mult}_{T_{X_{\alpha,k_{0}}^{1}}^{*}X^{1}}^{{\mathcal D}_{X^{1}}}({\mathfrak M}|_{X^{1}})
=
\operatorname{mult}_{T_{X_{\alpha}}^{*}X}^{{\mathcal D}_{X}}({\mathfrak M})
\label{mult=mult10}	
\end{eqnarray}
follows easily from the definition (because the multiplicity of an ${\mathcal O}$-module is defined by the length of it at a generic point, see \cite[Sect.\:2.6]{K} for example).
Thus,
\eqref{dim<dim+dimGamma},
\eqref{dim-mult-D-2}
and
\eqref{mult=mult10}
imply
\begin{eqnarray}
	&&\dim
\Gamma(M^{1};
{\mathcal Hom}_{{\mathcal D}_{X^{1}}}(
{\mathfrak M}|_{X^{1}},
\Gamma_{M^{1}_{\alpha}}
(
{\mathcal B}_{M^{1}}
)
))
\label{dim<dim+dimGamma3} \\
&\leq &
\dim\Gamma(M^{1};
{\mathcal Hom}_{{\mathcal D}_{X^{1}}}(
{\mathfrak M}|_{X^{1}},
\Gamma_{M_{\alpha}^{2}}
(
{\mathcal B}_{M^{1}}
)
))
+
\operatorname{mult}_{T_{X_{\alpha}}^{*}X}^{{\mathcal D}_{X}}({\mathfrak M}).\nonumber
\end{eqnarray}
Thus, 
\eqref{dim0mult2},
\eqref{Gamma=Gamma},
and
\eqref{dim<dim+dimGamma3}
imply
\begin{eqnarray*}
	&&\dim
\Gamma(M;
{\mathcal Hom}_{{\mathcal D}_{X}}(
{\mathfrak M},
\Gamma_{M_{\alpha}}({\mathcal B}_{M})
))\\
\leq  &&
2\operatorname{mult}_{T_{X_{\alpha}}^{*}X}^{{\mathcal D}_{X}}({\mathfrak M})
+
\dim\Gamma(M^{1};
{\mathcal Hom}_{{\mathcal D}_{X^{1}}}(
{\mathfrak M}|_{X^{1}},
\Gamma_{M_{\alpha}^{2}}
(
{\mathcal B}_{M^{1}}
)
)).
\end{eqnarray*}
Repeating the same argument,
we have the corollary.
\end{proof}

Considering the filtration by support
(cf.\:\cite[(6.10)]{KS}),
we get the desired result.

\begin{Proposition}
\label{dim-rela-U}
Let $M$ be a real analytic manifold,
$X$ a complexification of $M$,
and
$X=\bigsqcup_{\alpha\in A}X_{\alpha}$
a regular stratification of $X$
with respect to 
a holonomic ${\mathcal D}_{X}$-module ${\mathfrak M}$ such that each stratum $X_{\alpha}$ is connected.
Let $U$ be a relatively compact semianalytic open subset of $M$
and $J_{\alpha}$ the integer given in Lemma \ref{semiMalpha} for any $\alpha \in A$.
Then,
we have
\begin{eqnarray*}
		\dim 
	\Gamma(U;
	{\mathcal Hom}_{{\mathcal D}_{X}}
(
{\mathfrak M},
{\mathcal B}_{M}
))
\leq 
\sum_{\alpha\in A}
J_{\alpha}\cdot 
\operatorname{mult}_{T_{X_{\alpha}}^{*}X}^{{\mathcal D}_{X}}({\mathfrak M}).
\end{eqnarray*}
\end{Proposition}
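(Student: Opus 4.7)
The plan is to set up a filtration by support on $M$ that decomposes ${\mathcal B}_M$ according to the given stratification, and then to apply Corollary \ref{Cor-dim-Gamma} one stratum at a time after restricting the ambient complex manifold to the open complement of the strata already processed. Since $U$ is relatively compact and a Whitney regular stratification is locally finite, only finitely many strata meet the closure of $U$ in $X$; strata not meeting $\overline U$ contribute nothing to either side of the asserted inequality, so I may assume $A = \{\alpha_1,\ldots,\alpha_n\}$ is finite.

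The first step is to fix a linear extension of the partial order on $A$ defined by $\alpha \preceq \beta \iff X_\alpha \subseteq \overline{X_\beta}$, so that each $Z_k := \bigsqcup_{i \leq k} X_{\alpha_i}$ is closed in $X$ (using that the closure of a stratum in a Whitney stratification is a union of strata of strictly lower dimension). I then set $W_k := X \setminus Z_k$, $M_{Z_k} := M \cap Z_k$, and $M_{W_k} := M \cap W_k$. This produces a filtration $\emptyset = M_{Z_0} \subset \cdots \subset M_{Z_n} = M$ of $M$ by closed subsets with successive differences $M_{\alpha_k} = M \cap X_{\alpha_k}$, and moreover $X_{\alpha_k}$ is closed and connected in $W_{k-1}$.

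For each $k$, Lemma \ref{ZZZ} yields an exact sequence
\[
0 \to \Gamma_{M_{Z_{k-1}}}({\mathcal B}_M) \to \Gamma_{M_{Z_k}}({\mathcal B}_M) \to \Gamma_{M_{\alpha_k}}({\mathcal B}_M).
\]
Applying the left exact functor $\Gamma(U;{\mathcal Hom}_{{\mathcal D}_X}({\mathfrak M}, \cdot))$ and rewriting the rightmost term via the same chain of isomorphisms as in \eqref{Gamma=Gamma} (Lemma \ref{KK'} together with ${\mathcal B}_M|_{M_{W_{k-1}}} = {\mathcal B}_{M_{W_{k-1}}}$) reduces the contribution of the third term to
\[
\dim \Gamma(U \cap M_{W_{k-1}};{\mathcal Hom}_{{\mathcal D}_{W_{k-1}}}({\mathfrak M}|_{W_{k-1}}, \Gamma_{M_{\alpha_k}}({\mathcal B}_{M_{W_{k-1}}}))).
\]
Since $X_{\alpha_k}$ is a closed connected stratum of the regular stratification $W_{k-1} = \bigsqcup_{i \geq k} X_{\alpha_i}$ with respect to ${\mathfrak M}|_{W_{k-1}}$, Corollary \ref{Cor-dim-Gamma} bounds this quantity by $J_{\alpha_k} \cdot \operatorname{mult}_{T_{X_{\alpha_k}}^{*}X}^{{\mathcal D}_X}({\mathfrak M})$, where the multiplicity is unchanged upon restriction to the open subset $W_{k-1}$ as in \eqref{mult=mult10}. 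Induction on $k$, together with ${\mathcal B}_M = \Gamma_{M_{Z_n}}({\mathcal B}_M)$, then yields the claimed estimate after summing over $\alpha \in A$.

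The main obstacle is handling strata $X_\alpha$ that are not closed in $X$; the filtration-by-support technique resolves this by passing, at each step, to the open complexification $W_{k-1}$ on which the next stratum becomes closed, so that Corollary \ref{Cor-dim-Gamma} applies. The remaining bookkeeping---checking that $U \cap M_{W_{k-1}}$ is still relatively compact and semianalytic so the constants $J_{\alpha_k}$ of Lemma \ref{semiMalpha} are valid, and that the isomorphisms used in \eqref{Gamma=Gamma} transfer verbatim to the restricted setting---is formal and essentially parallels the argument already used to prove Corollary \ref{Cor-dim-Gamma}.
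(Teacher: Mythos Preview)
Your proposal is correct and follows essentially the same filtration-by-support strategy as the paper. The only cosmetic difference is the direction of the induction: the paper starts from ${\mathcal B}_M=\Gamma_M({\mathcal B}_M)$ and peels off a closed stratum $X_{\alpha_0}$ first (then passes to $X\setminus X_{\alpha_0}$ and repeats), whereas you build up the chain $\Gamma_{M_{Z_0}}\subset\Gamma_{M_{Z_1}}\subset\cdots\subset\Gamma_{M_{Z_n}}={\mathcal B}_M$ from below; the resulting inductive inequalities and the invocation of Corollary~\ref{Cor-dim-Gamma} on each open piece $W_{k-1}$ are the same.
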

\begin{proof}
Take a relatively compact semianalytic open subset $U_{\mathbb C}$ of $X$ satisfying $U_{\mathbb C}\cap M=U$.
Because we only consider the space of sections over $U$,
	we may assume that $X=U_{\mathbb C}$
	and $M=U$.

Because 
$X=\bigsqcup_{\alpha\in A}X_{\alpha}$
is a regular stratification,
there exists
$\alpha_{0}\in A$
such that
$X_{\alpha_{0}}$ is closed in $X$.
Applying Corollary \ref{Cor-dim-Gamma},
we have
\begin{eqnarray}
	\dim 
	\Gamma(U;
	{\mathcal Hom}_{{\mathcal D}_{X}}
(
{\mathfrak M},
\Gamma_{M_{\alpha_{0}}}(
{\mathcal B}_{M}
)))
\leq 
J_{\alpha_{0}}\cdot 
\operatorname{mult}_{T_{X_{\alpha_{0}}}^{*}X}^{{\mathcal D}_{X}}({\mathfrak M}).
\label{dimJa}
\end{eqnarray}
Because $M_{\alpha_{0}}$ is closed in $M$
and $\Gamma_{M}({\mathcal B}_{M})={\mathcal B}_{M}$,
we have an exact sequence
$$
0\to
\Gamma_{M_{\alpha_{0}}}({\mathcal B}_{M})
\to
{\mathcal B}_{M}
\to
\Gamma_{M\backslash M_{\alpha_{0}}}({\mathcal B}_{M})
$$
by Lemma \ref{ZZZ}.
Applying the left exact functor
$\Gamma(U;
{\mathcal Hom}_{{\mathcal D}_{X}}(
{\mathfrak M},
\cdot))
$, we have
\begin{alignat*}{1}
0
&\to
\Gamma(U;
{\mathcal Hom}_{{\mathcal D}_{X}}(
{\mathfrak M},
\Gamma_{M_{\alpha_{0}}}
(
{\mathcal B}_{M}
)
)
)
\\
&\to
\Gamma(U;
{\mathcal Hom}_{{\mathcal D}_{X}}(
{\mathfrak M},
{\mathcal B}_{M}
))
\to
\Gamma(U;
{\mathcal Hom}_{{\mathcal D}_{X}}(
{\mathfrak M},
\Gamma_{M\backslash M_{\alpha_{0}}}
(
{\mathcal B}_{M}
)
)),
\end{alignat*}
which is exact.
Therefore,
we have
\begin{eqnarray*}
&&\dim
\Gamma(U;
{\mathcal Hom}_{{\mathcal D}_{X}}(
{\mathfrak M},
{\mathcal B}_{M}
))\\
\leq &&
J_{\alpha_{0}}\cdot 
\operatorname{mult}_{T_{X_{\alpha_{0}}}^{*}X}^{{\mathcal D}_{X}}({\mathfrak M})
+
\dim\Gamma(U;
{\mathcal Hom}_{{\mathcal D}_{X}}(
{\mathfrak M},
\Gamma_{M\backslash M_{\alpha_{0}}}
(
{\mathcal B}_{M}
)
))
\end{eqnarray*}
by \eqref{dimJa}.
In the same way,
we have
\begin{eqnarray*}
&&
\dim\Gamma(U;
{\mathcal Hom}_{{\mathcal D}_{X}}(
{\mathfrak M},
\Gamma_{M\backslash M_{\alpha_{0}}}
(
{\mathcal B}_{M}
)
))
\\
\leq &&
J_{\alpha_{1}}\cdot 
\operatorname{mult}_{T_{X_{\alpha_{1}}}^{*}X}^{{\mathcal D}_{X}}({\mathfrak M})
+
\dim \Gamma(U;
{\mathcal Hom}_{{\mathcal D}_{X}}(
{\mathfrak M},
\Gamma_{M\backslash M_{\alpha_{0}}\cup M_{\alpha_{1}}}
(
{\mathcal B}_{M}
)
))
\end{eqnarray*}
for $\alpha_{1}\in A$
such that $X_{\alpha_{1}}$ is closed in $X\backslash X_{\alpha_{0}}$.
Repeating
this argument,
we have
the proposition.
\end{proof}

We note that $J_{\alpha}$ only depends on 
$U$ and
the stratification $X=\bigsqcup_{\alpha\in A} X_{\alpha}$.

\section{Upper bound of the dimensions of the spaces of group invariant hyperfunctions}
\label{proofSolfin}
In this section,
we give
an
upper bound of the dimensions of the spaces of group invariant hyperfunctions.
We want to use Proposition \ref{dim-rela-U}
for the proof of Theorem \ref{Solfin-analytic}.
Therefore,
we should show that
there exists a holonomic ${\mathcal D}_{X}$-module 
${\mathfrak M}_{\tau}$
and a regular stratification $X=\bigsqcup_{\alpha\in A}X_{\alpha}$
with respect to 
${\mathfrak M}_{\tau}$ such that
\begin{enumerate}[(1)]
    \item $
(\Gamma(U;{\mathcal B}_{M})\otimes \tau)^{{\mathfrak h}_{\mathbb C}}
\simeq
\Gamma(U;
{\mathcal Hom}_{{\mathcal D}_{X}}
({\mathfrak M}_{\tau},
{\mathcal B}_{M})
)
$ for any open subset $U\subset X$,
	\item the stratification $X=\bigsqcup_{\alpha\in A}X_{\alpha}$ does not depend on $\tau$,
	\item each stratum $X_{\alpha}$ is connected.
\end{enumerate}

First,
we construct a holonomic ${\mathcal D}_{X}$-module
${\mathfrak M}_{\tau}$
satisfying these conditions in Lemma \ref{constructM}.
\begin{Lemma}
\label{constructM}
	In the setting of 
	Theorem \ref{Solfin-analytic},
	there exists a holonomic ${\mathcal D}_{X}$-module
	${\mathfrak M}_{\tau}$
	satisfying the conditions (1), (2) and (3) above.
\end{Lemma}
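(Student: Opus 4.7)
The plan is to realize the space of $\mathfrak{h}_{\mathbb{C}}$-invariants as the $\mathcal{B}_M$-solution sheaf of a single $\tau$-twisted $\mathcal{D}_X$-module, and then to read off properties (1)--(3) from a $\tau$-independent geometric picture determined entirely by the $H_{\mathbb{C}}$-action.

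First I would construct $\mathfrak{M}_\tau$ by a Zuckerman-type tensor construction. For $\xi \in \mathfrak{h}_{\mathbb{C}}$, let $\xi_X \in \Gamma(X,\mathcal{T}_X)$ denote the holomorphic vector field on $X$ induced by the infinitesimal $H_{\mathbb{C}}$-action, and regard $\xi \mapsto \xi_X$ as a Lie algebra map $\mathfrak{h}_{\mathbb{C}} \to \Gamma(X,\mathcal{D}_X)$. Viewing $\mathcal{D}_X$ as a right $U(\mathfrak{h}_{\mathbb{C}})$-module via $P\cdot \xi := P\xi_X$, I set
$$\mathfrak{M}_\tau := \mathcal{D}_X \otimes_{U(\mathfrak{h}_{\mathbb{C}})} \tau^*,$$
which is the quotient of the free $\mathcal{D}_X$-module $\mathcal{D}_X \otimes_{\mathbb{C}} \tau^*$ by the left $\mathcal{D}_X$-submodule generated by $\{\xi_X \otimes v - 1 \otimes \xi v : \xi \in \mathfrak{h}_{\mathbb{C}},\, v \in \tau^*\}$, where $\tau^*$ is the contragredient $\mathfrak{h}_{\mathbb{C}}$-module. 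Property (1) then follows from tensor-hom adjunction: on every open $U \subset X$ one has $\Gamma(U;\mathcal{Hom}_{\mathcal{D}_X}(\mathfrak{M}_\tau,\mathcal{B}_M)) \simeq \mathrm{Hom}_{U(\mathfrak{h}_{\mathbb{C}})}(\tau^*,\Gamma(U;\mathcal{B}_M))$, and since $\dim\tau < \infty$, the identification $\mathrm{Hom}_{\mathbb{C}}(\tau^*,\Gamma(U;\mathcal{B}_M)) \simeq \Gamma(U;\mathcal{B}_M)\otimes\tau$ carries $\mathfrak{h}_{\mathbb{C}}$-equivariant maps to $\mathfrak{h}_{\mathbb{C}}$-invariants; the $\mathfrak{h}_{\mathbb{C}}$-action on $\mathcal{B}_M$ is the one through the vector fields $\xi_X$.

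For properties (2) and (3) I would equip $\mathfrak{M}_\tau$ with the good filtration inherited from the order filtration on $\mathcal{D}_X$, placing $\tau^*$ in degree zero. The principal symbols of the defining relations are $\sigma_1(\xi_X) \otimes v$ and depend on the $H_{\mathbb{C}}$-action but not on $\tau$. This yields
$$\mathrm{Ch}(\mathfrak{M}_\tau) \subset \mu^{-1}(0) = \bigcup_{\mathcal{O}} \overline{T^*_{\mathcal{O}}X},$$
where $\mu \colon T^*X \to \mathfrak{h}_{\mathbb{C}}^*$ is the moment map of the canonical lift of the $H_{\mathbb{C}}$-action to $T^*X$, and $\mathcal{O}$ ranges over the orbits of the identity component $H_{\mathbb{C}}^\circ$. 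The hypotheses $\#(H_{\mathbb{C}}\backslash X) < \infty$ and $\#(H_{\mathbb{C}}/H_{\mathbb{C}}^\circ) < \infty$ guarantee that there are finitely many such $\mathcal{O}$, so the right-hand side is Lagrangian and $\mathfrak{M}_\tau$ is holonomic. Crucially, this characteristic-variety bound is independent of $\tau$.

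Finally, I would fix once and for all a Whitney stratification $X = \bigsqcup_{\alpha\in A} X_\alpha$ into connected strata refining the finite $H_{\mathbb{C}}^\circ$-orbit decomposition; such a stratification exists because the orbits are locally closed analytic submanifolds, and one may further split each stratum into its connected components to enforce condition (3). Relative to this single stratification, $\mathrm{Ch}(\mathfrak{M}_\tau) \subset \bigsqcup_\alpha T^*_{X_\alpha}X$ for every $\tau$, so the stratification is regular for all $\mathfrak{M}_\tau$ simultaneously, giving (2) and (3). The hard part, and the step that will require the most care, is the moment-map identification of the characteristic variety: verifying that under the chosen good filtration the symbols $\sigma_1(\xi_X)$ genuinely generate the annihilator of the associated graded module, so that $\mathrm{Ch}(\mathfrak{M}_\tau)$ lies in $\mu^{-1}(0)$. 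Once this step is secured, both holonomicity and the $\tau$-independence of a common regular Whitney stratification fall out formally.
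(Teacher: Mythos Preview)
Your proposal is correct and follows essentially the same route as the paper: the module $\mathfrak{M}_\tau = \mathcal{D}_X \otimes_{U(\mathfrak{h}_{\mathbb{C}})} \tau^\vee$ is exactly what the paper constructs, and both you and the paper verify (1) by tensor--hom adjunction and (2)--(3) by showing $\mathrm{Ch}(\mathfrak{M}_\tau)$ lies in the union of conormals to the finitely many $H_{\mathbb{C}}$-orbits (the paper simply cites Kashiwara for this step while you spell out the moment-map picture).

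One small overstatement to clean up: in your final paragraph you worry about whether the symbols $\sigma_1(\xi_X)$ \emph{generate} the annihilator of $\mathrm{gr}\,\mathfrak{M}_\tau$. You do not need this. It suffices that each $\sigma_1(\xi_X)\otimes v$ lies in the annihilator, which is immediate from the defining relations and the good filtration; this alone gives $\mathrm{Ch}(\mathfrak{M}_\tau)\subset\{\sigma_1(\xi_X)=0:\xi\in\mathfrak{h}_{\mathbb{C}}\}=\mu^{-1}(0)$. So the step you flag as ``the hard part'' is in fact automatic.
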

\begin{proof}
	Let 
$U({\mathfrak h})$ be the universal enveloping algebra of ${\mathfrak h}_{\mathbb C}$.
The action of $H_{\mathbb C}$ on $X$
induces a Lie algebra homomorphism
$a\colon U({\mathfrak h})\to{\mathcal D}_{X}$.
By this homomorphism,
we regard
${\mathcal D}_{X}$ as a right $U({\mathfrak h})$-module.
We define a coherent
${\mathcal D}_{X}$-module ${\mathfrak M}_{\tau}$
by
(cf. Beilinson-Bernstein localization \cite{BB})
\begin{align}
{\mathfrak M}_{\tau}
:=
{\mathcal D}_{X}\otimes_{U({\mathfrak h})} \tau^{\vee}.
\label{DefM}
\end{align}
In other words,
\begin{align}
{\mathfrak M}_{\tau}
=
{\mathcal D}_{X}\otimes_{{\mathbb C}_{X}} \tau^{\vee}/
I_{\tau^{\vee}},
\end{align}
where
$I_{\tau^{\vee}}$ is a
${\mathcal D}_{X}$-submodule of
${\mathcal D}_{X}\otimes\tau^{\vee}$
defined by
\begin{eqnarray}
\label{Itau}
I_{\tau^{\vee}}
:=
\sum_{
H\in{\mathfrak h}_{\mathbb C}, v\in \tau^{\vee}}
{\mathcal D}_{X}
\cdot
(a(H)\otimes v-1\otimes \tau^{\vee}(H)v
).
\end{eqnarray}
Let 
$X=\bigsqcup_{\alpha\in A}X_{\alpha}$
be the $H_{\mathbb C}$-orbit decomposition of $X$
and
$X_{\alpha}=\bigsqcup_{k\in K_{\alpha}}X_{\alpha,k}$
the connected component decomposition of $X_{\alpha}$.
Then,
the finiteness of the number of connected components of $H_{\mathbb C}$ and $\#(H_{\mathbb C}\backslash X)<\infty$ imply that
${\mathfrak M}_{\tau}$ is a holonomic ${\mathcal D}_{X}$-module
and
$X=\bigsqcup_{\alpha\in A,k\in K_{\alpha}}X_{\alpha,k}$
is a regular stratification of $X$
with respect to
${\mathfrak M}_{\tau}$
(see \cite[Thm.\:5.1.12]{K})
such that each $X_{\alpha,k}$ is connected.
Thus
${\mathfrak M}_{\tau}$
	satisfies the conditions (2) and (3).
By
an isomorphism
$
{\mathcal B}_{M}\simeq
{\mathcal Hom}_{{\mathcal D}_{X}}
({\mathcal D}_{X},{\mathcal B}_{M})
$
and
the tensor-hom adjunction,
we have
\begin{align}
(\Gamma(M;{\mathcal B}_{M})\otimes\tau)^{\mathfrak h}
&\simeq
(\Gamma(M;{\mathcal Hom}_{{\mathcal D}_{X}}
({\mathcal D}_{X},{\mathcal B}_{M}))\otimes\tau)^{\mathfrak h}
\nonumber\\
&\simeq
(
{\rm Hom}_{{\mathcal D}_{X}}
({\mathcal D}_{X},{\mathcal B}_{M})\otimes\tau)^{\mathfrak h}
\nonumber\\
&\simeq
{\rm Hom}_{U({\mathfrak h})}(
\tau^{\vee},
{\rm Hom}_{{\mathcal D}_{X}}
({\mathcal D}_{X},{\mathcal B}_{M})
)\nonumber\\
&\simeq
{\rm Hom}_{{\mathcal D}_{X}}
({\mathcal D}_{X}\otimes_{U({\mathfrak h})}\tau^{\vee},
{\mathcal B}_{M})\nonumber\\
&\simeq
\Gamma(M;
{\mathcal Hom}_{{\mathcal D}_{X}}
({\mathfrak M}_{\tau},
{\mathcal B}_{M})
).
\nonumber
\end{align}
The similar argument shows that
${\mathfrak M}_{\tau}$ satisfies the condition (1).
\end{proof}
Reindexing,
we assume that $X=\bigsqcup_{\alpha \in A} X_{\alpha}$ is a regular stratification of $X$ with respect to ${\mathfrak M}_{\tau}$ such that each $X_{\alpha}$ is connected.
Then,
Proposition \ref{dim-rela-U} implies
\begin{eqnarray}
\dim
(\Gamma(U;{\mathcal B}_{M})\otimes \tau)^{{\mathfrak h}_{\mathbb C}}
\leq
\sum_{\alpha\in A}
J_{\alpha}
\cdot
{\rm mult}_{T^{*}_{X_{\alpha}}X}^{{\mathcal D}_{X}}
(
{\mathfrak M}_{\tau}
)
\label{<anal}
\end{eqnarray}
by Lemma \ref{constructM}.
Note that $J_{\alpha}$ dose not depend on $\tau$.
Thus,
we want to show that 
${\rm mult}_{T^{*}_{X_{\alpha}}X}^{{\mathcal D}_{X}}
(
{\mathfrak M}_{\tau}
)$ is uniformly bounded 
for the proof of Theorem \ref{Solfin-analytic}.

\begin{Lemma}
\label{multVtC}
Let
${\mathfrak M}_{\tau}$
be a holonomic ${\mathcal D}_{X}$-module defined in \eqref{DefM}.
Then,
for any 
$\alpha\in A$,
there exists 
$C_{\alpha}>0$,
which is independent of $\tau$,
satisfying
\begin{align*}
{\rm mult}_{T^{*}_{X_{\alpha}}X}^{{\mathcal D}_{X}}(
{\mathfrak M}_{\tau}
)
\leq
\dim\tau\cdot
C_{\alpha}.
\end{align*}
\end{Lemma}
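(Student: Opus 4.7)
The plan is to put a good filtration on ${\mathfrak M}_\tau$ coming from the order filtration on ${\mathcal D}_X$, then transport the multiplicity computation to a commutative-algebra statement on $T^*X$ that, apart from an overall factor of $\dim\tau$, is independent of $\tau$.

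First, I would equip ${\mathfrak M}_\tau={\mathcal D}_X\otimes_{U({\mathfrak h})}\tau^\vee$ with the good filtration
\[
{\mathcal F}_j({\mathfrak M}_\tau):=\operatorname{image}\bigl({\mathcal F}_j({\mathcal D}_X)\otimes_{\mathbb C}\tau^\vee\longrightarrow{\mathfrak M}_\tau\bigr).
\]
Each generator $a(H)\otimes v-1\otimes\tau^\vee(H)v$ of the ideal $I_{\tau^\vee}$ in \eqref{Itau} lies in ${\mathcal F}_1({\mathcal D}_X)\otimes\tau^\vee$ with principal symbol $\sigma_1(a(H))\otimes v$. Consequently $gr_{\mathcal F}({\mathfrak M}_\tau)$ is a quotient of $(gr_{\mathcal F}({\mathcal D}_X)/J_0)\otimes_{\mathbb C}\tau^\vee$, where $J_0\subset gr_{\mathcal F}({\mathcal D}_X)$ is the (fixed, $\tau$-independent) ideal generated by $\{\sigma_1(a(H)):H\in{\mathfrak h}_{\mathbb C}\}$.

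Next, applying the right-exact functor $F$ of \eqref{Ffunctor} yields a surjection of coherent ${\mathcal O}_{T^*X}$-modules
\[
({\mathcal O}_{T^*X}/\tilde J)\otimes_{\mathbb C}\tau^\vee\twoheadrightarrow F(gr_{\mathcal F}({\mathfrak M}_\tau)),
\]
where $\tilde J\subset{\mathcal O}_{T^*X}$ is the coherent ideal generated by the principal symbols $\sigma_1(a(H))$, viewed as linear functions along the fibres of $\pi\colon T^*X\to X$. Because the vector fields $\{a(H):H\in{\mathfrak h}_{\mathbb C}\}$ span the tangent space of each $H_{\mathbb C}$-orbit $X_\alpha$ at every one of its points, the zero locus of $\tilde J$ is precisely $\bigcup_\alpha T^*_{X_\alpha}X$. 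By hypothesis this is a finite union of irreducible $d_X$-dimensional analytic subsets, and each $T^*_{X_\alpha}X$ appears as an irreducible component of the support of ${\mathcal O}_{T^*X}/\tilde J$. Hence the multiplicity
\[
C_\alpha:=\operatorname{mult}^{{\mathcal O}_{T^*X}}_{T^*_{X_\alpha}X}({\mathcal O}_{T^*X}/\tilde J)
\]
is a well-defined, finite, $\tau$-independent integer depending only on the orbit $X_\alpha$.

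Finally, the multiplicity along an irreducible component is the length of the localization at the generic point, so tensoring with a $\dim\tau$-dimensional vector space multiplies this length by $\dim\tau$, and passing to a quotient can only decrease it. Combining this with the definition of the ${\mathcal D}_X$-module multiplicity in terms of $F(gr_{\mathcal F}(\cdot))$, I would conclude
\[
\operatorname{mult}^{{\mathcal D}_X}_{T^*_{X_\alpha}X}({\mathfrak M}_\tau)
=\operatorname{mult}^{{\mathcal O}_{T^*X}}_{T^*_{X_\alpha}X}\!\bigl(F(gr_{\mathcal F}({\mathfrak M}_\tau))\bigr)
\leq\dim\tau\cdot C_\alpha.
\]
The main obstacle I anticipate is the geometric step identifying the zero locus of $\tilde J$ with $\bigcup_\alpha T^*_{X_\alpha}X$, and in particular checking that $T^*_{X_\alpha}X$ is a genuine irreducible component (not embedded in a larger one), so that $C_\alpha$ is finite; this relies on the finiteness of $\#(H_{\mathbb C}\backslash X)$ together with the fact that the $a(H)$'s generate the tangent space to each orbit, which is where the assumption on the number of connected components of $H_{\mathbb C}$ enters.
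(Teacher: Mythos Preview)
Your argument is correct and is essentially the same as the paper's: the paper packages your ${\mathcal O}_{T^*X}/\tilde J$ as ${\mathfrak N}$, proves the surjection ${\mathfrak N}\otimes\tau^\vee\twoheadrightarrow F(gr_{\mathcal F}({\mathfrak M}_\tau))$ via the same principal-symbol computation, and concludes by additivity of the ${\mathcal O}_{T^*X}$-multiplicity. The only remark is that the obstacle you anticipate is milder than you suggest: you do not need $T^*_{X_\alpha}X$ to be a genuine irreducible component of the support, only that $\dim\operatorname{supp}({\mathcal O}_{T^*X}/\tilde J)\leq d_X$, which follows immediately from the containment in the finite union $\bigcup_\alpha T^*_{X_\alpha}X$; the multiplicity along any $d_X$-dimensional irreducible subvariety is then automatically finite (possibly zero).
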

For the proof of Lemma \ref{multVtC},
we need some preparation.
The order filtration ${\mathcal F}$ of
${\mathcal D}_{X}$
induces a filtration of a ${\mathcal D}_{X}$-module
${\mathcal D}_{X}\otimes \tau^{\vee}$,
which also induces good filtrations of
${\mathfrak M}_{\tau}$
and
$I_{\tau^{\vee}}$.
We write 
${\mathcal F}$
for
these filtration.
Namely, we put
\begin{eqnarray*}
{\mathcal F}_{j}({\mathcal D}_{X}\otimes \tau^{\vee})
&:=&
{\mathcal F}_{j}({\mathcal D}_{X})\otimes \tau^{\vee},
\\
{\mathcal F}_{j}({\mathfrak M}_{\tau})
&:=&
\left(
{\mathcal F}_{j}({\mathcal D}_{X}\otimes \tau^{\vee})+I_{\tau^{\vee}}
\right)
/
I_{\tau^{\vee}},\\
{\mathcal F}_{j}(I_{\tau^{\vee}})
&:=&
{\mathcal F}_{j}({\mathcal D}_{X}\otimes \tau^{\vee})\cap I_{\tau^{\vee}}.
\end{eqnarray*}
We note that there exists an isomorphism
\begin{eqnarray*}
gr_{\mathcal F}({\mathfrak M}_{\tau})
\simeq
gr_{\mathcal F}({\mathcal D}_{X}\otimes \tau^{\vee})
/
gr_{\mathcal F}(I_{\tau^{\vee}}).
\end{eqnarray*}
\begin{Lemma}
\label{surjection}
There exists an
${\mathcal O}_{T^{*}X}$-module
${\mathfrak N}$,
which is independent of $\tau$,
satisfying the following two conditions:
\begin{enumerate}
\item
$\dim 
{\rm supp}
\left(
{\mathfrak N}
\right)
\leq d_{X}$,

\item
there exists a surjective homomorphism $
F(
gr_{\mathcal F}(
{\mathfrak M}_{\tau}
))
\twoheadleftarrow
{\mathfrak N}
\otimes
\tau^{\vee}
$.
\end{enumerate}
\end{Lemma}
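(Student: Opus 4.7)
The plan is to identify, at the level of principal symbols, a $\tau$-independent ideal inside $gr_{\mathcal F}(I_{\tau^{\vee}})$ and then apply the functor $F$. Concretely, the generators of $I_{\tau^{\vee}}$ in \eqref{Itau} have the form $a(H)\otimes v - 1\otimes \tau^{\vee}(H)v$ where $a(H)$ is a vector field (a first-order operator) and the second term is of order zero. Hence its image in $gr_{\mathcal F}({\mathcal D}_X \otimes \tau^{\vee}) = gr_{\mathcal F}({\mathcal D}_X)\otimes \tau^{\vee}$ is simply $\sigma(a(H))\otimes v$, the principal symbol of the infinitesimal vector field tensored with $v$. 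Thus if we let $J \subset gr_{\mathcal F}({\mathcal D}_X)$ denote the ideal generated by $\{\sigma(a(H)) : H \in {\mathfrak h}_{\mathbb C}\}$, then $J\otimes \tau^{\vee} \subset gr_{\mathcal F}(I_{\tau^{\vee}})$, and this inclusion is the key observation since $J$ is independent of $\tau$.

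Next I would define ${\mathfrak N} := F(gr_{\mathcal F}({\mathcal D}_X)/J)$, which by construction is the quotient ${\mathcal O}_{T^{*}X}/{\mathcal I}$ where ${\mathcal I}$ is the ideal sheaf of ${\mathcal O}_{T^{*}X}$ generated by the symbols $\sigma(a(H))$. Applying the right exact functor $F$ to the composition of surjections
\begin{equation*}
gr_{\mathcal F}({\mathcal D}_X)/J \otimes \tau^{\vee} \twoheadrightarrow gr_{\mathcal F}({\mathcal D}_X \otimes \tau^{\vee})/gr_{\mathcal F}(I_{\tau^{\vee}}) \simeq gr_{\mathcal F}({\mathfrak M}_{\tau}),
\end{equation*}
and using that $F$ commutes with tensoring by the finite-dimensional vector space $\tau^{\vee}$, produces the desired surjection ${\mathfrak N}\otimes \tau^{\vee} \twoheadrightarrow F(gr_{\mathcal F}({\mathfrak M}_{\tau}))$. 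This handles condition (2).

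For condition (1), the support of ${\mathfrak N}$ is the closed analytic subset of $T^{*}X$ cut out by ${\mathcal I}$, and at a point $(x,\xi)\in T^{*}X$ the vanishing of all $\sigma(a(H))(x,\xi) = \langle \xi, (a(H))_x\rangle$ is exactly the condition that $\xi$ annihilate the tangent space to the $H_{\mathbb C}$-orbit through $x$. Therefore $\operatorname{supp}({\mathfrak N}) = \bigcup_{\alpha \in A} T^{*}_{X_{\alpha}}X$, a finite union by the hypothesis $\#(H_{\mathbb C}\backslash X) < \infty$; since each conormal bundle to a complex submanifold of $X$ has complex dimension $d_X$, the union has dimension $\leq d_X$.

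The proof is mostly formal manipulation once the principal-symbol observation is made; the only point requiring care is verifying the factorization of the surjection through ${\mathfrak N}\otimes\tau^{\vee}$, which amounts to checking compatibility between the filtration ${\mathcal F}$ on ${\mathfrak M}_{\tau}$ induced by \eqref{DefM} and the submodule structure, together with right exactness of $F$ applied to a short exact sequence of $gr_{\mathcal F}({\mathcal D}_X)$-modules. I do not anticipate any serious obstacle: the identification $gr_{\mathcal F}(I_{\tau^{\vee}}) \supset J\otimes\tau^{\vee}$ is immediate from the order of the generators, and the geometric description of $\operatorname{supp}({\mathfrak N})$ as a union of conormal bundles to the finitely many $H_{\mathbb C}$-orbits gives the dimension bound directly.
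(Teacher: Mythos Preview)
Your proposal is correct and follows essentially the same approach as the paper: the paper also observes that the principal symbol $\sigma_1(a(H)\otimes v - 1\otimes\tau^{\vee}(H)v)=\sigma_1(a(H))\otimes v$, deduces the inclusion $gr_{\mathcal F}({\mathcal D}_X)\cdot(\sigma_1(a({\mathfrak h}_{\mathbb C}))\otimes\tau^{\vee})\subset gr_{\mathcal F}(I_{\tau^{\vee}})$, defines ${\mathfrak N}:={\mathcal O}_{T^{*}X}/{\mathcal O}_{T^{*}X}\cdot\sigma_1(a({\mathfrak h}))$, and obtains the surjection and the dimension bound on $\operatorname{supp}({\mathfrak N})$ via the conormal bundles of the finitely many $H_{\mathbb C}$-orbits exactly as you describe. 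The only cosmetic difference is that the paper states $\operatorname{supp}({\mathfrak N})\subset\bigcup_{\alpha}T^{*}_{X_{\alpha}}X$ rather than equality, which suffices for the bound.
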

Postponing the proof of this lemma,
we prove Lemma 
\ref{multVtC}.
\begin{proof}[Proof of Lemma \ref{multVtC}]
By the definition of ${\rm mult}_{T^{*}_{X_{\alpha}}X}^{{\mathcal D}_{X}}(\cdot)$,
we have
\begin{align*}
{\rm mult}_{T^{*}_{X_{\alpha}}X}^{{\mathcal D}_{X}}({\mathfrak M}_{\tau})
=
{\rm mult}_{T^{*}_{X_{\alpha}}X}^{{\mathcal O}_{T^{*}X}}
(
F(
{\mathfrak M}_{\tau}
)
).
\end{align*}
Note that
${\rm mult}_{T^{*}_{X_{\alpha}}X}^{{\mathcal O}_{T^{*}X}}(\cdot)$
is additive
on short exact sequences of 
${\mathcal O}_{T^{*}X}$-modules with at most $d_{X}$-dimensional supports
\cite[Sect.\:1.5 in Appx.\:V]{Bjork}
and that
$
{\mathfrak N}
\otimes
\tau^{\vee}
\simeq
{\mathfrak N}^{\dim\tau}
$
as an ${\mathcal O}_{T^{*}X}$-module.
Then
Lemma \ref{surjection}
implies
\begin{align*}
{\rm mult}_{T^{*}_{X_{\alpha}}X}^{{\mathcal D}_{X}}({\mathfrak M}_{\tau})
\leq
{\rm mult}_{T^{*}_{X_{\alpha}}X}^{{\mathcal O}_{T^{*}X}}
\left(
{\mathfrak N}
\otimes
\tau^{\vee}
\right)
=\dim\tau\cdot
{\rm mult}_{T^{*}_{X_{\alpha}}X}^{{\mathcal O}_{T^{*}X}}
\left(
{\mathfrak N}
\right).
\end{align*}
This completes the proof.
\end{proof}
We shall prove Lemma \ref{surjection} from now on.
For any $j\in{\mathbb Z}_{\geq 0}$,
we write
$\sigma_{j}\colon {\mathcal F}_{j}({\mathcal D}_{X})
\to gr_{\mathcal F}({\mathcal D}_{X})$
for the composition of
the natural surjection
${\mathcal F}_{j}({\mathcal D}_{X})\to
{\mathcal F}_{j}({\mathcal D}_{X})/{\mathcal F}_{j-1}({\mathcal D}_{X})$
and
the injection
$
{\mathcal F}_{j}({\mathcal D}_{X})/{\mathcal F}_{j-1}({\mathcal D}_{X})
\hookrightarrow 
gr_{\mathcal F}({\mathcal D}_{X})$.
We use the same symbol
$\sigma_{j}$
for the homomorphism
$
{\mathcal F}_{j}({\mathcal D}_{X}\otimes \tau^{\vee})\to
gr_{\mathcal F}({\mathcal D}_{X}\otimes \tau^{\vee})$.
\begin{Lemma}
\label{sigma}
For any $H\in{\mathfrak h}_{\mathbb C}$
and
any $v\in \tau^{\vee}$,
we have
\begin{align*}
\sigma_{1}
(
a(H)\otimes v-1\otimes \tau^{\vee}(H)v
)
&=\sigma_{1}(a(H))\otimes v.
\end{align*}
\end{Lemma}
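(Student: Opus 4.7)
The statement is essentially a bookkeeping computation with the order filtration, and no real obstacle is expected. The plan is simply to observe the filtration degree of each summand and use linearity of $\sigma_{1}$.

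First, I would note that since the $H_{\mathbb C}$-action on $X$ induces $a\colon U({\mathfrak h}) \to {\mathcal D}_{X}$ by sending a Lie algebra element to the corresponding holomorphic vector field, we have $a(H) \in {\mathcal F}_{1}({\mathcal D}_{X})$. On the other hand, $1 \in {\mathcal F}_{0}({\mathcal D}_{X}) \subset {\mathcal F}_{1}({\mathcal D}_{X})$, so $1 \otimes \tau^{\vee}(H)v$ lies in ${\mathcal F}_{0}({\mathcal D}_{X} \otimes \tau^{\vee}) = {\mathcal F}_{0}({\mathcal D}_{X}) \otimes \tau^{\vee}$. Consequently, the whole expression $a(H) \otimes v - 1 \otimes \tau^{\vee}(H)v$ does belong to ${\mathcal F}_{1}({\mathcal D}_{X} \otimes \tau^{\vee})$, so applying $\sigma_{1}$ to it makes sense.

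Next, recall that $\sigma_{1}$ is defined as the composition of the natural surjection ${\mathcal F}_{1}(\cdot) \twoheadrightarrow {\mathcal F}_{1}(\cdot)/{\mathcal F}_{0}(\cdot)$ with the inclusion into $gr_{{\mathcal F}}(\cdot)$. In particular, $\sigma_{1}$ vanishes on elements lying in ${\mathcal F}_{0}$. Hence $\sigma_{1}(1 \otimes \tau^{\vee}(H)v) = 0$, and by linearity of $\sigma_{1}$ we obtain
\[
\sigma_{1}\bigl(a(H) \otimes v - 1 \otimes \tau^{\vee}(H)v\bigr) = \sigma_{1}(a(H) \otimes v).
\]

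Finally, since the filtration on ${\mathcal D}_{X} \otimes \tau^{\vee}$ is defined by ${\mathcal F}_{j}({\mathcal D}_{X} \otimes \tau^{\vee}) = {\mathcal F}_{j}({\mathcal D}_{X}) \otimes \tau^{\vee}$, the associated graded is $gr_{{\mathcal F}}({\mathcal D}_{X}) \otimes \tau^{\vee}$, and under this identification $\sigma_{1}(D \otimes v) = \sigma_{1}(D) \otimes v$ for any $D \in {\mathcal F}_{1}({\mathcal D}_{X})$ and $v \in \tau^{\vee}$. Applying this with $D = a(H)$ concludes the proof. The only thing to verify carefully is that $a(H)$ is really of order at most one, which is immediate from the construction of $a$ via the differential of the group action.
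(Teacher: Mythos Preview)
Your proof is correct and follows exactly the same reasoning as the paper: the key observation is that $1\otimes\tau^{\vee}(H)v\in{\mathcal F}_{0}({\mathcal D}_{X}\otimes\tau^{\vee})$, so it is killed by $\sigma_{1}$. The paper's proof is simply this one-line remark, while you have spelled out the surrounding details more explicitly.
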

\begin{proof}
This is clear by
$1\otimes\tau^{\vee}(H)v\in {\mathcal F}_{0}({\mathcal D}_{X}\otimes \tau^{\vee})$.
\end{proof}
\begin{proof}[Proof of Lemma \ref{surjection}]
Recall that
$I_{\tau^{\vee}}$
is a 
${\mathcal D}_{X}$-submodule of
${\mathcal D}_{X}\otimes{\mathcal V}_{\tau^{\vee}}$
generated by
$\{a(H)\otimes v-1\otimes \tau^{\vee}(H)v\mid
H\in{\mathfrak h}_{\mathbb C}, v\in \tau^{\vee}\}$.
Then we have
\begin{align*}
gr_{\mathcal F}(I_{\tau^{\vee}})
\supset 
\sum_{H\in{\mathfrak h}_{\mathbb C}, v\in \tau^{\vee}}
gr_{\mathcal F}({\mathcal D}_{X})
\cdot
\sigma_{1}(a(H)\otimes v-1\otimes \tau^{\vee}(H)v)
\end{align*}
(cf. \cite[Chap.\:2]{gairon}).
Thus, Lemma \ref{sigma}
implies
\begin{align*}
gr_{\mathcal F}( I_{\tau^{\vee}})
\supset gr_{\mathcal F}({\mathcal D}_{X})
\cdot
(
\sigma_{1}(a({\mathfrak h}_{\mathbb C}))
\otimes
\tau^{\vee}
 ).
\end{align*}
This inclusion induces the following surjection:
\begin{align}
\frac{
gr_{\mathcal F}({\mathcal D}_{X}\otimes\tau^{\vee})
}{
gr_{\mathcal F}( I_{\tau^{\vee}})
}
&\twoheadleftarrow
\frac{
gr_{\mathcal F}({\mathcal D}_{X}\otimes\tau^{\vee})
}{
gr_{\mathcal F}({\mathcal D}_{X})
(
\sigma_{1}(a({\mathfrak h}_{\mathbb C}))
\otimes
\tau^{\vee}
 )
}
=
\left(
\frac{
gr_{\mathcal F}({\mathcal D}_{X})}
{
gr_{\mathcal F}(
{\mathcal D}_{X}
)
\sigma_{1}(a
({\mathfrak h}_{\mathbb C}))
}
\right)
\otimes
\tau^{\vee}.
\label{surjgr}
\end{align}
Note that the left-hand side is equal to
$gr_{\mathcal F}(
{\mathfrak M}_{\tau}
)$.
We define
an
${\mathcal O}_{T^{*}X}$-module
${\mathfrak N}$
by
${\mathfrak N}
:=
{\mathcal O}_{T^{*}X}/
{\mathcal O}_{T^{*}X}
\cdot
\sigma_{1}(a
({\mathfrak h}))$.
Applying
the right exact functor $F$ (see 
\eqref{Ffunctor})
to
the surjection
\eqref{surjgr},
we obtain the desired surjection
$$
F(
gr_{\mathcal F}(
{\mathfrak M}_{\tau}
))
\twoheadleftarrow
{\mathcal O}_{T^{*}X}
\otimes_{\pi^{-1}(gr({\mathcal D}_{X}))}
\pi^{-1}\left(
\frac{
gr_{\mathcal F}({\mathcal D}_{X})}
{
gr_{\mathcal F}(
{\mathcal D}_{X}
)
\sigma_{1}(a
({\mathfrak h}_{\mathbb C}))
}
\right)
\otimes
\tau^{\vee}
\simeq
{\mathfrak N}
\otimes
\tau^{\vee}.
$$

Next,
we prove
$\dim 
{\rm supp}
\left(
{\mathfrak N}
\right)
\leq d_{X}$.
Let
$X=\bigsqcup_{\alpha\in A}X_{\alpha}$
be the 
$H_{\mathbb C}$-orbit decomposition
of $X$.
Then we have
$\#(A)=\#(H_{\mathbb C}\backslash X)<\infty$
by the assumption.
Because
$
{\rm supp}
\left(
{\mathfrak N}
\right)
$
is contained in
$\bigcup_{\alpha\in A}T^{*}_{X_{\alpha}}X$
(cf.\:\cite[Thm.\:5.1.12]{K})
and
$\dim T^{*}_{X_{\alpha}}X=d_{X}$,
we have
$\dim 
{\rm supp}
\left(
{\mathfrak N}
\right)
\leq d_{X}$.
\end{proof}

\begin{proof}[Proof of Theorem \ref{Solfin-analytic}.]
By Lemma \ref{constructM},
we have \eqref{<anal} .
Note $\#(A)=\#(H_{\mathbb C}\backslash X)<\infty$ by the assumption.
Then, we have $J:=\sup_{\alpha\in A}J_{\alpha}<\infty$
by Lemma \ref{semiMalpha}.
Then, we have
\begin{eqnarray*}
	\dim
(\Gamma(U;{\mathcal B}_{M})\otimes \tau)^{{\mathfrak h}_{\mathbb C}}
&\leq &
J
\sum_{\alpha\in A}
{\rm mult}_{T^{*}_{X_{\alpha}}X}^{{\mathcal D}_{X}}
(
{\mathfrak M}_{\tau}
)\\
&\leq &
J
\sum_{\alpha\in A}
\dim\tau\cdot
C_{\alpha}
\end{eqnarray*}
by Lemma \ref{multVtC}.
Setting $C:=J\cdot \#(A)\cdot \sup_{\alpha\in A}C_{\alpha}$,
we have
Theorem \ref{Solfin-analytic}.
\end{proof}

\section{Proof of Theorem \ref{Qfin}}
\label{proofQfin}
In this section,
we prove Theorem \ref{Qfin}
by using
Theorem \ref{Solfin-analytic}.
For this end,
we quote the characterization of intertwining operators by
invariant 
distributions.
\begin{Fact}[{\cite[Prop.\:3.2]{KS}}]
\label{GPGP}
Let 
$G$
be a real Lie group.
Suppose that
$G'$ and $H$
are
closed subgroups of $G$
and
that
$H'$
is
a closed subgroup of $G'$.
Let $\tau$ and $\tau'$ be finite-dimensional representations of $H$ and $H'$, respectively.

\begin{enumerate}[(1)]
\item
There is a natural injective map:
\begin{eqnarray}
{\rm Hom}_{G'}\left(C^{\infty}(G/H,\tau),C^{\infty}(G'/H',\tau')\right)
\hookrightarrow
\left({\mathcal D}'(G/H,\tau^{\vee}\otimes{\mathbb C}_{2\rho})\otimes \tau'\right)^{H'}.
\label{surj}
\end{eqnarray}
Here
$\tau^{\vee}$
is the contragredient representation of $\tau$,
${\mathbb C}_{2\rho}$
is the one-dimensional representation of $H$ 
defined by
$h\mapsto \left|\: \det ({\rm Ad}(h)\colon {\mathfrak g}/{\mathfrak h}\to{\mathfrak g}/{\mathfrak h})\right|^{-1}$.
\item
If $H$ is cocompact in $G$ (e.g., a parabolic subgroup of $G$ or a uniform lattice), then (\ref{surj}) is a bijection.
\end{enumerate}
\end{Fact}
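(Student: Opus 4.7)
The plan is to use the standard ``evaluation at the base point'' construction that realizes an intertwining operator as a vector-valued invariant distribution. Given $T\in \mathrm{Hom}_{G'}(C^{\infty}(G/H,\tau),C^{\infty}(G'/H',\tau'))$, I would first compose $T$ with evaluation at the identity coset $eH'\in G'/H'$. Since the fiber of the associated homogeneous vector bundle at $eH'$ is canonically identified with $V_{\tau'}$, this yields a continuous linear map $\xi_{T}\colon C^{\infty}(G/H,\tau)\to V_{\tau'}$. The basic duality for homogeneous vector bundles---arising from the $G$-invariant distributional pairing of smooth sections with sections twisted by the modular character---identifies $(C^{\infty}(G/H,\tau))'$ with ${\mathcal D}'(G/H,\tau^{\vee}\otimes{\mathbb C}_{2\rho})$, the $2\rho$-twist coming precisely from the failure of a $G$-invariant density to exist on $G/H$ unimodularly. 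Under this identification $\xi_{T}$ becomes an element of ${\mathcal D}'(G/H,\tau^{\vee}\otimes{\mathbb C}_{2\rho})\otimes V_{\tau'}$.

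To establish $H'$-invariance, I would use that $h'\in H'$ fixes the base point $eH'$ and acts on the fiber there by $\tau'(h')$. Combining this with $G'$-equivariance of $T$ gives $\xi_{T}(h'\cdot f)=\tau'(h')\cdot \xi_{T}(f)$, which is exactly the invariance condition for the product representation of $H'$ on ${\mathcal D}'(G/H,\tau^{\vee}\otimes{\mathbb C}_{2\rho})\otimes V_{\tau'}$. For injectivity, if $\xi_{T}=0$ then $T(f)(eH')=0$ for every $f$; replacing $f$ by $g'\cdot f$ for arbitrary $g'\in G'$ and applying $G'$-equivariance of $T$ shows that $T(f)$ vanishes at every coset $g'H'$, so $T=0$.

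For part (2), when $H$ is cocompact, I would construct the inverse explicitly by setting
\[
T_{\xi}(f)(g'):=\xi(g'^{-1}\cdot f)\qquad \text{for }\xi\in({\mathcal D}'(G/H,\tau^{\vee}\otimes{\mathbb C}_{2\rho})\otimes V_{\tau'})^{H'}.
\]
The $H'$-invariance of $\xi$ translates, by a direct computation, into the correct transformation law for $T_{\xi}(f)$ under right multiplication by $H'$, so $T_{\xi}(f)$ descends to a section over $G'/H'$; the $G'$-equivariance of the assignment $f\mapsto T_{\xi}(f)$ is immediate from the definition. Composing with the map of part (1) clearly returns $\xi$, so the inverse is genuine once one checks the above is well-defined and takes values in $C^{\infty}$.

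The main obstacle is the last point: verifying that $T_{\xi}(f)$ is genuinely smooth in $g'$, which requires the orbit map $g'\mapsto g'^{-1}\cdot f$ to be smooth from $G'$ into the Fr\'echet space $C^{\infty}(G/H,\tau)$, and then using continuity of $\xi$. This is exactly where the hypothesis $G/H$ compact is used: then $C^{\infty}(G/H,\tau)$ is a nuclear Fr\'echet space on which the $G$-action (hence the $G'$-action) is smooth in the Fr\'echet sense, and the pairing $\xi(g'^{-1}\cdot f)$ is unambiguously defined for arbitrary distributions $\xi$, since all smooth sections are compactly supported. Without cocompactness, a general distribution pairs only with compactly supported smooth sections, and the image of the injection in (1) lies in a proper subspace of compactly supported invariant distributions; this is the crucial delicate point to pin down carefully, and it is also what makes the bijectivity statement genuinely dependent on the cocompactness hypothesis rather than being a formal consequence of (1).
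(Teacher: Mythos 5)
The paper does not prove this statement at all: it is imported verbatim as Fact \ref{GPGP} with the citation \cite[Prop.\:3.2]{KS}, so there is no internal proof to compare against. Your argument is the standard one (evaluation at the base point $eH'$, the $G$-invariant pairing identifying the strong dual of $C^{\infty}(G/H,\tau)$ with compactly supported distributional sections of $\tau^{\vee}\otimes{\mathbb C}_{2\rho}$, and the inverse $T_{\xi}(f)(g')=\xi(g'^{-1}\cdot f)$ in the cocompact case), which is essentially the proof given in the cited reference and is correct; you also correctly locate where cocompactness enters. The only points left implicit are that injectivity into ${\mathcal D}'$ (as opposed to the full dual of $C^{\infty}$) uses the density of compactly supported sections, and that continuity of $T_{\xi}$ as a map between Fr\'echet spaces should be checked (e.g.\ via the closed graph theorem); both are routine.
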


\begin{proof}[Proof of Theorem \ref{Qfin}]
By Fact \ref{GPGP},
we have
\begin{align}
{\rm Hom}_{G}
(C^{\infty}(G/Q,\eta),
C^{\infty}(G/H,\tau))
&\simeq
(
{\mathcal D}'
(G/Q,\eta^{\vee}\otimes{\mathbb C}_{2\rho})\otimes \tau)^{H}
\label{DQ}.
\end{align}
Let $Q$ act on 
${\mathcal D}'(G)$
from the right.
Regarding
$
{\mathcal D}'(G)\otimes
(\eta^{\vee}\otimes{\mathbb C}_{2\rho})
$
as a tensor representation of $Q$,
we have
\begin{align*}
{\mathcal D}'
(G/Q,\eta^{\vee}\otimes{\mathbb C}_{2\rho})
\simeq
\left({\mathcal D}'(G)\otimes
(\eta^{\vee}\otimes{\mathbb C}_{2\rho})\right)^{Q}.
\end{align*}
Moreover,
letting
$H$
(resp.
$Q$)
act
on
$\eta^{\vee}\otimes{\mathbb C}_{2\rho}$
(resp, $\tau$)
trivially,
we have
\begin{align*}
(\ref{DQ})
&\simeq
\left(
\left({\mathcal D}'(G)\otimes
(\eta^{\vee}\otimes{\mathbb C}_{2\rho})\right)^{Q}
\otimes \tau
\right)^{H}\\
&\simeq
\left({\mathcal D}'(G)\otimes
(\eta^{\vee}\otimes{\mathbb C}_{2\rho})
\otimes \tau
\right)^{H\times Q}\\
&\subset
\left({\mathcal B}_{G}(G)\otimes
(\eta^{\vee}\otimes{\mathbb C}_{2\rho})
\otimes \tau
\right)^{H\times Q}.
\end{align*}
The last inclusion 
follows from the fact that
the space of Schwartz distributions
can be imbedded in
the space of Sato's hyperfunctions.
In order to apply Theorem \ref{Solfin-analytic},
we shall construct a relatively compact semianalytic open subset $U$ of $G$.
Let 
$G=KAN$ be the Iwasawa decomposition.
This implies
$G$ is diffeomorphic to $K\times A\times N\simeq K\times {\mathbb R}_{>0}^{k}\times {\mathbb R}^{j}$
for some $k,j\in{\mathbb N}$,
where $K$ is compact.
Define a relatively compact semianalytic open subset $U$ of $G$ by $U:=K\times (1,2)^{k}\times (1,2)^{j}\subset K\times {\mathbb R}_{>0}^{k}\times {\mathbb R}^{j}\simeq G$.
Because
$QU=G$,
we have (cf. \cite[Thm.\:3.16]{KS})
\begin{align}
\left({\mathcal B}_{G}(G)\otimes
(\eta^{\vee}\otimes{\mathbb C}_{2\rho})
\otimes \tau
\right)^{H\times Q}
\subset
\left({\mathcal B}_{G}(U)\otimes
(\eta^{\vee}\otimes{\mathbb C}_{2\rho})
\otimes \tau
\right)^{{\mathfrak h}\oplus {\mathfrak q}}.
\label{BBB}
\end{align}
Therefore,
Theorem \ref{Qfin} follows from
Theorem \ref{Solfin-analytic}.
\end{proof}

\section{An alternative approach of (II)$\Rightarrow$(I) in Fact \ref{KOB}}
\label{APB}
\begin{proof}[Proof of (II)$\Rightarrow$(I) in Fact \ref{KOB}]
Let $\pi^{\vee}$ be the contragredient representation of $\pi\in\hat{G}_{\rm smooth}$
in the category of admissible smooth representations
with moderate growth.
By
Casselman's
subrepresentation theorem \cite{Casselman},
there exists an injection
$\pi^{\vee}\hookrightarrow C^{\infty}(G/P,\eta)$
for some $\eta\in \hat{P}_{\rm f}$.
Then 
$\pi$
is 
isomorphic to an irreducible quotient of
$
C^{\infty}(G/P,
\eta^{\vee}\otimes{\mathbb C}_{2\rho})$
because 
there 
exists 
a natural $G$-invariant pairing
$C^{\infty}(G/P,\eta)\times
C^{\infty}(G/P,
\eta^{\vee}\otimes{\mathbb C}_{2\rho})
\to
{\mathbb C}
$.
Moreover by Fact \ref{GPGP},
we have
\begin{alignat*}{1}
{\rm Hom}_{G}\left(C^{\infty}(G/P,\eta^{\vee}\otimes{\mathbb C}_{2\rho}),C^{\infty}(G/H,\tau)\right)
&\simeq
\left({\mathcal D}'(G/P,\eta)\otimes \tau\right)^{H}\\
&\subset
\left({\mathcal D}'(G/P_{0},\eta|_{P_{0}})\otimes \tau\right)^{H}\\
&\subset
\left({\mathcal D}'(G/P_{0},\eta|_{P_{0}})\otimes \tau\right)^{\mathfrak h},
\end{alignat*}
where
$P_{0}$ is 
the identity component of $P$.
We note that
any irreducible finite-dimensional representation of $P$
is the direct sum of
at most $\#(P/P_{0})$ irreducible representations of $P_{0}$.
Then,
it is sufficient to prove 
\begin{align}
\sup_{\tau\in\hat{H}_{\rm f}}
\sup_{\eta\in\hat{P}_{0,{\rm f}}}
\frac{1}{\dim \tau}
\dim
\left({\mathcal D}'(G/P_{0},\eta)\otimes \tau\right)^{\mathfrak h}<\infty.
\label{supD}
\end{align}
As in the original proof \cite{KO},
we use the Borel--Weil theorem to deal with the finite-dimensional representation $\eta$.
Let
$P=MAN$ be the Langlands decomposition of $P$.
Take a maximal abelian subspace ${\mathfrak t}$ of
${\mathfrak m}$ and
write ${\mathfrak t}_{\mathbb C}, {\mathfrak m}_{\mathbb C}$
for the complexifications of ${\mathfrak t}, {\mathfrak m}$, respectively.
We define some positivity on the root system of $({\mathfrak m}_{\mathbb C}, {\mathfrak t}_{\mathbb C})$
and write ${\mathfrak n}_{\mathfrak m}$ for the direct sum of positive root spaces of ${\mathfrak m}_{\mathbb C}$
relative to ${\mathfrak t}_{\mathbb C}$. 
Then, by the Borel--Weil theorem,
there exists $\lambda\in{{\mathfrak a}^{*}+\sqrt{-1}{\mathfrak t}}^{*}$ such that
$\eta\in\hat{P}_{0, {\rm f}}$ is isomorphic to $C^{\infty}(P_{0}/TAN, {\mathbb C}_{\lambda})^{{\mathfrak n}_{\mathfrak m}}$
as a $P_{0}$-representation.
Here 
${\mathbb C}_{\lambda}:=(\chi_{\lambda},{\mathbb C})$
is a one-dimensional representation of 
$TAN$
defined by
\begin{align*}
\chi_{\lambda}(e^{T+H}n):=e^{\lambda(T+H)}\quad\quad {\rm for}\:
T\in {\mathfrak t},
H\in{\mathfrak a},
n\in N
\end{align*}
and
${\mathfrak n}_{\mathfrak m}$ acts on $C^{\infty}(P_{0}/TAN, {\mathbb C}_{\lambda})$ from the right
by an isomorphism $P_{0}/TAN\simeq M_{0}/T\simeq M_{0,{\mathbb C}}/T_{\mathbb C}N_{\mathfrak m}$.
Therefore, we have
\begin{align}
\left({\mathcal D}'(G/P_{0},\eta)\otimes \tau\right)^{\mathfrak h}
&\simeq
\left({\mathcal D}'(G/P_{0},C^{\infty}(P_{0}/TAN, {\mathbb C}_{\lambda})^{{\mathfrak n}_{\mathfrak m}})\otimes \tau\right)^{\mathfrak h}\nonumber\\
&\subset
\left({\mathcal D}'(G/TAN, {\mathbb C}_{\lambda})^{{\mathfrak n}_{\mathfrak m}}\otimes \tau\right)^{\mathfrak h}.
\label{D}
\end{align}
The last inclusion is the composition of an isomorphism (see \cite{KP}, for example)
\begin{align}
{\mathcal D}'(G/P_{0},C^{\infty}(P_{0}/TAN, {\mathbb C}_{\lambda})^{{\mathfrak n}_{\mathfrak m}})
\simeq
({\mathcal D}'(G)\otimes C^{\infty}(P_{0}/TAN, {\mathbb C}_{\lambda})^{{\mathfrak n}_{\mathfrak m}})^{P_{0}}
\label{P0}
\end{align}
and an injection
\begin{align}
({\mathcal D}'(G)\otimes C^{\infty}(P_{0}/TAN, {\mathbb C}_{\lambda})^{{\mathfrak n}_{\mathfrak m}})^{P_{0}}&
\hookrightarrow
{\mathcal D}'(G/TAN, {\mathbb C}_{\lambda})^{{\mathfrak n}_{\mathfrak m}},\nonumber\\
\sum F(\cdot )\otimes f(\cdot)&
\mapsto 
\sum F(\cdot)\otimes f(e).
\label{inj}
\end{align}
Here, 
$P_{0}$ acts on ${\mathcal D}'(G)$ and 
$C^{\infty}(P_{0}/TAN, {\mathbb C}_{\lambda})^{{\mathfrak n}_{\mathfrak m}}$
from the right and the left, respectively, 
and
$e\in G$ denotes the identity element.
Let ${\mathfrak n}_{\mathfrak m}$
act on
${\mathbb C}_{\lambda}$
trivially.
Then, similarly to 
(\ref{P0}),
we have
\begin{align*}
{\mathcal D}'(G/TAN, {\mathbb C}_{\lambda})^{{\mathfrak n}_{\mathfrak m}}
\simeq
\left({\mathcal D}'(G/N)\otimes {\mathbb C}_{\lambda}\right)
^{{\mathfrak n}_{\mathfrak m}+{\mathfrak a}+{\mathfrak t}}.
\end{align*}
Let
${\mathfrak n}_{\mathfrak m}+{\mathfrak a}+{\mathfrak t}$
(resp.
${\mathfrak h}$)
act
on
$\tau$
(resp. ${\mathbb C}_{\lambda}$)
trivially.
Then we have
\begin{align}
(\ref{D})&\simeq
\left(\left({\mathcal D}'(G/N)\otimes {\mathbb C}_{\lambda}\right)
^{{\mathfrak n}_{\mathfrak m}+{\mathfrak a}+{\mathfrak t}}\otimes \tau\right)^{{\mathfrak h}}\nonumber\\
&\simeq
\left({\mathcal D}'(G/N)\otimes {\mathbb C}_{\lambda}\otimes\tau\right)
^{{\mathfrak h}\oplus({\mathfrak n}_{\mathfrak m}+{\mathfrak a}+{\mathfrak t})}
\nonumber
\\
&
\subset
\left({\mathcal B}_{G/N}(G/N)\otimes {\mathbb C}_{\lambda}\otimes\tau\right)
^{{\mathfrak h}\oplus({\mathfrak n}_{\mathfrak m}+{\mathfrak a}+{\mathfrak t})}.
\label{DB}
\end{align}
In order to apply Theorem \ref{Solfin-analytic},
we shall construct a relatively compact semianalytic open subset $U$ of $G$.
Let $K$ be a maximal compact subgroup $G$,
then
$G/N$ is diffeomorphic to $K\times A\simeq K\times {\mathbb R}_{> 0}^{k}$ for some $k\in{\mathbb N}$
by the Iwasawa decomposition.
We define a relatively compact open semianalytic set $U$ of $G/N$ by $U:=K\times (1,2)^{k}\subset K\times {\mathbb R}_{>0}^{k}\simeq G/N$.
Then we have
\begin{align}
\left({\mathcal B}_{G/N}(G/N)\otimes {\mathbb C}_{\lambda}\otimes\tau\right)
^{{\mathfrak h}\oplus({\mathfrak n}_{\mathfrak m}+{\mathfrak a}+{\mathfrak t})}
\simeq
\left({\mathcal B}_{G/N}(U)\otimes {\mathbb C}_{\lambda}\otimes\tau\right)
^{{\mathfrak h}\oplus({\mathfrak n}_{\mathfrak m}+{\mathfrak a}+{\mathfrak t})}
\label{BB}
\end{align}
in the same way as \eqref{BBB}.
Moreover,
the assumption $\#(H_{\mathbb C}\backslash G_{\mathbb C}/B)<\infty$
implies
$\#((H_{\mathbb C}\times A_{\mathbb C}T_{\mathbb C}N_{\mathfrak m})\backslash G_{\mathbb C}/N_{\mathbb C})<\infty$.
Therefore,
Theorem \ref{Solfin-analytic} implies that
there exists $C>0$,
which is independent on $\tau\in \hat{H}_{\rm f}$ and $\lambda\in{\mathfrak a}^{*}+\sqrt{-1}{\mathfrak t}^{*}$,
such that
\begin{align*}
\dim
\left({\mathcal B}_{G/N}(U)\otimes {\mathbb C}_{\lambda}\otimes\tau\right)
^{{\mathfrak h}\oplus({\mathfrak n}_{\mathfrak m}+{\mathfrak a}+{\mathfrak t})}
\leq 
C\cdot \dim\tau.
\end{align*}
This completes the proof.
\end{proof}

\appendix
\section{Appendix}
\label{Reg-mult}
In this section,
we prove the remaining assertion of Corollary \ref{CorRC}.
For this purpose,
we use the theory of regular holonomic ${\mathcal D}_{X}$-modules.
We recall some facts.
See \cite[Thms.\:5.5.21 and 22]{Bjork}
or \cite[Thm.\:6.1.3]{KK-Hol-III}.

\begin{Fact}
\label{fact-reg}
For any holonomic ${\mathcal D}_{X}$-module ${\mathfrak M}$,
there exists 
a regular holonomic ${\mathcal D}_{X}$-module ${\mathfrak M}_{\rm reg}$
such that 
\begin{enumerate}
	\item ${\mathcal D}_{X}^{\infty}\otimes_{{\mathcal D}_{X}}{\mathfrak M} \simeq {\mathcal D}_{X}^{\infty}\otimes_{{\mathcal D}_{X}}{\mathfrak M} _{\rm reg}$,
	\item ${\mathcal Hom}_{{\mathcal D}_{X}}({\mathfrak N},{\mathcal D}_{X}^{\infty}\otimes_{{\mathcal D}_{X}}{\mathfrak M})\simeq {\mathcal Hom}_{{\mathcal D}_{X}}({\mathfrak N},{\mathfrak M}_{\rm reg})$ for any reglar holonomic ${\mathcal D}_{X}$-module ${\mathfrak N}$. 
\end{enumerate}
\end{Fact}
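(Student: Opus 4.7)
The plan is to deduce both assertions from the Riemann--Hilbert correspondence of Kashiwara--Mebkhout together with its ${\mathcal D}_{X}^{\infty}$-version due to Kashiwara--Kawai. First I would form the solution complex $\mathrm{Sol}({\mathfrak M}) := {\mathbb R}{\mathcal Hom}_{{\mathcal D}_{X}}({\mathfrak M},{\mathcal O}_{X})$. By Kashiwara's constructibility theorem (cf.\ Fact \ref{holonomic}) this is ${\mathbb C}$-constructible, and in fact perverse because ${\mathfrak M}$ is holonomic. The Kashiwara--Mebkhout equivalence $\mathrm{Sol}\colon \mathrm{Hol}_{\mathrm{reg}}({\mathcal D}_{X}) \xrightarrow{\sim} \mathrm{Perv}({\mathbb C}_{X})$ then furnishes a regular holonomic ${\mathcal D}_{X}$-module ${\mathfrak M}_{\mathrm{reg}}$, unique up to isomorphism, with $\mathrm{Sol}({\mathfrak M}_{\mathrm{reg}}) \simeq \mathrm{Sol}({\mathfrak M})$. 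This is my candidate.

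To establish (1), I would invoke the analogous equivalence at the level of ${\mathcal D}_{X}^{\infty}$-modules: Kashiwara--Kawai proved that the functor $\mathrm{Sol}^{\infty}(-) := {\mathbb R}{\mathcal Hom}_{{\mathcal D}_{X}^{\infty}}(-,{\mathcal O}_{X})$ is fully faithful on the essential image of ${\mathcal D}_{X}^{\infty}\otimes_{{\mathcal D}_{X}}(-)$ applied to holonomic ${\mathcal D}_{X}$-modules. Extension-of-scalars adjunction provides the identification
\begin{equation*}
\mathrm{Sol}^{\infty}({\mathcal D}_{X}^{\infty}\otimes_{{\mathcal D}_{X}}{\mathfrak N}) \simeq \mathrm{Sol}({\mathfrak N})
\end{equation*}
for any ${\mathcal D}_{X}$-module ${\mathfrak N}$. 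Applied separately to ${\mathfrak M}$ and to ${\mathfrak M}_{\mathrm{reg}}$, the two ${\mathcal D}_{X}^{\infty}$-modules ${\mathcal D}_{X}^{\infty}\otimes_{{\mathcal D}_{X}}{\mathfrak M}$ and ${\mathcal D}_{X}^{\infty}\otimes_{{\mathcal D}_{X}}{\mathfrak M}_{\mathrm{reg}}$ have the same image $\mathrm{Sol}({\mathfrak M})$ under $\mathrm{Sol}^{\infty}$, and full faithfulness promotes the identification back to an isomorphism of ${\mathcal D}_{X}^{\infty}$-modules, yielding (1).

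For (2), the restriction--extension adjunction gives, for any ${\mathcal D}_{X}^{\infty}$-module $P$,
\begin{equation*}
{\mathcal Hom}_{{\mathcal D}_{X}}({\mathfrak N},P) \simeq {\mathcal Hom}_{{\mathcal D}_{X}^{\infty}}({\mathcal D}_{X}^{\infty}\otimes_{{\mathcal D}_{X}}{\mathfrak N}, P).
\end{equation*}
Taking $P := {\mathcal D}_{X}^{\infty}\otimes_{{\mathcal D}_{X}}{\mathfrak M}$ and using (1) to replace it by ${\mathcal D}_{X}^{\infty}\otimes_{{\mathcal D}_{X}}{\mathfrak M}_{\mathrm{reg}}$, the left-hand side of (2) becomes ${\mathcal Hom}_{{\mathcal D}_{X}^{\infty}}({\mathcal D}_{X}^{\infty}\otimes_{{\mathcal D}_{X}}{\mathfrak N}, {\mathcal D}_{X}^{\infty}\otimes_{{\mathcal D}_{X}}{\mathfrak M}_{\mathrm{reg}})$. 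Since both ${\mathfrak N}$ and ${\mathfrak M}_{\mathrm{reg}}$ are regular holonomic, the two Riemann--Hilbert equivalences agree on the common target $\mathrm{Perv}({\mathbb C}_{X})$, so this group coincides with ${\mathcal Hom}_{\mathrm{Perv}}(\mathrm{Sol}({\mathfrak M}_{\mathrm{reg}}), \mathrm{Sol}({\mathfrak N}))$, which is in turn isomorphic to ${\mathcal Hom}_{{\mathcal D}_{X}}({\mathfrak N},{\mathfrak M}_{\mathrm{reg}})$ by the Kashiwara--Mebkhout equivalence. This is (2).

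The main obstacle is the full faithfulness of $\mathrm{Sol}^{\infty}$ on the image of ${\mathcal D}_{X}^{\infty}\otimes_{{\mathcal D}_{X}}(-)$, together with the compatibility of the ${\mathcal D}_{X}$- and ${\mathcal D}_{X}^{\infty}$-versions of Riemann--Hilbert through the common solution functor; this is the substantive input and I would simply quote it from \cite{KK-Hol-III} rather than reprove it.
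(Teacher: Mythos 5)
Your argument is sound, but you should know that the paper offers no proof of this statement at all: it is labelled a \emph{Fact} and quoted directly from \cite[Thms.\ 5.5.21 and 22]{Bjork} and \cite[Thm.\ 6.1.3]{KK-Hol-III}, where the content is packaged as the assertion that ${\mathcal D}_{X}^{\infty}\otimes_{{\mathcal D}_{X}}(\cdot)$ is an equivalence from the category of regular holonomic ${\mathcal D}_{X}$-modules to that of holonomic ${\mathcal D}_{X}^{\infty}$-modules. Granting that equivalence, both assertions are immediate with no mention of perverse sheaves: take ${\mathfrak M}_{\rm reg}$ to be the preimage of ${\mathcal D}_{X}^{\infty}\otimes_{{\mathcal D}_{X}}{\mathfrak M}$ under the equivalence, so that (1) is the counit isomorphism, and (2) follows from the extension-of-scalars adjunction ${\mathcal Hom}_{{\mathcal D}_{X}}({\mathfrak N},P)\simeq{\mathcal Hom}_{{\mathcal D}_{X}^{\infty}}({\mathcal D}_{X}^{\infty}\otimes_{{\mathcal D}_{X}}{\mathfrak N},P)$ together with the (sheaf-level) full faithfulness of the equivalence. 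Your route instead constructs ${\mathfrak M}_{\rm reg}$ through the Riemann--Hilbert correspondence and recovers (1) from the reconstruction theorem for holonomic ${\mathcal D}_{X}^{\infty}$-modules; this is logically correct and rests on the same source \cite{KK-Hol-III}, but the passage through perverse sheaves is a detour, and the object you write as a sheaf of Hom's in the perverse category should more carefully be $H^{0}{\mathbb R}{\mathcal Hom}_{{\mathbb C}_{X}}(\cdot,\cdot)$, since the statement to be proved concerns sheaf-Hom rather than global Hom in the abelian category of perverse sheaves. In short: what you have written is a valid re-derivation of a quoted fact from theorems that are at least as deep as the fact itself, whereas the paper simply cites it.
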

Note that we have
${\rm mult}_{T^{*}_{X_{\alpha}}X}^{{\mathcal D}_{X}}({\mathfrak M})={\rm mult}_{T^{*}_{X_{\alpha}}X}^{{\mathcal D}_{X}}({\mathfrak M}_{\rm reg})$
by Remark \ref{mult-infty}.
Moreover,
Fact \ref{fact-reg} implies
\begin{eqnarray*}
	{\mathcal Hom}_{{\mathcal D}_{X}}({\mathfrak M}_{\rm reg},{\mathcal B}_{X_{\alpha}|X})
	&\simeq &
	{\mathcal Hom}_{{\mathcal D}_{X}}({\mathfrak M}_{\rm reg},
	{\mathcal B}_{X_{\alpha}|X}^{f}),
\end{eqnarray*}
where ${\mathcal B}_{X_{\alpha}|X}^{f}:=({\mathcal B}_{X_{\alpha}|X})_{\rm reg}$
because
${\mathcal B}_{X_{\alpha}|X}\simeq 
{\mathcal D}_{X}^{\infty}\otimes_{{\mathcal D}_{X}}{\mathcal B}_{X_{\alpha}|X}^{f}$ (see \cite[Thm.\:5.4.1]{KK-Hol-III}, for example).
Therefore,
it is sufficient to show
$$
\dim {\mathcal Hom}_{{\mathcal D}_{X}}({\mathfrak M}_{\rm reg},
	{\mathcal B}_{X_{\alpha}|X}^{f})_{x}
	\leq 
	{\rm mult}_{T^{*}_{X_{\alpha}}X}^{{\mathcal D}_{X}}({\mathfrak M}_{\rm reg}),
$$
for any $x\in X_{\alpha}$
in order to prove Corollary \ref{CorRC}.
\begin{proof}[Proof of Corollary \ref{CorRC}]
Note that the regular holonomic 
${\mathcal D}_{X}$-module
${\mathcal B}_{X_{\alpha}|X}^{f}$
satisfies
${\rm mult}_{T^{*}_{X_{\alpha}}X}^{{\mathcal D}_{X}}({\mathcal B}_{X_{\alpha}|X}^{f})=1$.
Let $x\in X_{\alpha}$ and
$
	f\in {\mathcal Hom}_{{\mathcal D}_{X}}({\mathfrak M}_{\rm reg},{\mathcal B}_{X_{\alpha}|X}^{f})_{x}
$
with $f\neq 0$.
Take an open neighborhood $U$ of $x$
such that 
$f$ is defined over $U$.
Consider an exact sequence on $U$
\begin{eqnarray}
	0 \to \operatorname{Ker}f
	\to 
	{\mathfrak M}_{\rm reg}
	\to
	{\mathcal B}_{X_{\alpha}|X}^{f}
	\to 0,
	\label{ex-KMB}
\end{eqnarray}
where the exactness at ${\mathcal B}_{X_{\alpha}|X}^{f}$ follows from ${\rm mult}_{T^{*}_{X_{\alpha}}X}^{{\mathcal D}_{X}}({\mathcal B}_{X_{\alpha}|X}^{f})=1$.
Then,
$\operatorname{Ker}f$ is regular holonomic
\cite[Prop.\:1.1.17]{KK-Hol-III}.
Moreover, additivity of ${\rm mult}_{T^{*}_{X_{\alpha}}X}^{{\mathcal D}_{X}}$
with respect to exact sequences of holonomic ${\mathcal D}_{X}$-modules \cite[Prop.\:2.6.15]{K}
implies 
$$
{\rm mult}_{T^{*}_{X_{\alpha}}X}^{{\mathcal D}_{X}}(\operatorname{Ker}f)=
{\rm mult}_{T^{*}_{X_{\alpha}}X}^{{\mathcal D}_{X}}({\mathfrak M}_{\rm reg})
-
1.
$$
Applying the left exact functor
${\mathcal Hom}_{{\mathcal D}_{X}}
(*,
{\mathcal B}_{X_{\alpha}|X}^{f})_{x}$ to \eqref{ex-KMB},
we have
\begin{eqnarray*}
	\dim {\mathcal Hom}_{{\mathcal D}_{X}}
({\mathfrak M}_{\rm reg},
{\mathcal B}_{X_{\alpha}|X}^{f})_{x}
\leq 
1 +
\dim {\mathcal Hom}_{{\mathcal D}_{X}}
(\operatorname{Ker}f,
{\mathcal B}_{X_{\alpha}|X}^{f})_{x} 
\end{eqnarray*}
by 
$\dim {\mathcal Hom}_{{\mathcal D}_{X}}
({\mathcal B}_{X_{\alpha}|X}^{f},
{\mathcal B}_{X_{\alpha}|X}^{f})_{x} =1$.
Repeating this argument by taking ${\mathfrak M}=\operatorname{Ker}f$,
we have the corollary
because 
we have
$\dim {\mathcal Hom}_{{\mathcal D}_{X}}
({\mathfrak M}_{\rm reg},
{\mathcal B}_{X_{\alpha}|X}^{f})_{x} =0$
if 
${\rm mult}_{T^{*}_{X_{\alpha}}X}^{{\mathcal D}_{X}}({\mathfrak M}_{\rm reg})=0$.
\end{proof}

\end{document}